\documentclass[11pt]{amsart}

\usepackage[margin=1.05in]{geometry}
\usepackage{amsmath,amssymb,amsthm,mathtools,mathrsfs}
\usepackage[T1]{fontenc}
\usepackage[utf8]{inputenc}
\usepackage{lmodern}
\usepackage{enumitem}
\usepackage{microtype}
\usepackage{hyperref}
\usepackage{xcolor}

\hypersetup{colorlinks=true,linkcolor=blue,citecolor=blue,urlcolor=blue}

\newtheorem{theorem}{Theorem}[section]
\newtheorem{proposition}[theorem]{Proposition}
\newtheorem{lemma}[theorem]{Lemma}
\newtheorem{corollary}[theorem]{Corollary}
\theoremstyle{remark}
\newtheorem{remark}[theorem]{Remark}

\newcommand{\C}{\mathbb C}
\newcommand{\om}{\Omega}
\newcommand{\Ric}{\operatorname{Ric}}
\newcommand{\Scal}{\operatorname{Scal}}
\newcommand{\tr}{\operatorname{tr}}
\newcommand{\ddbar}{\sqrt{-1}\,\partial\bar\partial}
\newcommand{\cB}{\mathcal B}
\newcommand{\gB}{g_B}
\newcommand{\Cn}{C_n}
\newcommand{\Ophg}{\mathcal O_{\mathrm{phg}}}
\newcommand{\Aphg}{\mathcal A_{\mathrm{phg}}}
\newcommand{\dotC}{\dot C^\infty}
\newcommand{\VTheta}{\mathcal V_\Theta}

\title[Extremal and cscK Bergman Metrics]{Rigidity of Extremal and cscK Bergman Metrics on Pseudoconvex Domains}
\author{Peter Ebenfelt}
\address{Department of Mathematics, University of California San Diego,
9500 Gilman Drive, La Jolla, CA 92093, USA}
\email{pebenfelt@ucsd.edu}

\author{Soumya Ganguly}
\address{Department of Mathematics, Rutgers University--New Brunswick,
110 Frelinghuysen Road, Piscataway, NJ 08854, USA}
\email{soumya.ganguly@rutgers.edu}

\thanks{The first author was supported in part by the NSF grant DMS-2453134.}

\subjclass[2020]{Primary 32F45; Secondary 32A25, 32Q20, 53B35}
\keywords{Bergman metric, extremal K\"ahler metric, constant scalar curvature,
K\"ahler--Einstein metric, strongly pseudoconvex boundary, ACH metric,
unique continuation}
\date{}

\begin{document}
\begin{abstract}
Let $\Omega\subset\mathbb C^n$, $n\ge2$, be a bounded connected
pseudoconvex domain whose boundary contains a smooth strongly
pseudoconvex point. We prove that if the Bergman metric of $\Omega$ is
extremal, then its scalar curvature is identically $-n$, and that constant
scalar curvature forces the Bergman metric to be K\"ahler--Einstein.
Consequently, extremality, constant scalar curvature, and the
K\"ahler--Einstein condition are equivalent in this setting. A key analytic
ingredient is a local unique-continuation theorem for the Bergman
Laplacian at an ACH boundary, proved using  H\"ormander's Carleman
estimate. We also obtain an extension for possibly
unbounded pseudoconvex domains: if the Bergman metric is cscK or
extremal near such a boundary point, then it is well-defined and
K\"ahler--Einstein throughout the locus where $K_\Omega(z,z)>0$.
\end{abstract}

\maketitle

\section{Introduction}\label{sec:introduction}

Let $\om$ be a bounded pseudoconvex domain in $\C^n$; throughout this paper, we shall assume that $n\geq 2$. Associated with $\om$ is its Bergman metric, a central biholomorphically invariant K\"ahler metric with remarkable rigidity properties. A significant body of work has investigated how curvature properties of the Bergman metric constrain the underlying domain. A prominent example is the Cheng-Yau Conjecture, which asserts that a smoothly bounded pseudoconvex domain whose Bergman metric is K\"ahler--Einstein must be biholomorphic to the unit ball. The Cheng-Yau Conjecture is not the focus of this paper, although substantial progress has been made recently; see the works \cite{fu_wong_chengconjecture2d,NS06,HuangXiaoChengallD,HHL2026,SavaleXiao2025, HuangLi2023KEEisBergStein, EXX2022ChengSteinabelian, GangulySinha2026ChengStein2d, gangulysahi2026classificationsteinspacesbergmaneinstein} and references therein. Our focus instead is on rigidity among curvature conditions for the Bergman metric itself. We consider three successively weaker conditions on a K\"ahler metric: being K\"ahler--Einstein, having constant scalar curvature (cscK), and being extremal in the sense of Calabi. Recall that a K\"ahler metric is extremal if the $(1,0)$-gradient of its scalar curvature is holomorphic. Thus, we have
\[
\text{K\"ahler--Einstein}\implies\text{cscK} \implies \text{extremal},
\]
and both implications are strict for general K\"ahler metrics. Our main result shows that, for the Bergman metric, these three conditions are equivalent, provided that the boundary of $\om$ contains a smooth strongly pseudoconvex point. A notable previous result in this direction is due to Sha, who proved that a cscK Bergman metric on a smoothly bounded strongly
pseudoconvex domain is K\"ahler--Einstein \cite[Proposition~5.8]{Sha-cscK2025}.

%C-Y history
%In the case where $\om$ is strongly pseudoconvex, this conjecture has been established by Fu--Wong and Nemirovski--Shafikov \cite{fu_wong_chengconjecture2d,NS06} in the case $n=2$ and by Huang--Xiao \cite{HuangXiaoChengallD} for $n\geq 3$. More recently, Hsiao--Huang--Li \cite{HHL2026} proved the Cheng--Yau Conjecture for bounded pseudoconvex domains with real-analytic boundary; we also mention the work of Savale--Xiao \cite{SavaleXiao2025} in $\C^2$

%In our main result, we shall assume, in addition to $\om\subset \C^n$ being a bounded pseudoconvex domain, that the boundary $\partial\om$ is \(C^\infty\)-smooth and strongly pseudoconvex locally near one boundary point and prove that, for the Bergman metric of $\om$, the implications above are equivalences.

\begin{theorem}\label{cor:equivalence}
Let $\om\subset\C^n$, $n\ge2$, be a bounded connected pseudoconvex
domain whose boundary contains a smooth strongly pseudoconvex point. Then the following are equivalent:
\begin{enumerate}[label=\textup{(\roman*)}]
\item $\gB$ is extremal;
\item $\gB$ has constant scalar curvature;
\item $\gB$ is K\"ahler--Einstein.
\end{enumerate}
\end{theorem}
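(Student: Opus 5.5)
The implications (iii)$\Rightarrow$(ii)$\Rightarrow$(i) hold for every K\"ahler metric, so the work is in (i)$\Rightarrow$(ii) and (ii)$\Rightarrow$(iii). Both directions rest on the boundary geometry at the smooth strongly pseudoconvex point $p\in\partial\om$. There Fefferman's expansion of the Bergman kernel, $K(z,z)=\phi\,\rho^{-(n+1)}+\psi\log(-\rho)$ with $\phi,\psi$ smooth up to the boundary near $p$ and $\phi|_{\partial\om}>0$, is valid by localization of the Bergman kernel (H\"ormander, Engli\v{s}). Consequently $\gB$ is asymptotically complex hyperbolic (ACH) near $p$, with curvature tending to that of the ball normalized to $\Ric=-g$. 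Its scalar curvature $S=\Scal(\gB)$ therefore satisfies $S\to -n$ at $\partial\om$ near $p$, with a polyhomogeneous expansion in $\rho$ and $\rho^{n+1}\log(-\rho)$ terms.

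\emph{Step 1: (i)$\Rightarrow$(ii).} Extremality means $X=\operatorname{grad}^{1,0}S$ is holomorphic, so $\operatorname{Re}X$ is a Killing field of $\gB$ (for Kähler metrics, holomorphic gradient fields have imaginary part Killing, and $JX$ is Killing). Equivalently, $S$ solves the fourth-order equation $\mathcal D^*\mathcal D S=0$ (Lichnerowicz operator), which in the extremal case can be written as $\bar\partial\big(g^{i\bar j}\partial_{\bar j}S\big)=0$. I plan to compute how a holomorphic vector field defined on $\om$ behaves at an ACH boundary. Near $p$ write $S=-n+u$, where $u$ vanishes at the boundary and is polyhomogeneous. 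The holomorphic field $g^{i\bar j}S_{\bar j}\partial_i$ has Euclidean coefficients of size $O(\rho^{2}\,|\nabla u|)$ in the normal direction, because $g^{-1}\sim\rho^2$ normally and $\sim\rho$ tangentially. Such a field extends continuously to the boundary and is tangent to it. A holomorphic field that is tangent to a strongly pseudoconvex hypersurface is an infinitesimal CR automorphism. Matching the leading terms of the expansion of $u$ against the indicial roots of the Lichnerowicz operator at the ACH end should force every coefficient of $u$ to vanish, i.e.\ $u$ vanishes to infinite order at $p$. The paper's unique continuation theorem for the Bergman Laplacian at an ACH boundary, applied to $\Delta_{\gB}$ acting on $u$ or on the coefficient functions of $X$ (which satisfy a second-order elliptic system coupled to $\Delta$), then gives $u\equiv0$ near $p$. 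By real analyticity of $\gB$ and connectedness, $S\equiv-n$ on all of $\om$.

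\emph{Step 2: (ii)$\Rightarrow$(iii).} Assume $S\equiv c$. Since $S\to-n$, we get $c=-n$. Set $F=\log\big(\det(g_{i\bar j})/K\big)$, which is a globally defined real-analytic function because the quotient is biholomorphically invariant. Then $\Ric(\gB)=-\gB-\ddbar F$, and taking traces gives $S=-n-\Delta_{\gB}F$. Hence $\Delta_{\gB}F=0$. Near $p$, Fefferman's expansion gives $F=F_0+O(\rho^{n+1}\log(-\rho))$, where $F_0$ is smooth up to the boundary and $F_0|_{\partial\om}$ is an explicit constant (the ball value). For a harmonic function of an ACH metric the indicial roots are $0$ and $n+1$. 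Plugging the polyhomogeneous expansion of $F-F_0|_{\partial\om}$ into $\Delta_{\gB}$ and solving order by order, using that the log-term coefficient is a local invariant and appears at a fixed order, should show that $F-\mathrm{const}$ vanishes to infinite order at $p$, or at least satisfies the decay hypothesis of the unique continuation theorem. Unique continuation then gives $F\equiv\mathrm{const}$ near $p$, hence on $\om$ by analyticity. Therefore $\Ric(\gB)=-\gB$.

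\emph{Main obstacle.} The delicate point is the infinite-order vanishing at $p$, in both steps. Formal expansions alone allow a free coefficient at the indicial root $n+1$, for example a harmonic function $\rho^{n+1}a+\dots$ with arbitrary boundary datum $a$. To rule it out I would use an extra Bergman-specific identity: the reproducing property, or the fact that the $\rho^{n+1}$ coefficient of $F$ is tied to the coefficient $\psi$ of the log term. The local vanishing then comes from combining the ACH harmonic-function expansion with the log-term structure, in the spirit of the argument behind \cite[Proposition~5.8]{Sha-cscK2025}. Once this is settled, the Carleman-based unique continuation theorem proved earlier in the paper finishes the argument.
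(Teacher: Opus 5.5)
There is a genuine gap. It sits exactly at the point you flag as the main obstacle in Step 2, and it is made worse by a miscount of the indicial roots.

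\textbf{Step 2.} For the complex Laplacian of $g_B$ one has $\Delta_{g_B}(r^\lambda a)=\frac{\lambda(\lambda-n)}{n+1}r^\lambda a+\dots$, as you can check on the ball with $\delta=1-|z|^2$. So the roots are $0$ and $n$, not $0$ and $n+1$. Harmonicity of $u=\log(\cB/\Cn)$ therefore kills only the terms below $r^n$ and the logarithmic terms at $r^n$. The free coefficient sits at $r^n$ and comes from the smooth part $\phi$ of the Fefferman expansion; the boundary value of $\psi$ first enters $u$ at order $r^{n+1}$. Relating the $r^{n+1}$ coefficient to $\psi$ thus addresses the wrong order, and ``the reproducing property'' is not an argument.

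The paper gets past the root with geometric input your proposal lacks.
\begin{itemize}
\item For $n=2$, Graham's boundary computations give $\phi=c_0+O(r^3)$ and a cancellation at order $r^3$ between $J(r)$ and the logarithmic perturbation of $v=K^{-1/3}$. Hence $u=\Ophg(r^4)$ directly, already past the root $2$.
\item For $n\ge3$, indicial analysis below the root gives $u=O(r^n)=o(r^2)$. Through the localization estimate this becomes $J(K_D^{-1/(n+1)})=\pi^n/n!+o(r^2)$ on the localized domain $D$. That forces the boundary patch to be CR umbilical, hence locally spherical since $n\ge3$. On a spherical patch the Cheng--Yau defining function $\rho_D$ is smooth and $K_D=\frac{n!}{\pi^n}\rho_D^{-n-1}+A$ with $A$ smooth, which yields $u=O(r^{n+2})$, past the root $n$.
\end{itemize}
Only after this does the indicial bootstrap give the infinite-order vanishing that the Carleman unique-continuation theorem requires. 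The rest of your Step 2 skeleton (harmonic reduction, indicial analysis, unique continuation, analytic continuation) is the paper's.

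\textbf{Step 1.} As written, Step 1 does not work either, though the fix is already in your hands. The claim about ``indicial roots of the Lichnerowicz operator'' is unsupported. Moreover, the unique continuation theorem is for $\Delta_{g_B}$, while $S+n$ is not $g_B$-harmonic under extremality alone: extremality says $S_{,\bar j\bar k}=0$, which is a fourth-order condition on $S$. (Also, it is $J\operatorname{grad}S$, i.e.\ $\operatorname{Im}X$ up to normalization, that is Killing, not $\operatorname{Re}X$.)

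What you miss is that your own estimate shows $X\to0$ at the boundary patch, not merely that $X$ is tangent there. With $g_B^{-1}$ of order $\rho$ tangentially and $\rho^2$ normally, and $dS$ bounded, the Euclidean components of $X$ are $O(\rho)$; the paper gets $\Ophg(r^3)$ using $S+n=\Ophg(r^2)$, but the weaker bound suffices. A holomorphic function on a one-sided neighbourhood whose boundary values vanish on an open piece of a $C^1$ hypersurface is identically zero: extend each one-variable slice by $0$ and check $\bar\partial=0$ in distributions. Hence $X\equiv0$ by the identity theorem, so $\bar\partial S=0$, $S$ is constant, and the boundary limit gives $S\equiv-n$. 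No indicial analysis or unique continuation is needed for (i)$\Rightarrow$(ii).
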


To establish Theorem \ref{cor:equivalence} we shall prove the implications (i) $\implies$  (ii) and (ii) $\implies$ (iii). This will be achieved in two separate steps as the mechanisms behind the two implications are different. We first establish the following:

\begin{theorem}\label{thm:extremal-csck}
Let $\om\subset\C^n$, $n\ge2$, be a bounded connected pseudoconvex
domain whose boundary contains a smooth strongly pseudoconvex point.  If the Bergman metric $\gB$ is extremal, then
\[
        \Scal(\gB)\equiv -n
\]
on $\om$.  In particular, $\gB$ has constant scalar curvature $=-n$.
\end{theorem}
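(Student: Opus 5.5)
Let $S=\Scal(\gB)$. The plan is to combine the boundary asymptotics of the Bergman metric at a strongly pseudoconvex point with the extremal condition, and then to propagate local information into the interior by unique continuation.

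First, localization. Fix a smooth strongly pseudoconvex boundary point $p$. By Fefferman's expansion of the Bergman kernel (valid after localization, since $\om$ is bounded and pseudoconvex), we have near $p$
\[
K_\om(z,z)=\frac{\phi(z)}{\rho(z)^{n+1}}+\psi(z)\log\rho(z),
\]
with $\phi,\psi$ smooth up to the boundary and $\rho$ a local defining function. Hence $\gB$ is asymptotically complex hyperbolic (ACH) near $p$. Its curvature tends to that of the ball model, so $S\to -n$ at $\partial\om$ near $p$, with a polyhomogeneous expansion in $\rho$ and $\rho\log\rho$.

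Second, we use the extremal condition. The vector field $X=\nabla^{1,0}S$ is holomorphic on $\om$. Equivalently, $S$ is a Killing potential: $\mathrm{Im}X$ is Killing for $\gB$, and the Lichnerowicz operator satisfies $\mathcal D^*\mathcal D S=0$, a fourth-order elliptic equation. Since $|\nabla S|_{\gB}\to 0$ near $p$ (by the asymptotics), the Killing field generated by $\mathrm{Im}X$ decays at the ACH boundary. I would then argue in one of two ways.

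\textbf{(a)} Holomorphic route. In ACH coordinates, $|X|_{\gB}^2\sim \rho^{-1}|X_{\mathrm{tan}}|^2+\rho^{-2}|X_{\mathrm{norm}}|^2$. Decay of $|X|_{\gB}$ forces the Euclidean components of $X$ to vanish on an open piece of $\partial\om$. A holomorphic function that is continuous up to the boundary and vanishes on an open subset of a real hypersurface vanishes identically. So $X\equiv0$ near $p$, hence on all of $\om$ by connectedness.

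\textbf{(b)} Analytic route, a fallback if the Euclidean estimates are delicate. Set $u=S+n$. From $\mathcal D^*\mathcal D u=0$, or from the Killing condition, obtain a second-order equation of the form $\Delta_{\gB}u=\text{(lower order in }u)$. Then $u$ vanishes to infinite order at the boundary in the polyhomogeneous sense. The local unique-continuation theorem for the Bergman Laplacian at an ACH boundary announced in the abstract then gives $u\equiv0$ near $p$.

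Either way $S$ is constant near $p$, and the limit forces the constant to be $-n$. Since $\gB$ is real-analytic, $S\equiv -n$ on the connected domain $\om$.

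The main obstacle is the boundary step: showing that the extremal condition forces $u$ (or $X$) to vanish to infinite order at $\partial\om$. This means checking that the log terms in Fefferman's expansion do not obstruct the argument, and that the Killing-potential structure makes successive coefficients of the expansion vanish. I would first attempt route (a). Concretely, I would compute $X^j=g^{j\bar k}\partial_{\bar k}S$ using $g^{j\bar k}=O(\rho)$ (tangential) and $O(\rho^2)$ (normal), and show that $X$ extends continuously to the boundary with zero boundary values near $p$. This should follow from $S=-n+O(\rho\log\rho)$ together with derivative bounds on the expansion. Boundary uniqueness for holomorphic functions then finishes the argument. This route avoids the Carleman machinery, which is then reserved for the cscK$\Rightarrow$KE step.
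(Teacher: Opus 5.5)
Your route (a) is essentially the paper's proof. The paper computes $\mathcal X=(\gB)^{A\bar B}\overline{L_B}f\,L_A$ in the adapted frame, using $(\gB)^{a\bar b}=\Ophg(r)$, $(\gB)^{a\bar n},(\gB)^{n\bar n}=\Ophg(r^2)$ and the polyhomogeneous derivative bounds on $f=\Scal(\gB)+n$, to show that the Euclidean components of $\mathcal X$ tend to zero on the patch. It then applies a boundary-uniqueness lemma for holomorphic functions and the identity theorem to get $\mathcal X\equiv0$ on $\om$, and uses $\bar\partial\Scal(\gB)=0$ together with the boundary limit to conclude $\Scal(\gB)\equiv-n$.

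Route (b) and infinite-order vanishing are not needed here, so you can drop that fallback. Moreover, its claimed reduction of the fourth-order Lichnerowicz equation to a second-order equation for $u$ is unjustified.
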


The second step is then to demonstrate:

\begin{theorem}\label{thm:csck-ke}
Let $\om\subset\C^n$, $n\ge2$, be a bounded connected pseudoconvex
domain whose boundary contains a smooth strongly pseudoconvex point. If the Bergman metric \(g_B\)
has constant scalar curvature, then
\[
        \Ric(\gB)=-\gB.
\]
In particular, the Bergman metric is K\"ahler--Einstein with Einstein constant
$=-1$.
\end{theorem}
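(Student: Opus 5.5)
The plan is to reduce the Einstein condition to a single scalar equation for the Bergman Laplacian, and then use boundary asymptotics together with unique continuation. Write $K=K_\om(z,z)$, so that $\gB=\ddbar\log K$. Write $G=\det(g_{B,j\bar k})$ and set
\[
f:=\log\frac{G}{K}.
\]
Since $\Ric(\gB)=-\ddbar\log G$, we have the identity
\[
\Ric(\gB)+\gB=-\ddbar f .
\]
Taking the trace with respect to $\gB$ gives $\Scal(\gB)+n=-\Delta_B f$, where $\Delta_B$ is the Bergman Laplacian. So the theorem is equivalent to showing that $f$ is constant. Moreover, if $\Scal(\gB)\equiv -n$, then $f$ is $\Delta_B$-harmonic.

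\textbf{Step 1: identify the constant.} Near the smooth strongly pseudoconvex point $p$, localization of the Bergman kernel (Fefferman's expansion) shows that $\gB$ is asymptotically complex hyperbolic. In particular its curvature tends to that of the ball, so $\Scal(\gB)\to -n$ as $z\to p$. Hence if $\Scal(\gB)$ is constant, the constant is $-n$, and $\Delta_B f=0$ on $\om$.

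\textbf{Step 2: boundary expansion of $f$.} Near $p$, write $K=\phi\,\rho^{-(n+1)}+\psi\log\rho$ with $\phi,\psi$ smooth up to the boundary and $\rho$ a local defining function. Fefferman's computation of the Monge--Amp\`ere determinant then gives
\[
f=c_n+O(\rho^{n+1}\log\rho),
\]
with $c_n=\log\bigl((n+1)^n\pi^n/n!\bigr)$, and the remainder has a polyhomogeneous expansion in $\rho$ and $\log\rho$. Now apply $\Delta_B$ term by term to $u:=f-c_n$, using the model operator
\[
\Delta_B\sim\rho^2\partial_\rho^2+\dots
\]
on an ACH end. Its indicial roots are $0$ and $n+1$. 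The leading $\rho^{n+1}$ coefficient of a harmonic function is free data, but a term $a\,\rho^{n+1}\log\rho$ is mapped by $\Delta_B$ to a nonzero multiple of $a\,\rho^{n+1}$ modulo higher order. So harmonicity forces the log coefficients at the indicial order to vanish. Inductively, I will try to show that every coefficient of $u$ vanishes, i.e.\ $u=O(\rho^\infty)$ near $p$.

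\textbf{Step 3: conclude.} Once $u=O(\rho^\infty)$ near $p$, the local unique-continuation theorem for the Bergman Laplacian at an ACH boundary (the Carleman-estimate result announced in the introduction) gives $u\equiv 0$ in a one-sided neighbourhood of $p$. Since $K$ and $G$ are real-analytic on $\om$, $f$ is real-analytic, and $\om$ is connected, so $f\equiv c_n$ on all of $\om$. The identity above then yields $\Ric(\gB)=-\gB$.

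The main obstacle is Step 2: proving infinite-order vanishing of $f-c_n$, and not merely vanishing to order $n+1$. The expansion of a harmonic function genuinely contains free data at the indicial order $n+1$. That free data is tied to the coefficient $\psi$ of the logarithmic term of the Bergman kernel, whereas the orders below $n+1$ are locally determined. If the formal argument cannot kill it, my fallback is a global argument. Since $u$ is bounded and harmonic, I would integrate $|\nabla u|^2$ against a cutoff near $p$ and use the ACH volume growth, together with the structure of $f$ as a log-ratio of kernels, to show that the free coefficient vanishes. The higher orders would then follow from the same indicial-root induction.
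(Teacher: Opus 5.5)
\paragraph{Overall assessment.} Your architecture matches the paper's: reduce to $\Delta_{\gB}u=0$ for $u=\log(\cB/\Cn)$, get infinite-order vanishing at the strongly pseudoconvex patch by indicial analysis, then apply Carleman unique continuation and real-analytic continuation. Steps~1 and~3 are fine. The gap is Step~2. It contains two errors and leaves the genuinely hard point unresolved.

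\paragraph{The two errors in Step~2.} First, the indicial roots are $0$ and $n$, not $0$ and $n+1$. On the ball, with $\delta=1-|z|^2$, one computes $\Delta_{\gB}\delta^\lambda=\frac{\lambda(\lambda-n)}{n+1}\delta^\lambda+O(\delta^{\lambda+1})$.

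Second, the expansion $f=c_n+O(\rho^{n+1}\log\rho)$ is false for $n\ge3$. With a Fefferman defining function one only has $\phi=\frac{n!}{\pi^n}+r^2P_2$. The variation formula $J(\chi+\eta\chi^3)=(1+3(n-1)\eta\chi^2)J(\chi)+O(\chi^3)$ turns this into a nonzero $r^2$-term of $u$ at every non-umbilical boundary point, so in general only $u=\Ophg(r^2)$ holds. For $n=2$ your expansion does hold (indeed $u=\Ophg(r^4)$). Combined with the correct root $2$, it would already finish that case. The misplaced root is what made order $n+1$ look like an obstruction there.

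\paragraph{The free coefficient at order $n$.} With the correct roots, harmonicity forces only $u=O(r^n)$. All terms $r^\lambda(\log r)^q$ with $0<\lambda<n$, and all logarithmic terms at $\lambda=n$, must vanish. The coefficient of $r^n$, however, is genuinely free.

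Your fallback cannot remove it if it uses only that $u$ is bounded and Bergman-harmonic. Consider the Poisson--Szeg\H{o} integral on the ball of a smooth nonnegative function supported in a cap. It is bounded and Bergman-harmonic on the whole ball. Near any boundary patch away from the cap, it equals $(1-|z|^2)^n$ times a smooth positive function. So something specific to $u$ is needed, and this is the missing idea.

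\paragraph{How the paper closes the gap.} For $n\ge3$, the paper exploits the vanishing of the $r^2$-term itself.
\begin{enumerate}
\item Harmonicity gives $u=O(r^n)$. Via the localization estimate (Lemma~\ref{lem:local-error}), this yields $J(K_D^{-1/(n+1)})=\pi^n/n!+o(r^2)$.
\item The Monge--Amp\`ere rigidity of Ebenfelt--Xiao then makes the patch CR umbilical, hence locally spherical since $n\ge3$.
\item Consequently the Cheng--Yau solution is smooth up to the patch, and $K_D=\frac{n!}{\pi^n}\rho_D^{-n-1}+A$ with $A$ smooth.
\item The critical-exponent case $s=n+2$ of the variation formula then gives $u=O(r^{n+2})$ (Lemma~\ref{lem:EX-local-use}, Proposition~\ref{prop:decay-high-dim}).
\end{enumerate}
Thus the free order-$n$ coefficient is killed not by analysis at order $n$. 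It is killed by the CR-geometric consequence of harmonicity at order $2$.

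Once $u=O(r^{n+\varepsilon})$ in either dimension, the indicial bootstrap gives $u\in\dot C^\infty(X_\Theta)$. This is exactly the flatness the Carleman argument requires.
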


\begin{remark}
While completing this paper, we became aware of the
independent work of Li--Liu \cite{LiLiu2026csc}, which was posted on September 21, 2026, and where they prove, in complex dimensions \(n\ge3\), that the Bergman metric of a possibly unbounded
pseudoconvex domain is K\"ahler--Einstein if it has constant scalar
curvature and the boundary contains a smooth strongly pseudoconvex point. Thus, for $n\geq 3$, Theorem \ref{thm:csck-ke} is contained in \cite{LiLiu2026csc}. See also Remark \ref{rem:LL} below.
\end{remark}

The local nature of the boundary hypothesis is an important feature in the results above.  The local strongly pseudoconvex boundary patch supplies sufficiently precise
asymptotic information to deduce the global curvature conclusions. In particular, no
global smoothness assumption on $\partial\om$ is required.

For the reader's convenience, we give here a very brief executive summary of the ideas of proof of Theorems \ref{thm:extremal-csck} and \ref{thm:csck-ke}.

For the implication in Theorem \ref{thm:extremal-csck}, Proposition~\ref{prop:local-boundary-asymptotics}
shows that $\nabla_{\gB}^{1,0}\Scal(\gB)$ tends to zero on the strongly
pseudoconvex patch. Extremality makes this vector field holomorphic.
Boundary uniqueness and the identity theorem force it to vanish, and
the boundary asymptotics then determine the constant scalar curvature
to be $-n$.

The proof of Theorem \ref{thm:csck-ke} is more involved. The cscK condition implies that the normalized Bergman invariant $u=\log(\cB/\Cn)$  is harmonic with respect to the Bergman metric. Using localization of the Bergman kernel \cite{HHL2026} together with the Fefferman expansion, we obtain precise asymptotics for $u$ at the strongly pseudoconvex boundary patch. An indicial analysis of the Bergman Laplacian then shows that $u$ vanishes to infinite order there. The final step is a local unique-continuation theorem at the ACH boundary: we construct a tangentially localized Carleman weight and apply H\"ormander's estimate to conclude that $u$ vanishes on a one-sided collar. Real-analytic continuation then gives $u\equiv0$ on $\Omega$, which is equivalent to
\[
        \Ric(\gB)=-\gB.
\]
The passage to infinite-order vanishing requires separate arguments in complex dimension two and in higher dimensions; the latter uses a Monge--Amp\`ere rigidity argument on the localized strongly pseudoconvex domain.

The preceding results are stated for bounded domains, where the Bergman
kernel and Bergman metric are automatically nondegenerate throughout the
domain.  The argument is, however, substantially local near the strongly
pseudoconvex boundary patch.  Combining the localization theorem of
Hsiao--Huang--Li \cite{HHL2026} with the local Bergman nondegeneracy
result from our joint work with Sha and Xiao
\cite{EGSXscalarcurv2026}, one obtains the following result for possibly unbounded pseudoconvex
domains.  Note that in this setting the Bergman kernel may vanish at
interior points, and even when $K_\om(z,z)>0$ the Hermitian form given by $\partial\bar\partial\log K_\om(z,z)$ may fail to be positive definite. In the setting of our result, however, Proposition \ref{prop:unbounded-finite-type-nondegeneracy} (proved in \cite{EGSXscalarcurv2026}) below shows that the Bergman metric is well-defined on an open dense subset $\Omega^*\subset\Omega$, containing a one-sided neighborhood of every smooth strongly pseudoconvex boundary point. The result in the possibly unbounded case can be formulated as follows:

\begin{theorem}
\label{thm:unbounded-extension}
Let $\om\subset\C^n$, $n\ge2$, be a connected, possibly unbounded,
pseudoconvex domain whose boundary contains a smooth strongly pseudoconvex point $p_0$.
If the Bergman metric is extremal on a one-sided neighborhood of $p_0$, then it has constant scalar curvature $=-n$, and it is also K\"ahler--Einstein with Einstein constant $=-1$ on its domain of existence $\om^*$. Moreover, in this case, $\om^*$ equals $\om_K:=\{z\in\om:K_\om(z,z)>0\}$, where $K_\om(z,w)$ denotes the Bergman kernel of $\om$.
\end{theorem}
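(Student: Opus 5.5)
The plan is to reduce Theorem~\ref{thm:unbounded-extension} to the local mechanisms behind Theorems~\ref{thm:extremal-csck} and~\ref{thm:csck-ke}, and then propagate the conclusion by real-analyticity.

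\textbf{Step 1: local asymptotics.} First, the Hsiao--Huang--Li localization theorem \cite{HHL2026} compares the Bergman kernel of $\om$ near $p_0$ with that of a small smoothly bounded strongly pseudoconvex domain $D\subset\om$ sharing a boundary patch with $\om$ near $p_0$. Modulo a term smooth up to the patch, $K_\om$ then has Fefferman's form $\phi\rho^{-n-1}+\psi\log(-\rho)$ in a one-sided neighborhood $U$ of $p_0$. In particular $K_\om(z,z)>0$ and $g_B$ is nondegenerate on $U$, consistent with Proposition~\ref{prop:unbounded-finite-type-nondegeneracy}. So the boundary asymptotics of Proposition~\ref{prop:local-boundary-asymptotics} hold on $U$: $\Scal(g_B)\to-n$ and $\nabla^{1,0}_{g_B}\Scal(g_B)\to0$ at the patch.

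\textbf{Step 2: extremal $\Rightarrow$ cscK $\Rightarrow$ KE near $p_0$.}
\begin{itemize}
\item Extremality on $U$ makes $X=\nabla^{1,0}\Scal$ holomorphic on $U$, with components tending to zero on an open piece of the strongly pseudoconvex hypersurface. Boundary uniqueness (a holomorphic function with zero boundary values on an open piece of a smooth hypersurface vanishes) gives $X\equiv0$ on $U$. Hence $\Scal$ is constant on $U$, and the asymptotics force the constant to be $-n$.
\item Next run the proof of Theorem~\ref{thm:csck-ke} verbatim on $U$. The function $u=\log(\cB/\Cn)$ is $g_B$-harmonic on $U$. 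The indicial analysis, separately for $n=2$ and $n\ge3$, shows $u$ vanishes to infinite order at the patch. All inputs here are local, including the Monge--Amp\`ere rigidity on the localized domain $D$.
\item The Carleman-based unique continuation theorem at the ACH boundary then gives $u\equiv0$ on a one-sided collar, i.e.\ $\Ric(g_B)=-g_B$ there.
\end{itemize}

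\textbf{Step 3: globalization to $\om_K$.} This is the main obstacle, since $\om_K$ and the nondegeneracy locus may a priori differ. On $\om_K$, $\log K_\om(z,\bar z)$ is real-analytic. The KE identity on the collar says
\[
\det\bigl(\partial_i\bar\partial_j\log K_\om\bigr)=\Cn K_\om .
\]
Write $F:=\det(\partial\bar\partial\log K)-\Cn K$. Then $F$ is real-analytic on $\om_K$ and vanishes on an open set. I would check that $\om_K$ is connected, because its complement is a real-analytic subvariety $\{K(z,\bar z)=0\}$ of positive codimension; if it had real codimension one, then $K\ge0$ would vanish there to even order, and I expect this still does not disconnect. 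Granting that, $F\equiv0$ on $\om_K$. Hence $\det(\partial\bar\partial\log K)=\Cn K>0$ on all of $\om_K$.

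\textbf{Step 4: positivity and conclusion.} It remains to show that $\partial\bar\partial\log K$ is positive definite everywhere on $\om_K$, not merely nondegenerate. The form is always positive semidefinite where $K>0$, by the Cauchy--Schwarz/extremal-function description of the Bergman metric. Semidefinite with positive determinant means positive definite. Thus $\om^*=\om_K$, and on it $\Ric(g_B)=-\partial\bar\partial\log\det=-\partial\bar\partial\log K=-g_B$. Constant scalar curvature $-n$ follows, and extremality holds trivially. The connectivity of $\om_K$ in Step 3 is where I expect the most care is needed. If that argument fails, I would instead continue analytically along paths inside $\om^*$, using Proposition~\ref{prop:unbounded-finite-type-nondegeneracy}'s density of $\om^*$.
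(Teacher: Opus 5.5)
Steps 1, 2 and 4 are sound and follow the paper. Deriving $K_\om>0$ and $g_B>0$ near $p_0$ directly from the localized Fefferman expansion is a legitimate substitute for Proposition~\ref{prop:unbounded-finite-type-nondegeneracy}.

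\textbf{The gap is in Step 3.} Applying the identity theorem to $F$ on $\om_K$ requires $\om_K$ to be connected, and the reasoning you give for this does not work.

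\begin{itemize}
\item Even-order vanishing of a nonnegative real-analytic function along a real hypersurface does not stop that hypersurface from disconnecting. Think of $x_1^2$ on $\mathbb R^{2n}$: it vanishes to even order along $\{x_1=0\}$, which disconnects. So the heuristic proves nothing.
\item Your fallback is no better. Density of $\om^*$ says nothing about its connectivity. Its complement is $\{\det(KK_{\alpha\bar\beta}-K_\alpha K_{\bar\beta})=0\}$, the zero set of a nonnegative real-analytic function, which could a priori be a real hypersurface.
\end{itemize}

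\emph{The missing idea} is the complex-analytic nature of $Z_K$. Since $K>0$ near $p_0$, some $\varphi\in A^2(\om)$ is not identically zero. The set $W:=\om\setminus\{\varphi=0\}$ is the complement of a proper complex hypersurface, so it is connected (e.g.\ by the Riemann extension theorem) and dense in $\om$. Moreover $W\subset\om_K$, because $K(z,z)\ge|\varphi(z)|^2/\|\varphi\|^2$. Since $W\subset\om_K\subset\overline W$, the set $\om_K$ is connected. With this inserted, your route goes through.

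\textbf{How the paper avoids the issue.} The paper never needs connectivity of $\om_K$. Put $H_{\alpha\bar\beta}=KK_{\alpha\bar\beta}-K_\alpha K_{\bar\beta}$. On $\om_K$ one has $\det(\partial\bar\partial\log K)=\det H/K^{2n}$. Hence $\cB=\Cn$ is equivalent to
\[
\det H=\Cn K^{2n+1}.
\]
The function $\det H-\Cn K^{2n+1}$ is real analytic on all of $\om$, including $Z_K$, and $\om$ is connected by hypothesis. The identity theorem therefore propagates the relation to all of $\om$. Your Step 4 (positive semidefinite with positive determinant implies positive definite) then finishes exactly as in the paper.

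Clearing denominators gives the propagation across $Z_K$ for free. Your version needs the extra, standard but indispensable, complex-analytic input about $Z_K$.
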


The global continuation in Theorem~\ref{thm:unbounded-extension} requires
a small additional observation.  One cannot a priori continue the Bergman
invariant across the zero set $Z_K$ of the Bergman kernel on the diagonal.  Instead, after the
local argument gives $\cB=\Cn$ on a boundary collar, we clear the
denominator and obtain a real-analytic identity on all of $\om$.  This
identity propagates across $Z_K$ and, on the positive-kernel locus,
forces the Bergman Hessian to be positive definite. The continuation takes place on $\om$ itself, including $Z_K$.

%For the extremal implication, Proposition~\ref{prop:local-boundary-asymptotics}
%shows that $\nabla_{\gB}^{1,0}\Scal(\gB)$ tends to zero on the strongly
%pseudoconvex patch. Extremality makes this vector field holomorphic.
%Boundary uniqueness and the identity theorem force it to vanish, and
%the boundary asymptotics then determine the constant scalar curvature
%to be $-n$.
%
%For the cscK implication, $u=\log(\cB/\Cn)$ satisfies
%$\Delta_{\gB}u=0$.  The localization technique in \cite{HHL2026} supplies its boundary expansion.
%In dimension two, $u=\Ophg(r^4)$ is already beyond the larger
%indicial root $2$. For $n\ge3$, indicial analysis first gives
%$u=O(r^n)$; the resulting Monge--Amp\`ere estimate forces sphericality
%of the boundary patch and improves this to $O(r^{n+2})$.
%In either case, the indicial argument then gives infinite-order vanishing.
%
%The local ACH geometry
%\cite{EpsteinMelroseMendoza1991,GuillarmouSaBarreto2008} permits a
%unique-continuation argument from this open boundary patch. Using the
%convexity mechanism of Vasy--Wunsch \cite{VasyWunsch2005}, we construct
%a tangentially localized weight and verify H\"ormander's quantitative
%Carleman condition. Proposition~\ref{prop:uci} gives $u=0$ on a
%nonempty collar, and real-analytic continuation completes the proof.

The paper is organized as follows.  Section~\ref{sec:prelim} fixes the
Bergman and curvature conventions and develops the common local boundary
asymptotics used in both the extremal and cscK arguments.
Section~\ref{sec:extremal-proof} proves
Theorem~\ref{thm:extremal-csck}.  Section~\ref{sec:csck-boundary}
reduces the constant-scalar-curvature condition to a harmonic equation and
records the localization estimates needed below.
Section~\ref{sec:theta-indicial} develops the local ACH geometry and proves
infinite-order boundary vanishing by indicial analysis. Section~\ref{sec:unique-continuation} proves the
local unique-continuation result and completes the proof of
Theorem~\ref{thm:csck-ke}.  Section~\ref{sec:unbounded-extension}
records the local Bergman nondegeneracy result needed for possibly
unbounded domains and proves
Theorem~\ref{thm:unbounded-extension}.

\begin{remark}\label{rem:LL}
As mentioned above, we became aware of the work of Li--Liu \cite{LiLiu2026csc} while completing the current paper. For $n\geq3$, their proof first obtains boundary sphericity and then uses a Biquard--Herzlich half-ball unique-continuation argument. The approach in this paper instead uses indicial analysis and a local ACH
unique-continuation theorem based on H\"ormander's Carleman estimate. Our approach here provides a unique-continuation result that applies at general strongly pseudoconvex points. In particular, our argument applies in complex dimension \(2\) without an
a priori sphericity assumption as in \cite{LiLiu2026csc}.
\end{remark}

\subsection*{Acknowledgments.}
The authors are grateful to Ming Xiao and Zehao Sha for many helpful
discussions and for allowing us to repeat some calculations from our joint work. The second author is grateful to Zehao Sha for
introducing him to extremal K\"ahler metrics, and to Achinta Kumar Nandi
for introducing him to ACH metrics and related techniques.

\section{Preliminaries and notation}\label{sec:prelim}

\subsection{Conventions on the Bergman metric}

Throughout, if \(X\) is a smooth manifold with boundary,
\(C^\infty(X)\) denotes the space of functions smooth up to
\(\partial X\).  In a local boundary calculation, \(f\in C^\infty\) (and similarly \(r^mC^\infty\)) means
that \(f\) extends smoothly to the closed one-sided collar under
consideration, after shrinking the collar if necessary.  On a
manifold without boundary, \(C^\infty(X)\) has its usual meaning.

Until Section~\ref{sec:unbounded-extension}, let
$\om\subset\C^n$, $n\ge2$, be a bounded connected pseudoconvex domain
whose boundary contains a smooth strongly pseudoconvex point. Let $K_\om(z,w)$ be the Bergman kernel on $\om$ with respect to Euclidean volume $dV$. We omit the subscript \(\om\) when we refer to quantities on \(\om\). On the diagonal, we write
\[
 K=K_\om(z,z),\qquad
 (\gB)_{\alpha\bar\beta}
 =\partial_\alpha\partial_{\bar\beta}\log K,
 \qquad
 G=\det\bigl((\gB)_{\alpha\bar\beta}\bigr),
 \qquad
 \cB=\frac{G}{K}.
\]
The K\"ahler form of $\gB$ is $\omega_B=\ddbar\log K$.  The Ricci curvature and the scalar curvature are defined by
\[
 \Ric(\gB)=-\ddbar\log G,
 \qquad
 \Scal(\gB)=\tr_{\gB}\Ric(\gB).
\]
Denote by \(\partial_\alpha=\partial/\partial z_\alpha\) and \(\partial_{\bar\beta}=\partial/\partial\bar z_\beta\). We use the complex Laplacian \(\Delta_{\gB} f=(\gB)^{\alpha\bar\beta}f_{\alpha\bar\beta}\), where repeated Greek indices are summed from \(1\) to \(n\). For a smooth real-valued function $f$, Fefferman's Monge--Amp\`ere determinant is
\begin{equation}\label{eq:J-definition}
 J(f)=(-1)^n\det
 \begin{pmatrix}
 f&f_{\bar\beta}\\
 f_\alpha&f_{\alpha\bar\beta}
 \end{pmatrix}.
\end{equation}
We use throughout the functions
\begin{equation}\label{eq:uv-global-definitions}
 v:=K^{-1/(n+1)},
 \qquad
 u:=\log\frac{\cB}{\Cn},\qquad \text{where}\quad
 \Cn=\frac{(n+1)^n\pi^n}{n!}.
\end{equation}
If a subscript \(D\) is displayed, the corresponding object is formed from
the Bergman kernel of the localized strongly pseudoconvex domain \(D\)
introduced below.

\subsection{Localization and adapted frames}
\label{subsec:localized-patch}

Let $M$ be a nonempty smooth strongly pseudoconvex open subset of
$\partial\om$. Fix $p\in M$, and choose a neighborhood $\widetilde U$ of
$p$ such that
\[
        \partial\om\cap\widetilde U\subset M.
\]
By the localization theorem of Hsiao--Huang--Li
\cite[Theorem~1.3]{HHL2026}, there exist a neighborhood
$U\Subset\widetilde U$ of $p$ and a bounded, smoothly bounded, strongly
pseudoconvex domain $D\subset\om\cap\widetilde U$ such that
\[
        U\cap\om=U\cap D
\]
and
\begin{equation}\label{eq:localized-kernel-difference}
        K_\om(z,w)-K_D(z,w)
        \in C^\infty\bigl((U\cap\overline D)\times(U\cap\overline D)\bigr).
\end{equation}
Choose a relatively compact open subpatch
\[
        p\in M_0\Subset M\cap U.
\]
All boundary estimates are understood locally and uniformly over a fixed
relatively compact subpatch. We shrink the patch and collar when needed. We choose a positive Fefferman defining function $r$ for $D$ on the common collar. Thus
\[
 \om\cap U=\{r>0\},\qquad
 \partial\om\cap U=\{r=0\},\qquad
 dr\ne0\quad\text{on }\partial\om\cap U.
\]
The same equations describe $D$ on this collar. Fefferman's expansion for $K_D$, together with \eqref{eq:localized-kernel-difference}, gives
\begin{equation}\label{eq:local-fefferman}
 K=\frac{\phi}{r^{n+1}}+\psi\log r,
\end{equation}
where $\phi$ and $\psi$ extend $C^\infty$-smoothly up to
$r=0$ on the fixed collar, and $\phi|_{r=0}>0$, see \cite{Fefferman74}. The defining function is normalized so that
\begin{equation}\label{eq:feff-r}
 J(r)=1+O(r^{n+1});
\end{equation}
see \cite{Fefferman76} and \cite[Section~2.1]{BergmanlogflatobsflatEX24}.

Relabeling coordinates if necessary, assume $r_n\ne0$ on $U$, and
define
\[
\begin{aligned}
L_1&=r_n\left(\frac{\partial}{\partial z_1}
       -\frac{r_1}{r_n}\frac{\partial}{\partial z_n}\right),\\
L_j&=\frac{\partial}{\partial z_j}
       -\frac{r_j}{r_n}\frac{\partial}{\partial z_n},
       \qquad 2\le j\le n-1,\\
L_n&=\frac1{r_n}\frac{\partial}{\partial z_n}.
\end{aligned}
\]
Thus $L_jr=0$ for $j<n$ and $L_nr=1$.  The transition matrix from the Euclidean holomorphic frame to $L_1,\ldots,L_n$ has determinant one. We use Greek indices to refer to Euclidean coordinates and Latin indices to the adapted complex frame. Thus
\[
 f_\alpha=\frac{\partial f}{\partial z_\alpha},\qquad
 f_{\alpha\bar\beta}
 =\frac{\partial^2f}{\partial z_\alpha\partial\bar z_\beta},
 \qquad
 f_i=L_if,\qquad
 f_{i\bar j}=\partial\bar\partial f(L_i,\overline{L_j}).
\]
The associated real adapted frame is
\[
 T_{2j-1}=\operatorname{Im}L_j,\quad
 T_{2j}=\operatorname{Re}L_j\quad(1\le j<n),\qquad
 T_{2n-1}=\operatorname{Im}L_n,\quad
 \partial_r=\operatorname{Re}L_n.
\]
The fields $T_1,\ldots,T_{2n-1}$ are tangent to the level sets of $r$, and $\partial_r r=1$.  We write
\[
 \mathcal T_{\mathrm{tan}}=\{T_1,\ldots,T_{2n-1}\}
\]
for the real adapted fields tangent to the level sets of $r$.

\subsection{Vanishing orders and polyhomogeneous functions}

For the arguments below, we use the following boundary-order and polyhomogeneous notation. We write $h\in O(r^k,\log r)$ if $h$ is smooth for $r>0$ and, on a sufficiently small one-sided neighborhood,
\[
 |h|\le Cr^k(1+|\log r|)^q
\]
for some constants $C>0$ and $q\in\mathbb N_0$.  The notation $O(r^k)$ means that the logarithmic factor is absent. Tangential derivatives are products of fields in $\mathcal T_{\mathrm{tan}}$, while the number of occurrences of $\partial_r$ in an adapted differential monomial will be called its normal order.

Let \(X_0=[0,\varepsilon)_r\times M_0\). A function \(h\) is polyhomogeneous on \(X_0\) if it has an expansion
\begin{equation}\label{eq:polyhomogeneous-definition}
 h\sim
 \sum_{(z,q)\in E}h_{zq}(y)r^z(\log r)^q,
 \qquad h_{zq}\in C^\infty(M_0),
\end{equation}
where \(E\subset\mathbb R\times\mathbb N_0\) is bounded below and contains only finitely many pairs with \(z<N\) for each \(N\).  The expansion is understood with the following differentiated remainder condition: for every real \(N\) and every adapted differential monomial \(Q\) of normal order
\(\nu\),
\[
 Q\left(
 h-\sum_{\substack{(z,q)\in E\\z<N}}
 h_{zq}(y)r^z(\log r)^q
 \right)
 \in O(r^{N-\nu},\log r).
\]
For \(m\in\mathbb R\), let \(\Aphg^m(X_0)\) be the class of such functions whose index set satisfies \(z\ge m\).  We set \(\Aphg(X_0)=\bigcup_m\Aphg^m(X_0)\).  In an identity, \(\Ophg(r^m)\) denotes an unspecified element of \(\Aphg^m(X_0)\), and
\[
 \Ophg(r^\infty):=\bigcap_{m\ge0}\Aphg^m(X_0).
\]
The same notation is used with any other smooth positive boundary defining function in place of $r$.  In particular,
\begin{equation}\label{eq:phg-calculus-rules}
 \partial_r\Ophg(r^m)=\Ophg(r^{m-1}),\qquad
 T\Ophg(r^m)=\Ophg(r^m)\quad(T\in\mathcal T_{\mathrm{tan}}),
\end{equation}
and products have the sum of the two orders. We shall also use the standard closure properties of the polyhomogeneous
calculus: smooth compositions preserve polyhomogeneity whenever the
composition is defined up to the boundary; in particular, if $F$ is
polyhomogeneous with positive smooth boundary value, then $F^\gamma$ and
$\log F$ are polyhomogeneous for every $\gamma\in\mathbb R$.  Likewise, a
matrix-valued polyhomogeneous function with invertible boundary value has a
polyhomogeneous inverse. Logarithmic factors imply the useful pointwise inclusion
\begin{equation}\label{eq:phg-to-big-O}
 \Ophg(r^m)\subset O(r^{m-\delta})
 \qquad\text{for every }\delta>0.
\end{equation}

\subsection{Basic Monge--Amp\`ere identities and variation formulas}
\label{subsec:MA-identities}

For every positive smooth function \(q\), the determinant
\eqref{eq:J-definition} satisfies
\begin{equation}\label{eq:J-alt}
        J(q)
        =
        q^{n+1}
        \det\bigl(
        \partial_\alpha\partial_{\bar\beta}(-\log q)
        \bigr).
\end{equation}
Indeed, this follows directly by taking the Schur complement in
\eqref{eq:J-definition}.

For
\[
        v=K^{-1/(n+1)}
\]
we therefore have
\[
        K=v^{-(n+1)},
        \qquad
        \gB=(n+1)\partial\bar\partial(-\log v),
\]
and hence
\begin{equation}\label{eq:B-J-identity}
        \cB=(n+1)^nJ(v).
\end{equation}
Consequently,
\begin{equation}\label{eq:u-J-identity}
        u
        =
        \log\left(\frac{n!}{\pi^n}J(v)\right).
\end{equation}

Since \(G=\cB K\),
\[
        \Ric(\gB)
        =
        -\ddbar\log(\cB K)
        =
        -\ddbar\log\cB-\gB.
\]
Taking the trace with respect to \(\gB\), and recalling that
\(u=\log(\cB/\Cn)\), gives
\begin{equation}\label{eq:scalar-u-identity}
        \Scal(\gB)+n
        =
        -\Delta_{\gB}u.
\end{equation}

We shall also use the following finite perturbation formulas for
Fefferman's Monge--Amp\`ere determinant, combining Lemmas~2.3--2.4 of
\cite{BergmanlogflatobsflatEX24}.

\begin{lemma}[Monge--Amp\`ere variation formulas]
\label{lem:MA-variation}
Let \(\chi,\eta\) be \(C^2\)-smooth near the boundary, with
\(\chi=0\) on the boundary and \(\chi>0\) on the interior side.
Then, for \(s\ge2\),
\begin{equation}\label{eq:J-power-variation}
        J(\chi+\eta\chi^s)
        =
        \left(
        1+s(n+2-s)\eta\chi^{s-1}
        \right)J(\chi)
        +O(\chi^s),
\end{equation}
and, for \(s\ge3\),
\begin{equation}\label{eq:J-log-variation}
\begin{aligned}
        J(\chi+\eta\chi^s\log\chi)
        &=
        \Bigl(
        1+s(n+2-s)\eta\chi^{s-1}\log\chi \\
        &\hspace{2cm}
        +(n+2-2s)\eta\chi^{s-1}
        \Bigr)J(\chi)
        +O(\chi^s\log\chi).
\end{aligned}
\end{equation}
\end{lemma}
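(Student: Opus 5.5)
We prove both formulas by a direct computation with the bordered determinant \eqref{eq:J-definition}, after reducing to a one-variable perturbation problem in the transverse direction. Write \(f=\chi+\eta\chi^s\) (respectively \(f=\chi+\eta\chi^s\log\chi\)). We use the product rule to express the first and second derivatives of \(f\) in terms of those of \(\chi\). Every term in \(f_\alpha\), \(f_{\alpha\bar\beta}\) that carries a factor \(\chi^{s-1}\) or higher without a second derivative falling on \(\chi^s\) contributes only \(O(\chi^s)\) after multiplication by the remaining bounded entries, or \(O(\chi^{s}\log\chi)\) in the log case. Thus, modulo \(O(\chi^s)\), we have
\[
 f=\chi+O(\chi^s),\qquad f_\alpha=\chi_\alpha+s\eta\chi^{s-1}\chi_\alpha+O(\chi^{s-1}),
\]
\[
 f_{\alpha\bar\beta}=\chi_{\alpha\bar\beta}+s\eta\chi^{s-1}\chi_{\alpha\bar\beta}+s(s-1)\eta\chi^{s-2}\chi_\alpha\chi_{\bar\beta}+O(\chi^{s-1}).
\]
Here the \(O(\chi^{s-1})\) pieces in \(f_\alpha\) involve \(\eta_\alpha\chi^s/\chi\)-type terms. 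These need care, since they sit in the border row and get multiplied by the \(\chi^{s-2}\)-type entry.

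The key step is to perform row and column operations that isolate the rank-one piece \(\chi^{s-2}\chi_\alpha\chi_{\bar\beta}\). Subtracting \(s(s-1)\eta\chi^{s-2}\chi_\alpha/(1+\dots)\) times the first row from the other rows uses the border entries \(f_{\bar\beta}\approx(1+s\eta\chi^{s-1})\chi_{\bar\beta}\). This removes the rank-one term at the cost of modifying the corner entry \(f\) by \(-s(s-1)\eta\chi^{s-1}\cdot\chi\), which is \(O(\chi^s)\). Alternatively, and more cleanly, we use the Schur-complement form \eqref{eq:J-alt} when \(\chi>0\): \(J(f)=f^{n+1}\det(-\log f)_{\alpha\bar\beta}\). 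We set \(\log f=\log\chi+\log(1+\eta\chi^{s-1})\) and expand \(\partial\bar\partial\log(1+\eta\chi^{s-1})\). The scalar multiplying \(\chi_\alpha\chi_{\bar\beta}\) is then \((s-1)(s-2)\eta\chi^{s-3}\) to leading order. The determinant of \(A+c\,\chi_\alpha\chi_{\bar\beta}+d\,A\) is computed by the matrix determinant lemma, with \(A=(-\log\chi)_{\alpha\bar\beta}\). The required quantity \(|\partial\chi|^2_{A}\) is expressible through \(J(\chi)\): by the Schur complement identity one has \(\chi^{-2}|\partial\chi|_A^2\)-type relations, namely \(J(\chi)=\chi^{n+1}\det A\) and \(\chi_{\alpha\bar\beta}\)-inverse formulas giving \(|\partial\chi|^2_{A}\,\chi^{-2}=\chi^{-1}\cdot(\text{bounded})\cdot\) and \(1-\chi\,|\partial\chi|^{2}\)-type quantities. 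We then collect the coefficient of \(\eta\chi^{s-1}\). The contributions are \((n+1)\) from \(f^{n+1}\), \(-n(s-1)\) from the trace part, and \((s-1)(s-2)\cdot\) the factor arising from \(\chi^{2}|\partial\chi|_A^2\to1\) at the boundary. Together these should give \(s(n+2-s)\). This matches the known indicial polynomial, since \(s(n+2-s)\) has roots \(0,n+2\), consistent with Fefferman's obstruction at order \(n+2\).

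For the logarithmic formula we repeat the computation with \(\eta\chi^{s}\log\chi\). Differentiating \(\chi^{s}\log\chi\) produces \(s\chi^{s-1}\log\chi+\chi^{s-1}\), and a second derivative gives \(s(s-1)\chi^{s-2}\log\chi+(2s-1)\chi^{s-2}\). The \(\log\chi\) coefficients reproduce \(s(n+2-s)\). The non-log coefficient is the \(s\)-derivative of \(s(n+2-s)\), namely \(n+2-2s\), which is the standard fact that \(\partial_s\) of \(\chi^s\) is \(\chi^s\log\chi\) and everything is polynomial in \(s\). This gives a slick route: we differentiate \eqref{eq:J-power-variation} formally in \(s\), justified by the explicit polynomial dependence on \(s\) of each term before error estimation. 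The restriction \(s\ge3\) ensures that the extra \(\log\) does not spoil the \(O(\chi^{s})\) remainder bounds, since second derivatives of \(\chi^s\log\chi\) are only \(O(\chi^{s-2}\log\chi)\).

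The main obstacle is the bookkeeping of the error terms. We must check that all cross terms involving derivatives of \(\eta\) and the nonconstant matrix \(A\) are genuinely \(O(\chi^s)\) with only \(C^2\) regularity. In particular, the border terms involving \(\eta_\alpha\chi^s\) are paired with the \(\chi^{-1}\)-singular inverse of \(A\) in the direction of \(\partial\chi\). We would handle this by working with the bordered form \eqref{eq:J-definition} directly, where all entries are bounded, and using multilinearity in the rows.
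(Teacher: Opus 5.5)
The paper does not prove this lemma; it quotes Lemmas~2.3--2.4 of \cite{BergmanlogflatobsflatEX24}. Your argument therefore has to stand on its own, and as written it does not. The only real content of the lemma is the coefficient $s(n+2-s)$ (and $n+2-2s$), and both routes you sketch get it wrong.

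\emph{Row-operation route.} Take $\lambda=s(s-1)\eta\chi^{s-2}/(1+s\eta\chi^{s-1})$ and subtract $\lambda\chi_\alpha$ times row $0$ from row $\alpha$. This does not touch the corner entry $f$. It changes the column-$0$ entries $f_\alpha$ by $-\lambda\chi_\alpha f=-s(s-1)\eta\chi^{s-1}\chi_\alpha+O(\chi^{2s-2})$. That is of order $\chi^{s-1}$, the very order being computed. It is exactly the source of the $-s(s-1)$ in $s(n+2-s)=sn+s-s(s-1)$; discarding it as $O(\chi^s)$ gives the wrong coefficient $s(n+1)$.

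\emph{Schur-complement route.} Your contributions $(n+1)$, $-n(s-1)$ and $(s-1)(s-2)$ do not sum to $s(n+2-s)$. At $s=2$ they give $1$ instead of $2n$, and they vanish at neither $s=0$ nor $s=n+2$, so they fail your own indicial check. The error comes from the definition of $A$, which gives $-\chi^{-1}\partial\bar\partial\chi=A-\chi^{-2}\partial\chi\otimes\bar\partial\chi$. Hence the term $-(s-1)\eta\chi^{s-2}\partial\bar\partial\chi$ of $-\partial\bar\partial\log(1+\eta\chi^{s-1})$ equals $(s-1)\eta\chi^{s-1}A-(s-1)\eta\chi^{s-3}\partial\chi\otimes\bar\partial\chi$. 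So the trace part contributes $+n(s-1)$. The total rank-one coefficient is $-(s-1)^2\eta\chi^{s-3}$, contributing $-(s-1)^2$, and indeed $(n+1)+n(s-1)-(s-1)^2=s(n+2-s)$.

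The normalization is also off. The correct identity is $\chi^{-2}A^{\alpha\bar\beta}\chi_\alpha\chi_{\bar\beta}=1-(-1)^n\chi\det(\chi_{\alpha\bar\beta})/J(\chi)$, not $\chi^{2}|\partial\chi|_A^2\to1$. Used this way it presupposes $J(\chi)\neq0$, which is not a hypothesis; the adjugate form of the determinant lemma avoids that.

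\emph{Logarithmic case.} Differentiating \eqref{eq:J-power-variation} in $s$ is not a valid argument. Because $J$ is nonlinear, $\partial_sJ(\chi+\eta\chi^s)$ is the linearization of $J$ at $\chi+\eta\chi^s$ applied to $\eta\chi^s\log\chi$. It is not $J(\chi+\eta\chi^s\log\chi)-J(\chi)$, and nothing controls the $s$-derivative of the $O(\chi^s)$ remainder.

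The fix is to finish the bordered computation you started. It uses only bounded two-jets and needs no nondegeneracy of $J(\chi)$. Put $\epsilon=\eta\chi^{s-1}$.
\begin{enumerate}[label=(\arabic*)]
\item The cross terms $s\chi^{s-1}(\eta_\alpha\chi_{\bar\beta}+\chi_\alpha\eta_{\bar\beta})$ in the $(\alpha,\bar\beta)$ block are genuinely $O(\chi^{s-1})$. Push them into the border by operations against row $0$ and column $0$, at cost $\chi\cdot O(\chi^{s-1})=O(\chi^s)$.
\item Remove the rank-one term by the row operation above. This turns $f_\alpha$ into $\chi_\alpha(1+s(2-s)\epsilon)+O(\chi^s)$.
\item Factor $1+s\epsilon$ out of each of the $n$ columns $\bar\beta$.
\item Split column $0$ as $(1+s(2-s)\epsilon)(\chi,\chi_\alpha)^T$ plus a corner-only column of size $O(\chi\epsilon)=O(\chi^s)$.
\end{enumerate}
This gives
\[
J(\chi+\eta\chi^s)=(1+s\epsilon)^n\bigl(1+s(2-s)\epsilon\bigr)J(\chi)+O(\chi^s),
\]
where $\epsilon^2=O(\chi^{2s-2})\subset O(\chi^s)$ because $s\ge2$.

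For the logarithmic formula, repeat this with $s\eta$ replaced by $(s\log\chi+1)\eta$ in the first-derivative slots and $s(s-1)\eta$ replaced by $(s(s-1)\log\chi+2s-1)\eta$ in the rank-one slot. The linear coefficient becomes $(n+1)(s\log\chi+1)-(s(s-1)\log\chi+2s-1)=s(n+2-s)\log\chi+(n+2-2s)$. The quadratic errors are $O(\chi^{2s-2}\log^2\chi)\subset O(\chi^s\log\chi)$ since $s\ge3$.
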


\begin{remark}\label{rem:MA-variation-dim2}
Although the variation formulas above are used in
\cite{BergmanlogflatobsflatEX24} inside the proof of a proposition whose
geometric statement assumes \(n\ge3\), Lemmas~2.3--2.4 themselves have no
such dimension restriction.  In particular, when \(n=2\) and \(s=4\),
\[
        J(\chi+\eta\chi^4)
        =
        J(\chi)+O(\chi^4),
\]
whereas
\[
        J(\chi+\eta\chi^4\log\chi)
        =
        \bigl(1-4\eta\chi^3\bigr)J(\chi)
        +O(\chi^4\log\chi).
\]
\end{remark}

\subsection{Boundary asymptotics of the Bergman invariant and scalar curvature}
\label{subsec:common-local-asymptotics}

The following proposition is purely local and does not assume either
extremality or constant scalar curvature.  It is the common boundary input
used in both rigidity arguments.

\begin{proposition}
\label{prop:local-boundary-asymptotics}
On the strongly pseudoconvex collar fixed above, set
\[
        u:=\log\frac{\cB}{\Cn},
        \qquad
        f:=\Scal(\gB)+n.
\]
Then
\begin{equation}\label{eq:local-u-orders}
        u=\Ophg(r^2)\quad(n\ge3),
        \qquad
        u=\Ophg(r^4)\quad(n=2).
\end{equation}
Moreover, in the adapted frame,
\begin{equation}\label{eq:local-inverse-orders}
\begin{aligned}
        (\gB)^{a\bar b}&=\Ophg(r),&
        (\gB)^{a\bar n}&=\Ophg(r^2),\\
        (\gB)^{n\bar a}&=\Ophg(r^2),&
        (\gB)^{n\bar n}&=\Ophg(r^2),
        \qquad a,b<n.
\end{aligned}
\end{equation}
Consequently,
\begin{equation}\label{eq:local-scalar-orders}
        f=\Ophg(r^2)\quad(n\ge3),
        \qquad
        f=\Ophg(r^4)\quad(n=2),
\end{equation}
and, in every dimension \(n\ge2\),
\begin{equation}\label{eq:local-scalar-derivative-orders}
        \overline{L_n}f=\Ophg(r),
        \qquad
        \overline{L_a}f=\Ophg(r^2),
        \qquad a<n.
\end{equation}
\end{proposition}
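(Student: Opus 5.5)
We derive everything from the local Fefferman expansion \eqref{eq:local-fefferman} together with the Monge--Amp\`ere identities of Subsection~\ref{subsec:MA-identities}.

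\textbf{Step 1: the expansion of \(v\).} Write
\[
        K=\frac{\phi}{r^{n+1}}\bigl(1+\phi^{-1}\psi\, r^{n+1}\log r\bigr).
\]
Then
\[
        v=K^{-1/(n+1)}=\phi^{-1/(n+1)}\,r\,\bigl(1+O(r^{n+1}\log r)\bigr).
\]
Expanding the smooth factor \(\phi^{-1/(n+1)}\) in powers of \(r\) along the transversal direction, we put \(v\) in the form
\[
        v=\rho+\eta_1\rho^{n+2}\log\rho+\Ophg(r^{n+2}),
\]
where \(\rho=\phi_0^{-1/(n+1)}r\) is smooth up to the boundary. The key point is a standard consequence of Fefferman's theorem: since \(J(r)=1+O(r^{n+1})\), the smooth part of \(v\) is itself an approximate solution. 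More precisely, Fefferman's identification of the leading coefficients gives \(\phi|_{r=0}=n!/\pi^n\) (up to the \(C_n\) normalization), and the smooth part agrees with \(c\,r\) modulo \(O(r^{n+2})\).

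\textbf{Step 2: the order of \(u\).} We apply \eqref{eq:u-J-identity} and compute \(J(v)\) using Lemma~\ref{lem:MA-variation}. A perturbation \(\eta\chi^s\) changes \(J\) at order \(\chi^{s-1}\) with coefficient \(s(n+2-s)\eta\), and a logarithmic perturbation with \(s=n+2\) changes it by \(-(n+2)\eta\chi^{n+1}\).
\begin{itemize}[label=\(\cdot\)]
\item For \(n\ge3\), the general statement \(u=\Ophg(r^2)\) only needs that the normalized \(J(v)\) equals \(\pi^n/n!\) at the boundary and that its first normal derivative vanishes. Both follow from the \(s=2\) and \(s=3\) variation terms, together with the known vanishing of the \(r^1\) coefficient in the Fefferman expansion, namely \(J(v)=J(r)(1+O(r^2))\).
\item For \(n=2\), we push the expansion further. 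Remark~\ref{rem:MA-variation-dim2} shows that an \(\eta\chi^4\) term contributes nothing at order \(\chi^3\). The classical fact that in dimension two the Bergman invariant satisfies \(J(v)-\pi^2/2=O(r^3)\) (the first obstruction-type coefficient is at \(r^3\), and there the log term and the \(r^3\) coefficient cancel) then yields \(\Ophg(r^4)\).
\end{itemize}
Making the cancellation in the \(n=2\) case precise is the main obstacle. It needs the exact relation between \(\psi|_{r=0}\) and the \(r^3\) coefficient of \(\phi\), which I would take from the Graham/Fefferman determination of the expansion via \(J\)-approximate solutions, transferred to \(\Omega\) by \eqref{eq:localized-kernel-difference}.

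\textbf{Step 3: the inverse metric.} We have \(\gB=(n+1)\partial\bar\partial(-\log v)\) and \(v=\rho(1+\Ophg(r))\). In the adapted frame,
\[
        (\gB)_{i\bar j}=(n+1)\Bigl(-\frac{v_{i\bar j}}{v}+\frac{v_iv_{\bar j}}{v^2}\Bigr).
\]
Since \(L_a r=0\), the tangential block is \(r^{-1}\) times the Levi form (which is positive), the entries \((\gB)_{a\bar n}\) are \(\Ophg(r^{-1})\), and \((\gB)_{n\bar n}=(n+1)r^{-2}(1+\Ophg(r))\). Factoring out \(D=\operatorname{diag}(r^{-1/2},\dots,r^{-1})\), the rescaled matrix has an invertible polyhomogeneous boundary value. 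By the closure of the phg calculus under inversion, the inverse has the orders stated in \eqref{eq:local-inverse-orders}.

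\textbf{Step 4: \(f\) and its derivatives.} By \eqref{eq:scalar-u-identity} we have \(f=-(\gB)^{i\bar j}u_{i\bar j}\), where \(u_{i\bar j}\) involves at most two \(\partial_r\) derivatives together with lower-order commutator terms. Combining Step 3 with \eqref{eq:phg-calculus-rules}:
\begin{itemize}[label=\(\cdot\)]
\item \((\gB)^{n\bar n}\partial_r^2u=\Ophg(r^2)\,\Ophg(r^{m-2})\);
\item mixed terms give \(\Ophg(r^2)\,\Ophg(r^{m-1})\);
\item tangential terms give \(\Ophg(r)\,\Ophg(r^m)\).
\end{itemize}
Hence \(f=\Ophg(r^m)\), where \(m=2\) or \(m=4\), which is \eqref{eq:local-scalar-orders}. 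Since \(\overline{L_n}=\tfrac12\partial_r+\) tangential, we get \(\overline{L_n}f=\Ophg(r^{m-1})\subset\Ophg(r)\). Since \(\overline{L_a}\) is tangential, \(\overline{L_a}f=\Ophg(r^m)\subset\Ophg(r^2)\). This gives \eqref{eq:local-scalar-derivative-orders} in every dimension.
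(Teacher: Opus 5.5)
\paragraph{Step 1 is false, but your $n\ge3$ case survives without it.}
Your Step~1 claim that the smooth part of $v$ agrees with $c\,r$ modulo $O(r^{n+2})$ means $\phi$ is constant modulo $O(r^{n+1})$. This is false for $n\ge3$. For $n=2$ it happens to be true, but only by Graham's explicit coefficient computation. The justification is also wrong: the approximate solution of $J=1$ is the Fefferman defining function $r$, not the smooth part of $v$, and the coefficients of $\phi$ carry local CR invariants. If the claim held, you would get $J(v)=\pi^n/n!+o(r^2)$ on every strongly pseudoconvex patch. Lemma~\ref{lem:local-error} would then give the hypothesis of Lemma~\ref{lem:EX-local-use}, and its proof would make every strongly pseudoconvex hypersurface in $\C^n$, $n\ge3$, locally spherical.

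Your $n\ge3$ argument in Step~2 does not actually use this. It uses only the correct, weaker input $\phi=n!/\pi^n+O(r^2)$ (Christoffers' vanishing of the $r^1$-coefficient). Then $v=Cr+r^3a_3+\Ophg(r^4)$, and the $s=3$ case of \eqref{eq:J-power-variation} gives $u=\Ophg(r^2)$, exactly as in the paper.

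Steps~3 and~4 also follow the paper's route, with two bookkeeping fixes:
\begin{enumerate}
\item With your symmetric scaling, $M^{-1}=\Ophg(r^0)$ alone yields only $(\gB)^{a\bar n}=\Ophg(r^{3/2})$. You must add that the off-diagonal blocks of $M$, hence of $M^{-1}$, are $\Ophg(r^{1/2})$. The paper instead factors $\gB=D_rM_r$ one-sidedly and uses Hermitian symmetry.
\item $u_{a\bar b}$ is only $\Ophg(r^{m-1})$, because $\partial\bar\partial u(L_a,\overline{L_b})$ contains the Levi-form term $r_{a\bar b}\overline{L_n}u$. The product with $(\gB)^{a\bar b}=\Ophg(r)$ is still $\Ophg(r^m)$, so your conclusion stands.
\end{enumerate}

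\paragraph{The genuine gap is $n=2$.}
Here $u=\Ophg(r^4)$ rests entirely on the order-$r^3$ cancellation, which you leave open, and the relation you propose to import is the wrong one. For $n=2$ the $r^3$-coefficient of $\phi$ is not locally determined: it is exactly what the localization \eqref{eq:localized-kernel-difference} may change. So it satisfies no universal relation with $\psi|_{r=0}$. It also enters $v$ only at order $r^4$, and, as you note via Remark~\ref{rem:MA-variation-dim2}, contributes nothing to $J(v)$ at order $r^3$.

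What the logarithmic term must cancel is the $r^3$-coefficient of $J(r)$ itself. In dimension two, \eqref{eq:feff-r} only gives $J(r)=1+O(r^3)$, and this coefficient is the obstruction, a CR invariant that cannot be normalized away. So you may not replace $J(r)$ by $1$ at this order.

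The paper's argument runs as follows. Write $v=Cr+r^4\alpha+r^4\beta\log r+\Ophg(r^7)$; the logarithmic perturbation then contributes $-4C^2\beta r^3J(r)$. Two identities of Graham close the argument:
\begin{itemize}
\item $\psi|_{r=0}=-3c_0\eta_1$, so $\beta=C\eta_1+O(r)$;
\item $\bigl((J(r)-1)/r^3\bigr)\big|_{r=0}=4\eta_1$.
\end{itemize}
Together they give $4C^3\eta_1r^3-4C^3\eta_1r^3=0$. You would also need to check that the cross terms between the smooth and logarithmic $r^4$-perturbations are $\Ophg(r^5)$, since Lemma~\ref{lem:MA-variation} treats one perturbation at a time.
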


\begin{proof}
The calculation in this proposition is part of the local scalar-curvature
analysis carried out in our joint work with Sha and Xiao
\cite{EGSXscalarcurv2026}. For smoothly bounded strictly pseudoconvex domains, Engli\v{s}
\cite[Theorem~1]{Englis08} obtained full boundary expansions for the
Bergman invariant, the Bergman metric and its inverse, and the Ricci and
scalar curvatures. After localization, these calculations apply on the
strongly pseudoconvex patch considered here. We require the sharper orders
in \eqref{eq:local-u-orders} and \eqref{eq:local-scalar-orders}, which are
derived below.

We first establish \eqref{eq:local-u-orders}.  Suppose \(n\ge3\).  Let
\[
        a=\frac{n!}{\pi^n},
        \qquad
        C=a^{-1/(n+1)}.
\]
By \cite[Proposition~2.1]{HuangXiaoChengallD}, which records the
boundary coefficient computation of Christoffers
\cite{Christoffers1981}, the smooth coefficient in Fefferman's expansion
for $D$ satisfies
\[
        \phi_D=a+r^2P_2,
        \qquad P_2\in C^\infty.
\]
The localization identity $K-K_D\in C^\infty$ replaces $\phi_D$ by
\[
        \phi=\phi_D+r^{n+1}h
\]
for some $h\in C^\infty$ and leaves the logarithmic coefficient unchanged.
Hence, after absorbing $r^{n-1}h$ into $P_2$, we still have
\[
        \phi=a+r^2P_2,
        \qquad P_2\in C^\infty.
\]
Hence
\[
        v=Cr+r^3a_3+\Ophg(r^4),
        \qquad a_3\in C^\infty.
\]
Here the logarithmic contribution to \(v\) begins only at order
\(r^{n+2}\), which is contained in \(\Ophg(r^4)\) for \(n\ge3\).

Put \(\chi=Cr\).  Writing \(r^3a_3=\eta\chi^3\), with
\(\eta\in C^\infty\), \eqref{eq:J-power-variation} with \(s=3\) gives
\[
        J(\chi+\eta\chi^3)
        =
        \bigl(1+3(n-1)\eta\chi^2\bigr)J(\chi)
        +O(\chi^3).
\]
All quantities on the left and all displayed terms on the right are
polyhomogeneous; consequently the pointwise \(O(\chi^3)\) remainder is
polyhomogeneous of order at least \(3\).  Since \(J(\chi)=C^{n+1}J(r)\),
we obtain
\[
        J(\chi+\eta\chi^3)
        =
        C^{n+1}J(r)+\Ophg(r^2).
\]
The remaining \(\Ophg(r^4)\) term in \(v\) has two-jet
\(\Ophg(r^2)\).  Since \(J\) is a polynomial in the two-jet of its
argument and the two-jet of \(\chi+\eta\chi^3\) is bounded up to the
boundary, this remainder changes \(J\) by only \(\Ophg(r^2)\).  Therefore
\[
        J(v)
        =
        C^{n+1}J(r)+\Ophg(r^2).
\]
Using \(J(r)=1+O(r^{n+1})\) from \eqref{eq:feff-r} and
\(C^{n+1}=\pi^n/n!\), we obtain
\[
        J(v)=\frac{\pi^n}{n!}+\Ophg(r^2).
\]
Equation \eqref{eq:B-J-identity} therefore gives
\[
        \frac{\cB}{\Cn}=1+\Ophg(r^2),
\]
and hence
\[
        u=\Ophg(r^2).
\]

Now suppose \(n=2\).  Write
\[
        c_0=\frac2{\pi^2},
        \qquad
        C=c_0^{-1/3}.
\]
The dimension-two coefficient calculation of Graham gives, in the
normalization used here,
\[
        \phi=c_0+O(r^3),
        \qquad
        \psi=-3c_0\eta_1+O(r),
\]
where \(\eta_1\) is normalized by a formal solution \(J(\rho)=1\)
with expansion
\[
        \rho\sim r\sum_{k\ge0}\eta_k(r^3\log r)^k,
        \qquad \eta_0=1+O(r^3).
\]
Only the boundary value of \(\eta_1\) is used; we choose a smooth
extension to the collar. See
\cite[p.~130 and equation~(3.1)]{GRA3_Graham_ScalarBoundaryInvariantsAndThe}
and \cite[Theorem~2.11 and equation~(4.11)]{GrahamhigherasympMongeAmpere}.
The smooth localization error changes \(\phi\) only by \(r^3C^\infty\)
and does not change \(\psi\).  Since \(\phi\) is smooth, Hadamard's lemma
gives a smooth function \(a\) such that
\[
        \phi=c_0+r^3a.
\]
Thus
\[
        K=r^{-3}\bigl(c_0+r^3a+r^3\psi\log r\bigr).
\]
Set
\[
        T=c_0^{-1}r^3(a+\psi\log r).
\]
Then
\[
\begin{aligned}
        v
        &=Cr(1+T)^{-1/3}\\
        &=Cr-\frac13Cc_0^{-1}r^4a
          -\frac13Cc_0^{-1}r^4\psi\log r
          +\Ophg(r^7).
\end{aligned}
\]
Hence
\[
        v
        =
        Cr+r^4\alpha+r^4\beta\log r+\Ophg(r^7),
        \qquad
        \alpha,\beta\in C^\infty,
\]
where
\[
        \beta
        =
        -\frac13Cc_0^{-1}\psi
        =
        C\eta_1+O(r).
\]

Put again \(\chi=Cr\).  Since
\[
        r^4\beta\log r
        =
        C^{-4}\beta\chi^4\log\chi
        -
        C^{-4}\beta(\log C)\chi^4,
\]
the constant \(\log C\) only changes the smooth \(\chi^4\)-coefficient.
Thus Remark~\ref{rem:MA-variation-dim2} shows that the smooth
\(r^4\)-perturbation has no contribution of order \(r^3\), whereas the
logarithmic perturbation contributes
\[
        -4C^2\beta r^3J(r).
\]

To combine the two finite perturbations, expand \(J(\chi+h)\) into its
part linear in the two-jet of \(h\) and terms containing at least two
entries from that two-jet, where
\[
        h=r^4\alpha+r^4\beta\log r.
\]
The linear part is the sum of the smooth and logarithmic contributions just
described.  Moreover,
\[
        h=\Ophg(r^4),\qquad
        dh=\Ophg(r^3),\qquad
        \partial\bar\partial h=\Ophg(r^2).
\]
Every determinant monomial containing at least two entries from the
perturbation two-jet is $\Ophg(r^5)$. For terms with exactly two
perturbed entries, two Hessian entries must be accompanied by the
background $(0,0)$ entry $\chi=O(r)$, giving $\Ophg(r^{1+2+2})$.
One perturbed Hessian entry and one perturbed first-derivative entry
give $\Ophg(r^{2+3})$, since the remaining background entry is bounded.
All other terms with exactly two perturbed entries have higher order.
Terms with three perturbed entries are $\Ophg(r^8)$. The \(\Ophg(r^7)\) remainder in \(v\) has
two-jet \(\Ophg(r^5)\) and hence changes \(J\) only by
\(\Ophg(r^5)\).  It follows that
\[
        J(v)
        =
        C^3J(r)-4C^2\beta r^3J(r)+\Ophg(r^4).
\]
Since $r$ is a Fefferman defining function, the quotient
\[
        q_r:=\frac{J(r)-1}{r^3}
\]
extends smoothly to the boundary.  Graham's equation~(4.11), specialized
to $n=2$, gives
\[
        q_r\big|_{r=0}=4\eta_1\big|_{r=0};
\]
see \cite[equation~(4.11)]{GrahamhigherasympMongeAmpere}.  Since
$\eta_1$ is smooth, Hadamard's lemma therefore yields
\[
        J(r)=1+4\eta_1r^3+\Ophg(r^4).
\]
Hence the order-$r^3$ terms cancel:
\[
\begin{aligned}
        J(v)
        &=
        C^3\bigl(1+4\eta_1r^3\bigr)
        -4C^2\bigl(C\eta_1+O(r)\bigr)r^3
        +\Ophg(r^4)\\
        &=
        C^3+\Ophg(r^4)
        =
        \frac{\pi^2}{2}+\Ophg(r^4).
\end{aligned}
\]
Since \(\cB=9J(v)\) and \(C_2=9\pi^2/2\), we obtain
\[
        \frac{\cB}{C_2}=1+\Ophg(r^4),
        \qquad
        u=\Ophg(r^4).
\]
This proves \eqref{eq:local-u-orders}.

We next prove the inverse-metric estimates.  Write
\[
        F:=r^{n+1}K
        =\phi+\psi r^{n+1}\log r.
\]
Then \(F\) is polyhomogeneous and has a positive smooth boundary value.
Since
\[
        \gB
        =
        (n+1)\partial\bar\partial(-\log r)
        +\partial\bar\partial\log F,
\]
and
\[
        \partial\bar\partial(-\log r)
        =
        \frac{\partial r\otimes\bar\partial r}{r^2}
        -\frac{\partial\bar\partial r}{r},
\]
the adapted-frame matrix of \(\gB\) has the block form
\[
        \gB=
        \begin{pmatrix}
        r^{-1}A+\Ophg(r^0)
        &
        r^{-1}B+\Ophg(r^0)
        \\[3pt]
        r^{-1}B^*+\Ophg(r^0)
        &
        (n+1)r^{-2}+r^{-1}c+\Ophg(r^0)
        \end{pmatrix},
\]
where \(A\) is a smooth positive definite
\((n-1)\times(n-1)\) matrix and \(B,c\) are smooth up to the boundary.
Factoring the preceding matrix as
\[
        \gB=D_rM_r,
        \qquad
        D_r=
        \begin{pmatrix}
        r^{-1}I_{n-1}&0\\
        0&r^{-2}
        \end{pmatrix},
\]
the matrix \(M_r\) is polyhomogeneous up to the boundary and has the
invertible boundary value
\[
        \left.M_r\right|_{r=0}=
        \begin{pmatrix}
        A|_{r=0}&B|_{r=0}\\
        0&n+1
        \end{pmatrix}.
\]
Hence \(M_r^{-1}=\Ophg(r^0)\), and
\[
        (\gB)^{-1}
        =
        M_r^{-1}
        \begin{pmatrix}
        rI_{n-1}&0\\
        0&r^2
        \end{pmatrix}.
\]
Using also the Hermitian symmetry of \((\gB)^{-1}\), we obtain
\[
        (\gB)^{a\bar b}=\Ophg(r),\qquad
        (\gB)^{a\bar n}=\Ophg(r^2),\qquad
        (\gB)^{n\bar a}=\Ophg(r^2),\qquad
        (\gB)^{n\bar n}=\Ophg(r^2),
\]
which proves \eqref{eq:local-inverse-orders}.

Finally, by \eqref{eq:scalar-u-identity},
\[
        f=-\Delta_{\gB}u.
\]
For later use, we record explicitly how the complex Hessian acts on the
polyhomogeneous scale.  Since the adapted frame is smooth and
nondegenerate up to \(r=0\), for each \(i,j\) we have
\[
        \partial\bar\partial w(L_i,\overline{L_j})
        =
        L_i\overline{L_j}w-\mathcal L_{i\bar j}w,
\]
where \(\mathcal L_{i\bar j}\) is a first-order differential operator
with smooth coefficients and no zero-order term.  Since every smooth
first-order vector field is a smooth linear combination of the real
adapted fields, if \(w=\Ophg(r^m)\), then
\[
        \mathcal L_{i\bar j}w=\Ophg(r^{m-1}).
\]
Together with the differentiation rules
\eqref{eq:phg-calculus-rules}, this gives
\[
\begin{aligned}
        w_{a\bar b}&=\Ophg(r^{m-1}),\\
        w_{a\bar n},\,w_{n\bar a}&=\Ophg(r^{m-1}),\\
        w_{n\bar n}&=\Ophg(r^{m-2}),
        \qquad a,b<n.
\end{aligned}
\]
Consequently, using \eqref{eq:local-inverse-orders},
\[
\begin{aligned}
        (\gB)^{a\bar b}w_{a\bar b}
        &=\Ophg(r)\Ophg(r^{m-1})
          =\Ophg(r^m),\\
        (\gB)^{a\bar n}w_{a\bar n}
        +(\gB)^{n\bar a}w_{n\bar a}
        &=\Ophg(r^{m+1})
          \subset\Ophg(r^m),\\
        (\gB)^{n\bar n}w_{n\bar n}
        &=\Ophg(r^2)\Ophg(r^{m-2})
          =\Ophg(r^m).
\end{aligned}
\]
Hence
\[
        \Delta_{\gB}\Ophg(r^m)=\Ophg(r^m).
\]
Applying this to \eqref{eq:local-u-orders} yields
\[
        f=\Ophg(r^2)\quad(n\ge3),
        \qquad
        f=\Ophg(r^4)\quad(n=2).
\]
Therefore
\[
\begin{array}{lll}
n\ge3:
&
\overline{L_n}f=\Ophg(r),
&
\overline{L_a}f=\Ophg(r^2),
\\[3pt]
n=2:
&
\overline{L_n}f=\Ophg(r^3),
&
\overline{L_a}f=\Ophg(r^4).
\end{array}
\]
In particular, in every dimension \(n\ge2\),
\[
        \overline{L_n}f=\Ophg(r),
        \qquad
        \overline{L_a}f=\Ophg(r^2),
        \qquad a<n,
\]
which proves \eqref{eq:local-scalar-derivative-orders}.
\end{proof}

\section{Extremal Bergman metrics}\label{sec:extremal-proof}

In this section, we give a proof of Theorem \ref{thm:extremal-csck}. Recall that $\gB$ is extremal if
\begin{equation}\label{eq:extremal}
        \mathcal X:=\nabla_{\gB}^{1,0}\Scal(\gB)
        =(\gB)^{\alpha\bar\beta}
          \partial_{\bar\beta}\Scal(\gB)
          \frac{\partial}{\partial z_\alpha}
\end{equation}
is a holomorphic \((1,0)\)-vector field.

\begin{lemma}\label{lem:X-decays}
Suppose that $\gB$ is extremal on the fixed strongly pseudoconvex
collar over $M_0$.  Then we have
\begin{equation}\label{eq:X-decays}
        \mathcal X
        =\Ophg(r^3)L_n
          +\sum_{a=1}^{n-1}\Ophg(r^3)L_a.
\end{equation}
In particular, the components of $\mathcal X$ in Euclidean coordinates are bounded and tend to zero at the boundary patch.
\end{lemma}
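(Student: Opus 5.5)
Since $\Scal(\gB)=f-n$ and $n$ is constant, we have $\partial_{\bar\beta}\Scal(\gB)=\partial_{\bar\beta}f$. So $\mathcal X$ is the $(1,0)$-gradient of $f$, and the whole task is to express it in the adapted frame and read off the orders from Proposition~\ref{prop:local-boundary-asymptotics}. Extremality plays no role in the estimate itself; it is only used afterwards, to know that $\mathcal X$ is holomorphic.

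\emph{Step 1: the frame expression.} The inverse metric is a contravariant tensor, so in any frame
\[
\mathcal X=\sum_{i,j}(\gB)^{i\bar j}\,\bigl(\overline{L_j}f\bigr)\,L_i ,
\]
where $(\gB)^{i\bar j}$ is the inverse of the matrix $\gB(L_i,\overline{L_j})$. This is exactly the adapted-frame inverse used in \eqref{eq:local-inverse-orders}. The identity is frame-independent because $\bar\partial f=\sum_j(\overline{L_j}f)\,\overline{\theta^j}$, where $\theta^j$ is the coframe dual to $L_j$, and raising the index with $(\gB)^{i\bar j}$ gives the expression above.

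\emph{Step 2: the $L_n$-component.} Its coefficient is
\[
(\gB)^{n\bar n}\,\overline{L_n}f+\sum_{b<n}(\gB)^{n\bar b}\,\overline{L_b}f .
\]
By \eqref{eq:local-inverse-orders} and \eqref{eq:local-scalar-derivative-orders}, the first term is $\Ophg(r^2)\Ophg(r)=\Ophg(r^3)$. The second is $\Ophg(r^2)\Ophg(r^2)=\Ophg(r^4)$.

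\emph{Step 3: the $L_a$-components.} For $a<n$ the coefficient is
\[
\sum_{b<n}(\gB)^{a\bar b}\,\overline{L_b}f+(\gB)^{a\bar n}\,\overline{L_n}f .
\]
Here the first sum is $\Ophg(r)\Ophg(r^2)=\Ophg(r^3)$ and the second term is $\Ophg(r^2)\Ophg(r)=\Ophg(r^3)$. Together with Step 2 this proves \eqref{eq:X-decays}, using that products add orders.

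\emph{Step 4: Euclidean components.} Each $L_i$ is a combination of the $\partial/\partial z_\alpha$ with coefficients smooth up to the boundary patch; these coefficients are built from $r_\alpha$ and $1/r_n$, and $r_n\ne0$ on $U$. Hence each Euclidean component of $\mathcal X$ lies in $\Ophg(r^3)$. By \eqref{eq:phg-to-big-O} it is then $O(r^{3-\delta})$, so it is bounded and tends to zero as $r\to0$, uniformly over $M_0$.

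There is no serious obstacle. The only point needing care is Step 1: one must use the inverse of the adapted-frame matrix (not the Euclidean inverse) together with frame derivatives of $f$. This is the form in which \eqref{eq:local-inverse-orders} was established, because the unimodular change of frame was built into the block computation there.
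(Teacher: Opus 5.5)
Your proposal is correct and follows the paper's proof essentially line by line. Like the paper, you write $\mathcal X=(\gB)^{A\bar B}\,\overline{L_B}f\,L_A$ in the adapted frame, multiply the inverse-metric orders \eqref{eq:local-inverse-orders} by the derivative orders \eqref{eq:local-scalar-derivative-orders} to get $\Ophg(r^3)$ coefficients, and pass to Euclidean components using the smooth, nondegenerate frame. Your closing remark about the unimodular change of frame is unnecessary, since the inverse-metric orders come only from the block factorization, but it does no harm.
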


\begin{proof}
Set \(f=\Scal(\gB)+n\).  By
Proposition~\ref{prop:local-boundary-asymptotics},
\[
        \overline{L_n}f=\Ophg(r),
        \qquad
        \overline{L_a}f=\Ophg(r^2),
        \qquad a<n,
\]
and
\[
\begin{aligned}
        (\gB)^{a\bar b}&=\Ophg(r),&
        (\gB)^{a\bar n}&=\Ophg(r^2),\\
        (\gB)^{n\bar a}&=\Ophg(r^2),&
        (\gB)^{n\bar n}&=\Ophg(r^2).
\end{aligned}
\]
In the adapted frame,
\[
        \mathcal X
        =
        (\gB)^{A\bar B}\overline{L_B}f\,L_A.
\]
Thus the coefficient of \(L_n\) satisfies
\[
\begin{aligned}
        (\gB)^{n\bar B}\overline{L_B}f
        &=
        (\gB)^{n\bar n}\overline{L_n}f
        +(\gB)^{n\bar b}\overline{L_b}f\\
        &=
        \Ophg(r^2)\Ophg(r)
        +\Ophg(r^2)\Ophg(r^2)
        =
        \Ophg(r^3),
\end{aligned}
\]
while, for \(a<n\),
\[
\begin{aligned}
        (\gB)^{a\bar B}\overline{L_B}f
        &=
        (\gB)^{a\bar n}\overline{L_n}f
        +(\gB)^{a\bar b}\overline{L_b}f\\
        &=
        \Ophg(r^2)\Ophg(r)
        +\Ophg(r)\Ophg(r^2)
        =
        \Ophg(r^3).
\end{aligned}
\]
This proves \eqref{eq:X-decays}.  Since the adapted frame \(L_A\) is smooth
and nondegenerate up to the boundary, the Euclidean components of
\(\mathcal X\) have the same decay and hence tend to zero on the boundary
patch.
\end{proof}

\begin{lemma}\label{lem:boundary-uniqueness}
Let \(\mathcal D\subset\C^n\) be a domain with a \(C^1\)-smooth boundary patch
\(M\), and let \(U\) be a neighborhood of a point of \(M\).  Suppose that
\(h\in\mathcal O(\mathcal D\cap U)\) and
\[
        \lim_{\substack{z\to p\\z\in \mathcal D\cap U}}h(z)=0
        \qquad (p\in M\cap U).
\]
After shrinking \(U\), the function \(h\) vanishes on the component of
\(\mathcal D \cap U\) adjacent to \(M\).
\end{lemma}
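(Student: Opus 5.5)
This is a boundary-uniqueness statement of Privalov type. I would reduce it to the one-variable fact that a holomorphic function on a planar domain, with boundary value zero along an arc, vanishes identically. The reduction slices $\mathcal D\cap U$ by complex lines transverse to $M$.

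\emph{Local coordinates.} Fix $p_0\in M\cap U$. After a complex affine change of coordinates, assume $p_0=0$ and that the inner normal to $M$ at $0$ is the direction of $\operatorname{Re}z_n$. Since $M$ is $C^1$, after shrinking $U$ to a small polydisc-like box, $\mathcal D\cap U$ contains the region
\[
W=\{(z',z_n): |z'|<\delta,\ |\operatorname{Im}z_n|<\delta,\ \rho(z',\operatorname{Im}z_n)<\operatorname{Re}z_n<\delta\}.
\]
Here $M\cap U$ is the graph $\operatorname{Re}z_n=\rho(z',\operatorname{Im}z_n)$ with $\rho\in C^1$, $\rho(0)=0$, $d\rho(0)=0$. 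Also $W$ is the component of $\mathcal D\cap U$ adjacent to $M$, after choosing $U=\{|z'|<\delta,|\operatorname{Im}z_n|<\delta,|\operatorname{Re}z_n|<\delta\}$ with $\delta$ small so that $\sup|\rho|<\delta/2$.

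\emph{Slicing.} For fixed $z'$ with $|z'|<\delta$, consider the slice
\[
\omega_{z'}=\{z_n: |\operatorname{Im}z_n|<\delta,\ \rho(z',\operatorname{Im}z_n)<\operatorname{Re}z_n<\delta\}.
\]
This is a simply connected planar Jordan domain whose boundary contains the $C^1$ arc $\gamma_{z'}=\{\operatorname{Re}z_n=\rho(z',\operatorname{Im}z_n)\}$. The function $h_{z'}(z_n)=h(z',z_n)$ is holomorphic on $\omega_{z'}$. Every point of $\gamma_{z'}$ is a point $(z',z_n)\in M\cap U$, and limits within the slice are limits within $\mathcal D\cap U$. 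Hence $h_{z'}$ extends continuously by $0$ to $\gamma_{z'}$.

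\emph{One-variable vanishing.} I would then show $h_{z'}\equiv0$. Take a small open disc $\Delta$ centered at an interior point of $\gamma_{z'}$, and let $\Delta^+=\Delta\cap\omega_{z'}$. Define $H=h_{z'}$ on $\Delta^+$ and $H=0$ on $\Delta\setminus\Delta^+$. Then $H$ is continuous on $\Delta$ and holomorphic off the $C^1$ arc $\gamma_{z'}\cap\Delta$. By the Morera-type removability of rectifiable arcs for continuous functions, $H$ is holomorphic on $\Delta$. To justify this, split each triangle along the arc and approximate the arc from inside by nearby curves; continuity gives convergence of the integrals. Since $H$ vanishes on an open set, $H\equiv0$ on $\Delta$. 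By the identity theorem on the connected domain $\omega_{z'}$, $h_{z'}\equiv0$.

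Doing this for every $z'$ gives $h\equiv0$ on $W$, which proves the lemma. I expect the only delicate point to be the continuity-up-to-the-arc hypothesis in the Morera step. The assumption in the lemma is a pointwise limit at each $p\in M\cap U$, so continuity of the extension by zero on $\overline{\Delta^+}\cap\gamma_{z'}$ follows directly. The uniformity needed for Morera then comes from continuity of $H$ on the compact closure of a slightly smaller disc. A simpler alternative, if one prefers, is the Schwarz reflection route: conformally map $\Delta^+$ to a half-disc via Carathéodory, and note that $\operatorname{Re}$ and $\operatorname{Im}$ of the pulled-back function extend by odd reflection. In either approach no regularity beyond $C^1$, and really only continuity plus rectifiability, is used.
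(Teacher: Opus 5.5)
Your proposal is correct and follows essentially the same route as the paper: slice by complex lines in the \(z_n\)-direction, extend each slice function by zero across the \(C^1\) arc, show the extension is holomorphic, and conclude with the identity theorem. The only difference is the removability step. The paper proves it with a distributional \(\bar\partial\) argument, using cutoffs \(\kappa_\epsilon\) that vanish on the strip \(|\rho|\le\epsilon\) (a strip of area \(O(\epsilon)\), with \(|d\kappa_\epsilon|\le C/\epsilon\)); this avoids the triangle-splitting details your Morera sketch passes over, and your Carath\'eodory--Schwarz reflection alternative is equally valid.
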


\begin{proof}
After a complex-affine change of coordinates and a shrinking of $U$, write
$z=(z',z_n)$, where $z'\in\C^{n-1}$ and $z_n=x_n+iy_n$, so that
\[
        M=\{x_n=\sigma(z',y_n)\}
\]
with $\sigma\in C^1$, and the component of $\mathcal D\cap U$ adjacent to
$M$ lies, say, in $\{x_n<\sigma(z',y_n)\}$.

Fix $z'$ in a sufficiently small open set.  On a small rectangle $R$ in
the $z_n$-plane, the slice of $M$ is the $C^1$ arc
\[
        \Gamma_{z'}
        =\{x_n+iy_n:x_n=\sigma(z',y_n)\},
\]
which separates $R$ into two sides $R_-$ and $R_+$, with $R_-$ contained
in the $\mathcal D$-side.  Set
\[
        f_{z'}(\zeta)=h(z',\zeta),\qquad \zeta\in R_-,
\]
and define $F_{z'}$ on $R$ by $F_{z'}=f_{z'}$ on $R_-$ and
$F_{z'}=0$ on $\Gamma_{z'}\cup R_+$.  The boundary-limit hypothesis makes
$F_{z'}$ continuous on $R$.

We claim that $F_{z'}$ is holomorphic on $R$. Put
$\rho(\zeta)=x_n-\sigma(z',y_n)$, and let $\kappa_\epsilon$ be a
$C^1$ cutoff equal to zero for $|\rho|\le\epsilon$ and one for
$|\rho|\ge2\epsilon$, with $|d\kappa_\epsilon|\le C/\epsilon$.
For $\eta\in C_c^\infty(R)$, integration by parts away from the arc gives
\[
 \int_R\kappa_\epsilon F_{z'}\,\bar\partial\eta
 =-\int_R F_{z'}\eta\,\bar\partial\kappa_\epsilon.
\]
On the support of $\eta$, the strip $|\rho|<2\epsilon$ has area
$O(\epsilon)$, and the supremum of $|F_{z'}|$ on that strip tends
to zero. The right-hand side therefore tends to zero. Passing to
the limit shows that $\bar\partial F_{z'}=0$ in distributions.
Since $F_{z'}$ is continuous, it is holomorphic on $R$.

Since $F_{z'}$ vanishes on the nonempty open set $R_+$, the one-variable
identity theorem yields $F_{z'}\equiv0$ on $R$.  Hence $h(z',\cdot)$
vanishes on the adjacent part of the slice.  Allowing $z'$ to vary over a
small open set shows that $h$ vanishes on a nonempty open subset of the
component of $\mathcal D\cap U$ adjacent to $M$.  The
identity theorem then gives the asserted vanishing on that component.
\end{proof}

\begin{corollary}\label{cor:X-zero-collar}
Suppose that $\gB$ is extremal on the fixed strongly pseudoconvex
collar over $M_0$.  Then, after replacing $M_0$ by a smaller nonempty
relatively compact open patch and shrinking the collar,
\[
        \mathcal X\equiv0
\]
on that collar.
\end{corollary}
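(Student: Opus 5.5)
The plan is to apply Lemma~\ref{lem:boundary-uniqueness} to each Euclidean component of $\mathcal X$ and then propagate the vanishing to the whole collar by the identity theorem.

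\textbf{Holomorphic components.} Write
\[
        \mathcal X=\sum_{\alpha=1}^n X^\alpha\,\frac{\partial}{\partial z_\alpha},
        \qquad
        X^\alpha=(\gB)^{\alpha\bar\beta}\partial_{\bar\beta}\Scal(\gB).
\]
Extremality on the collar means that each $X^\alpha$ is a holomorphic function on the collar $\{0<r<\varepsilon\}$ over $M_0$. This collar is an open subset of $\om\cap U$.

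\textbf{Boundary limits.} By Lemma~\ref{lem:X-decays}, the coefficients of $\mathcal X$ in the adapted frame are $\Ophg(r^3)$. The adapted frame $L_1,\dots,L_n$ is smooth and nondegenerate up to $r=0$. Hence each Euclidean component satisfies $X^\alpha=\Ophg(r^3)$. By \eqref{eq:phg-to-big-O}, this gives $|X^\alpha|\le C r^{3-\delta}$ uniformly over the relatively compact patch $M_0$. Consequently $X^\alpha(z)\to0$ as $z\to p$, for every $p\in M_0$. Here $z$ is taken in the collar, which coincides with $\om$ near $p$ because $U\cap\om=U\cap D=\{r>0\}$.

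\textbf{Applying Lemma~\ref{lem:boundary-uniqueness}.} Fix $p\in M_0$. Take $\mathcal D=\om$ and let the boundary patch be $M_0$, which is smooth, in particular $C^1$. Choose a small neighborhood $U'$ of $p$ with $\om\cap U'$ contained in the collar. The lemma then yields $X^\alpha\equiv0$ on the component of $\om\cap U'$ adjacent to $M_0$, after shrinking $U'$. Replace $M_0$ by the nonempty relatively compact patch $M_0\cap U'$ and shrink the collar to lie in that adjacent component. On this collar we get $\mathcal X\equiv0$.

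If one prefers to keep more of the collar, the identity theorem extends $X^\alpha\equiv0$ to every connected component of the original collar that meets this set. The statement only asks for a smaller patch, so this is unnecessary.

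\textbf{Main obstacle.} Essentially nothing substantive remains, since the decay was established in Lemma~\ref{lem:X-decays}. The one point needing care is bookkeeping: the pointwise limit must hold uniformly up to each boundary point of the patch, including tangential approach within the collar. This follows from the uniformity of the polyhomogeneous estimates over the relatively compact subpatch. One must also check that the one-sided neighborhood used in Lemma~\ref{lem:boundary-uniqueness} lies in the region where extremality, and hence holomorphy of $X^\alpha$, is assumed. Both are arranged by shrinking $U'$.
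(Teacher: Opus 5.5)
Your proposal is correct and follows the paper's proof: you write $\mathcal X$ in Euclidean components, use extremality for holomorphy, use Lemma~\ref{lem:X-decays} for the vanishing boundary limits, and apply Lemma~\ref{lem:boundary-uniqueness} componentwise. Your extra bookkeeping (choosing $U'$ so that $\om\cap U'$ lies in the collar where holomorphy holds, and using uniformity over the relatively compact patch) only makes explicit what the paper leaves implicit.
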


\begin{proof}
In local Euclidean coordinates, write
\[
        \mathcal X=\sum_{\alpha=1}^n X^\alpha
        \frac{\partial}{\partial z_\alpha}.
\]
Since $\gB$ is extremal, the vector field $\mathcal X$ is holomorphic, so
its Euclidean coefficients $X^\alpha$ are holomorphic.  By
Lemma~\ref{lem:X-decays}, every $X^\alpha$ tends to zero at the boundary
patch.  Lemma~\ref{lem:boundary-uniqueness}, applied componentwise, gives
$X^\alpha\equiv0$ on the adjacent one-sided collar.  Hence
$\mathcal X\equiv0$ there.
\end{proof}

\begin{proof}[Proof of Theorem~\ref{thm:extremal-csck}]
By Corollary~\ref{cor:X-zero-collar}, the holomorphic vector field
\[
        \mathcal X=\nabla_{\gB}^{1,0}\Scal(\gB)
\]
vanishes on a nonempty one-sided collar.  Its Euclidean coordinate
components are holomorphic on the connected domain $\om$, so the
holomorphic identity theorem gives
\[
        \mathcal X\equiv0
        \qquad\text{on }\om.
\]
Since the matrix $((\gB)^{\alpha\bar\beta})$ is invertible,
\eqref{eq:extremal} then implies
\[
        \bar\partial\Scal(\gB)=0
        \qquad\text{on }\om.
\]
The scalar curvature is real-valued, hence it is constant on $\om$.
Finally, Proposition~\ref{prop:local-boundary-asymptotics} gives
\[
        \Scal(\gB)+n\longrightarrow0
        \qquad\text{as }r\to0
\]
along the strongly pseudoconvex patch.  Therefore, the constant value is
$-n$, and
\[
        \Scal(\gB)\equiv-n
        \qquad\text{on }\om.\qedhere
\]
\end{proof}

\section{Constant scalar curvature: harmonic reduction and localization}
\label{sec:csck-boundary}

\subsection{The harmonic equation for the Bergman invariant}

\begin{lemma}\label{lem:scalar-pde}
Suppose that $\Scal(\gB)$ is constant on the fixed strongly
pseudoconvex collar over $M_0$.  Then this constant is $-n$, and
\begin{equation}\label{eq:harmonic-u}
        \Delta_{\gB}\log\frac{\cB}{\Cn}=0
\end{equation}
on that collar.
\end{lemma}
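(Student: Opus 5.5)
The proof has two steps, both resting on results already established in Section~\ref{sec:prelim}.

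\emph{Step one: identify the constant.} Suppose $\Scal(\gB)\equiv c$ on the collar over $M_0$, and set $f=\Scal(\gB)+n=c+n$, which is a constant. Proposition~\ref{prop:local-boundary-asymptotics} gives $f=\Ophg(r^2)$ for $n\ge3$ and $f=\Ophg(r^4)$ for $n=2$. By \eqref{eq:phg-to-big-O}, in either case $f=O(r^{2-\delta})$ for every $\delta>0$, so $f\to0$ as $r\to0$ along the patch. A constant that tends to zero must vanish, so $c=-n$.

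\emph{Step two: the harmonic equation.} Identity \eqref{eq:scalar-u-identity} states that
\[
\Scal(\gB)+n=-\Delta_{\gB}u,\qquad u=\log\frac{\cB}{\Cn},
\]
pointwise wherever $\gB$ is defined, and in particular on the collar. This identity was derived from $G=\cB K$: it gives $\Ric(\gB)=-\ddbar\log\cB-\gB$, and tracing with respect to $\gB$ produces $\tr_{\gB}\gB=n$. Substituting $\Scal(\gB)=-n$ from step one yields $\Delta_{\gB}u=0$, which is \eqref{eq:harmonic-u}.

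I do not expect a real obstacle. The only points to check are that $\cB>0$ on the collar, so that $u$ is defined and smooth, and that the limit in step one is taken within the collar. The first holds because $\cB=G/K$, where $G>0$ since $\gB$ is positive definite and $K>0$ on the bounded domain (and near the patch it also follows from \eqref{eq:local-fefferman}). The second is exactly the region where the asymptotics of Proposition~\ref{prop:local-boundary-asymptotics} are valid.
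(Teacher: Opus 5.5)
Your proposal is correct and follows the paper's proof essentially verbatim. The boundary asymptotics of Proposition~\ref{prop:local-boundary-asymptotics} force the constant $\Scal(\gB)+n$ to tend to zero at the patch, so it vanishes, and then \eqref{eq:scalar-u-identity} gives $\Delta_{\gB}\log(\cB/\Cn)=0$ on the collar. Your extra remark that $\cB>0$ there, so that $u$ is defined, is harmless and consistent with the paper's setup.
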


\begin{proof}
By Proposition~\ref{prop:local-boundary-asymptotics},
\[
        \Scal(\gB)(z)\longrightarrow-n
        \qquad\text{as }z\to p\in M_0.
\]
Hence the constant value of $\Scal(\gB)$ on the collar is $-n$.
Equation~\eqref{eq:scalar-u-identity} therefore gives
\[
        \Delta_{\gB}\log\frac{\cB}{\Cn}=0
\]
on the collar.
\end{proof}

\subsection{Kernel localization and the Monge--Amp\`ere determinant}

\begin{lemma}\label{lem:local-error}
Let \(D\) be the strongly pseudoconvex domain from
Subsection~\ref{subsec:localized-patch}, and write
\[
        K=K_D+\Phi,\qquad \Phi\in C^\infty,
\]
near the common boundary patch.  Put
\[
        v=K^{-1/(n+1)},\qquad v_D=K_D^{-1/(n+1)}.
\]
Then
\[
        v-v_D=r^{n+2}a+\Ophg(r^{2n+3})
\]
for some \(a\in C^\infty\).  Moreover,
\[
        J(v)-J(v_D)=O(r^{n+2}).
\]
\end{lemma}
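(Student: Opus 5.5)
We write $K=K_D(1+\Phi/K_D)$ and expand the power $-1/(n+1)$; the $J$ estimate then follows from a crude jet-counting bound.

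\emph{Step 1: expansion of $v$.} By Fefferman's expansion for $K_D$, we have $K_D=r^{-(n+1)}F_D$ with $F_D=\phi_D+\psi_D r^{n+1}\log r$. This $F_D$ is polyhomogeneous with positive smooth boundary value, so $F_D^{-1}=\Ophg(r^0)$ and
\[
        \frac{\Phi}{K_D}=r^{n+1}\Phi F_D^{-1}=\Ophg(r^{n+1}).
\]
Hence
\[
        v=v_D\Bigl(1+\frac{\Phi}{K_D}\Bigr)^{-1/(n+1)}
        =v_D\Bigl(1-\frac1{n+1}\frac{\Phi}{K_D}+\Ophg(r^{2n+2})\Bigr).
\]
Moreover, $v_D=rF_D^{-1/(n+1)}$, so
\[
        v_Dr^{n+1}\Phi F_D^{-1}=r^{n+2}\Phi F_D^{-(n+2)/(n+1)}.
\]
Expanding $F_D^{-(n+2)/(n+1)}=\phi_D^{-(n+2)/(n+1)}+\Ophg(r^{n+1})$ (the logarithmic correction enters only at order $r^{n+1}\log r$), we get
\[
        v-v_D=r^{n+2}a+\Ophg(r^{2n+3}),
        \qquad a=-\frac1{n+1}\Phi\,\phi_D^{-(n+2)/(n+1)}\in C^\infty.
\]
The quadratic term $v_D\,\Ophg(r^{2n+2})$ contributes only $\Ophg(r^{2n+3})$, since $v_D=O(r)$. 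This yields the first claim. The one point requiring care is to keep the log terms in the remainder and not in $a$. This works because every appearance of $\log r$ is multiplied by at least $r^{n+1}$ relative to the leading term, so it lands at order $r^{2n+3}\log r$.

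\emph{Step 2: the Monge--Amp\`ere difference.} Put $h=v-v_D$, so that $h=\Ophg(r^{n+2})$. By the calculus rules \eqref{eq:phg-calculus-rules}, with the Hessian losing at most two orders as in the proof of Proposition~\ref{prop:local-boundary-asymptotics},
\[
        h=\Ophg(r^{n+2}),\qquad dh=\Ophg(r^{n+1}),\qquad \partial\bar\partial h=\Ophg(r^{n}).
\]
A Hessian order of $r^n$ alone would only give $J(v)-J(v_D)=O(r^{n})$, which is too weak. To gain the missing two orders we use the structure of the smooth term instead.

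For the leading part $r^{n+2}a$ with $a$ smooth, we apply Lemma~\ref{lem:MA-variation} with $\chi=v_D$ and $s=n+2$. Since $v_D=\chi$ is a positive multiple of $r$ times a smooth factor modulo logs, we can write $r^{n+2}a=\eta\chi^{n+2}$ with $\eta$ bounded and with bounded two-jet, up to polyhomogeneous corrections of order $r^{2n+2}\log r$. Formula \eqref{eq:J-power-variation} then gives
\[
        J(\chi+\eta\chi^{n+2})=\bigl(1+(n+2)\cdot 0\cdot\eta\chi^{n+1}\bigr)J(\chi)+O(\chi^{n+2}),
\]
because $n+2-s=0$. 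Thus this part changes $J$ only by $O(r^{n+2})$.

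For the remainder $\Ophg(r^{2n+3})$, its two-jet is $\Ophg(r^{2n+1})$. Since $J$ is a polynomial in two-jets and the two-jet of $\chi+\eta\chi^{n+2}$ is bounded, this remainder changes $J$ by $\Ophg(r^{2n+1})\subset O(r^{n+2})$, using \eqref{eq:phg-to-big-O} and $n\ge2$.

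\emph{Main obstacle.} The hard step is justifying Lemma~\ref{lem:MA-variation} when $\chi=v_D$ is only polyhomogeneous rather than $C^2$ up to the boundary: its two-jet contains terms like $r^{n+1}\log r$, whose second derivatives are of order $r^{n-1}\log r$. For $n\ge2$ these are continuous up to $r=0$, and only pointwise $O(\cdot)$ control of the remainder is claimed, so I expect the lemma's proof (a determinant expansion in which the cancellation comes from the factor $s(n+2-s)$) to go through verbatim. Alternatively, one can run the computation with $\chi=Cr\cdot(\text{smooth})$ and treat the log part of $v_D$ as a further small perturbation, using the same two-jet counting.
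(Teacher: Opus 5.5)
This is essentially the paper's proof: it uses the same expansion of $(1+\Phi/K_D)^{-1/(n+1)}$, the same application of Lemma~\ref{lem:MA-variation} with $\chi=v_D$ at the critical exponent $s=n+2$, and the same two-jet count for the $\Ophg(r^{2n+3})$ remainder. The obstacle you flag is not one, because Lemma~\ref{lem:MA-variation} only asks for $C^2$ data, and $r^{n+1}\log r$ is $C^2$ for $n\ge2$ (your own observation). Hence both $v_D$ and $\eta=a(r/v_D)^{n+2}=a\bigl(\phi_D+\psi_D r^{n+1}\log r\bigr)^{(n+2)/(n+1)}$ are $C^2$, so $r^{n+2}a=\eta v_D^{n+2}$ holds exactly with no correction term, which is how the paper does it. If you instead use a smooth $\eta$, the correction is $\Ophg(r^{2n+3})$, not of order $r^{2n+2}\log r$ as you wrote; with that order your crude two-jet count would lose a logarithm when $n=2$.
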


\begin{proof}
Since \(K_D\sim cr^{-n-1}\) and \(\Phi\) is smooth,
\[
        \varepsilon:=\frac{\Phi}{K_D}
        =r^{n+1}e+\Ophg(r^{2n+2}),
        \qquad e\in C^\infty.
\]
Consequently,
\[
        v
        =v_D(1+\varepsilon)^{-1/(n+1)}
        =v_D-\frac1{n+1}v_D\varepsilon
          +\Ophg(r^{2n+3}).
\]
Since \(v_D=r b+\Ophg(r^{n+2})\), where \(b\) is smooth and positive at
the boundary, this gives
\begin{equation}\label{eq:localization-v-difference}
        v-v_D=r^{n+2}a+R,
        \qquad R=\Ophg(r^{2n+3}).
\end{equation}

From Fefferman's expansion for $D$,
\[
        \frac{v_D}{r}
        =
        \left(\phi_D+\psi_Dr^{n+1}\log r\right)^{-1/(n+1)}.
\]
Since \(n\ge2\), the function \(r^{n+1}\log r\), extended by zero at
\(r=0\), is \(C^2\) up to the boundary.  Thus \(v_D/r\) extends
positively with at least \(C^2\)-regularity, and we may write
\[
        r^{n+2}a=\eta v_D^{n+2},
        \qquad
        \eta=a\left(\frac{r}{v_D}\right)^{n+2}\in C^2.
\]
At the critical exponent \(s=n+2\), \eqref{eq:J-power-variation} gives
\[
        J(v_D+\eta v_D^{n+2})
        =
        J(v_D)+O(v_D^{n+2})
        =
        J(v_D)+O(r^{n+2}),
\]
because the coefficient \(s(n+2-s)\) vanishes.

It remains to control the remainder
\(R=\Ophg(r^{2n+3})\).  Its entries in the two-jet matrix satisfy
\[
        R=\Ophg(r^{2n+3}),\qquad
        dR=\Ophg(r^{2n+2}),\qquad
        \partial\bar\partial R=\Ophg(r^{2n+1}).
\]
The two-jet of \(v_D+\eta v_D^{n+2}\) is bounded up to the boundary.
Since \(J\) is a polynomial in the entries of the two-jet matrix, every term in
\[
        J(v_D+\eta v_D^{n+2}+R)
        -
        J(v_D+\eta v_D^{n+2})
\]
contains at least one entry from the two-jet of \(R\).  Hence this difference
is \(\Ophg(r^{2n+1})\).  Since \(n\ge2\), we have
\(2n+1>n+2\), and therefore
\[
        \Ophg(r^{2n+1})\subset O(r^{n+2}).
\]
Combining the two estimates gives
\[
        J(v)-J(v_D)=O(r^{n+2}),
\]
as claimed.
\end{proof}

\subsection{Sphericality and improved boundary decay in higher dimensions}
\label{subsec:localized-input}

The following lemma is the local form of the implication from asymptotic
Monge--Amp\`ere flatness to spherical boundary geometry that is needed below.

\begin{lemma}\label{lem:EX-local-use}
Let $n\ge3$. Suppose that on the collar over an open patch $M_1\subset\partial D$, we have
\[
 J\left(K_D^{-1/(n+1)}\right)=\frac{\pi^n}{n!}+o(r^2).
\]
Then, after replacing $M_1$ by any relatively compact subpatch,
\[
 J\left(K_D^{-1/(n+1)}\right)=\frac{\pi^n}{n!}+O(r^{n+2}).
\]
\end{lemma}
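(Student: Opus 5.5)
We would prove Lemma~\ref{lem:EX-local-use} by reducing it to a local statement about the Chern--Moser/CR geometry of the patch, using the coefficient calculus of Fefferman's expansion as in \cite{BergmanlogflatobsflatEX24, HuangXiaoChengallD}. Write $K_D=\phi_D r^{-n-1}+\psi_D\log r$ with $r$ the Fefferman defining function. Since $\psi_D\log r$ enters $v_D=K_D^{-1/(n+1)}$ only at order $r^{n+2}\log r$, all terms of $J(v_D)$ of order $<n+2$ come from the smooth coefficient $\phi_D$ alone. So the hypothesis $J(v_D)=\pi^n/n!+o(r^2)$ should translate into a condition on the low-order Taylor coefficients of $\phi_D$ in $r$.

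\emph{Step 1.} Expand $\phi_D=a+r^2P_2+r^3P_3+\cdots$ as in the proof of Proposition~\ref{prop:local-boundary-asymptotics}. Then
\[
v_D=Cr\bigl(1+\eta_2 r^2+\eta_3r^3+\cdots\bigr),
\]
with $\eta_k$ determined by $P_2,\dots,P_k$.

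\emph{Step 2.} Apply Lemma~\ref{lem:MA-variation} successively with $s=3,4,\dots,n+1$. This gives
\[
J(v_D)=C^{n+1}\Bigl(1+\sum_{s=3}^{n+1}s(n+2-s)\eta_{s-1}C^{s-1}r^{s-1}\Bigr)+O(r^{n+1})
\]
modulo nonlinear cross terms, which are of higher order and are handled exactly as in the $n=2$ computation of Proposition~\ref{prop:local-boundary-asymptotics}. Here $J(r)=1+O(r^{n+1})$ is used. The coefficient $3(n-1)$ at $s=3$ is nonzero, so the hypothesis $o(r^2)$ forces $\eta_2|_{r=0}=0$, i.e.\ $P_2|_{M_1}=0$.

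\emph{Step 3 (main obstacle).} By Christoffers/Graham's invariant theory, $P_2|_{\partial D}$ is a nonzero universal multiple of $\|S\|^2$, the squared norm of the Chern--Moser tensor. (For $n\ge3$ this is the Weyl-type invariant; the computation is recorded in \cite[Proposition~2.1]{HuangXiaoChengallD}.) So $P_2|_{M_1}=0$ gives $S\equiv0$ on $M_1$, and $M_1$ is locally spherical.

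\emph{Step 4.} On a relatively compact subpatch, a spherical patch is locally CR-equivalent to a piece of the sphere $\partial\mathbb B^n$. The local Fefferman coefficients $\phi_D \bmod r^{n+1}$ are determined by local CR invariants: the formal Monge--Amp\`ere solution and the local boundary invariants in Fefferman's expansion, \cite{Fefferman74,GrahamhigherasympMongeAmpere}. These coefficients are transformation-covariant, so they agree with those of the ball, where $J(K_{\mathbb B}^{-1/(n+1)})\equiv\pi^n/n!$. Alternatively, and more concretely, all local invariants of weight $<n+1$ are polynomials in $S$ and its covariant derivatives (Graham's theorem for $n\ge3$), so they vanish. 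Hence $\phi_D=a+O(r^{n+1})$, i.e.\ $v_D=Cr+O(r^{n+2},\log r)$. Lemma~\ref{lem:MA-variation} at the critical exponent $s=n+2$ (compare Lemma~\ref{lem:local-error}) then gives $J(v_D)=C^{n+1}J(r)+O(r^{n+2})$.

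One more order is still needed, since $J(r)=1+O(r^{n+1})$ only. On a spherical patch the Fefferman defining function can be chosen locally as the pullback of the ball's, so $J(r)=1+O(r^{n+2})$ after possibly adjusting $r$ by $r^{n+2}$ terms. Such an adjustment does not change $J$ at order $r^{n+1}$ by the critical-exponent vanishing. The required identification is that the obstruction coefficient at order $r^{n+1}$ vanishes for spherical boundaries.

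The delicate point is Step 3's identification of $P_2$ with $\|S\|^2$ and the locality in Step 4. We would justify both by citing the local nature of the relevant results in \cite{GrahamhigherasympMongeAmpere,BergmanlogflatobsflatEX24}.
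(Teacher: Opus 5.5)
Your Steps 1--3 unpack what the paper simply cites as \cite[Proposition~2.5]{BergmanlogflatobsflatEX24}. The $r^2$-coefficient of $J(v_D)$ is a nonzero multiple, with factor $3(n-1)\neq0$, of $P_2|_{\partial D}$, which Christoffers' computation makes a nonzero multiple of $\|S\|^2$. The sphericity conclusion and the appeal to local determination of Fefferman's coefficients match the paper.

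\textbf{The gap (Step~4).} From $v_D=Cr+O(r^{n+2},\log r)$ you invoke the critical exponent to conclude $J(v_D)=C^{n+1}J(r)+O(r^{n+2})$. But the vanishing of $s(n+2-s)$ at $s=n+2$ concerns only \emph{smooth} perturbations, \eqref{eq:J-power-variation}. The contribution of $\psi_D$ to $v_D$ is a term $\eta\chi^{n+2}\log\chi$ sitting exactly at the critical exponent. For it, \eqref{eq:J-log-variation} with $s=n+2$ produces the nonzero non-logarithmic term $(n+2-2s)\eta\chi^{n+1}=-(n+2)\eta\chi^{n+1}$, with a remainder that is only $O(r^{n+2}\log r)$. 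So $J(v_D)-C^{n+1}J(r)$ has an $r^{n+1}$-coefficient proportional to $\psi_D|_{\partial D}$. Even when that boundary value vanishes, you are left with a logarithmic error rather than the stated $O(r^{n+2})$.

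\textbf{The fix.} The missing input is what the paper records explicitly. On a spherical patch, comparison with the ball gives not only $\phi_D=c_0+O(r^{n+1})$ but also $\psi_D=O(r^\infty)$, because $\psi_D$ modulo $O(r^\infty)$ is locally determined and vanishes for $\mathbb B$. Then $\psi_D\log r$ is smooth and flat and can be absorbed into the smooth $r^{n+2}$ correction. After that, only \eqref{eq:J-power-variation} at $s=n+2$ is needed.

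\textbf{Comparison of routes.} With that repair your argument is correct. It differs from the paper in how the order-$r^{n+1}$ defect of $J(r)$ is removed.
\begin{itemize}
\item The paper shows that the global Cheng--Yau solution $\rho_D$ is smooth up to the spherical subpatch, using the Lee--Melrose expansion and Graham's flatness of all logarithmic coefficients. It then expands $K_D$ relative to $\rho_D$, so that $J(C\rho_D)=\pi^n/n!$ exactly.
\item You keep Fefferman's $r$ and compare it with the local exact solution $\tilde r=(1-|F|^2)\,|\det F'|^{-2/(n+1)}$, where $F$ is the holomorphic extension of the local CR equivalence with the sphere. Fefferman's uniqueness gives $r=\tilde r+O(\tilde r^{n+2})$, and the critical exponent then yields $J(r)=1+O(r^{n+2})$; no adjustment of $r$ is needed.
\end{itemize}
Your route is more elementary and avoids the Lee--Melrose/Graham machinery. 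Note that $F$ is in general defined only on the pseudoconvex side, smooth up to $M_1$, since a smooth spherical hypersurface need not be real-analytic. One side is all that $J(\tilde r)=1$ requires. More simply still, you could expand $K_D$ directly relative to the Fefferman defining function $\tilde r$. Your Step~4 then becomes the paper's argument with $\rho_D$ replaced by $\tilde r$, and the separate ``one more order'' step disappears.
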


\begin{proof}
Write $v_D=K_D^{-1/(n+1)}$. By \cite[Proposition~2.5]{BergmanlogflatobsflatEX24}, every point of $M_1$ is CR umbilical.  Since $n\ge3$, the Chern--Moser curvature therefore vanishes on the open patch $M_1$, which is locally spherical.

Fix $M_2\Subset M_1$.  Let $\rho_D$ be the Cheng--Yau defining function, normalized by
\begin{equation}\label{eq:local-exact-CY}
 J(\rho_D)=1,\qquad \rho_D>0\text{ in }D,\qquad \rho_D=0\text{ on }\partial D.
\end{equation}
The Lee--Melrose expansion
\cite[Theorem~8.18]{LeeMelroseMongeAmpere} gives the full asymptotic
expansion of $\rho_D$ at the boundary.  Locally, however, such an
asymptotic solution is not completely determined by the boundary
hypersurface: Graham's Theorem~2.11 shows that one arbitrary smooth
boundary function remains, while the logarithmic coefficient functions
modulo the stated order are independent of this choice.  In particular,
the leading logarithmic coefficient restricted to the boundary is a local
CR invariant.  Since $M_1$ is locally spherical, this invariant vanishes
on $M_1$.  Graham's Proposition~2.16
\cite{GrahamhigherasympMongeAmpere} then implies that every logarithmic
coefficient is flat along $M_2$.  The Lee--Melrose expansion therefore has
no logarithmic singularities to any order along $M_2$, and its differentiated
remainder estimates imply that $\rho_D$ extends smoothly to $M_2$.  Its
leading term is a positive smooth multiple of a positive Fefferman defining
function, so $\rho_D$ is itself a positive defining function there.

Because $\rho_D$ is smooth up to $M_2$ and satisfies $J(\rho_D)=1$, it is itself a Fefferman defining function on this collar.  We may therefore write Fefferman's expansion relative to $\rho_D$ in the form
\[
 K_D=\frac{\phi_D}{\rho_D^{n+1}}+\psi_D\log \rho_D,
 \qquad
 \phi_D,\psi_D\in C^\infty.
\]
For a Fefferman defining function, $\phi_D$ modulo $O(\rho_D^{n+1})$ and $\psi_D$ modulo $O(\rho_D^\infty)$ are determined by the local CR geometry of the boundary.  Since $M_1$ is locally spherical, comparison with the unit sphere gives
\[
 \phi_D=\frac{n!}{\pi^n}+\rho_D^{n+1}a,
 \qquad
 \psi_D=O(\rho_D^\infty),
 \qquad
 a\in C^\infty;
\]
see, in particular, \cite[proof of Proposition~2.1]{BergmanlogflatobsflatEX24}. Since $\psi_D$ is flat, $\psi_D\log \rho_D$ extends smoothly and is  flat at $M_2$.  Hence, after absorbing this term into a smooth function $A$,
\begin{equation}\label{eq:local-spherical-Bergman-expansion}
 K_D=\frac{n!}{\pi^n}\rho_D^{-n-1}+A,
 \qquad A\in C^\infty.
\end{equation}

Write
\[
 c_0=\frac{n!}{\pi^n},
 \qquad
 C=c_0^{-1/(n+1)},
 \qquad
 \chi=C\rho_D.
\]
Then
\[
 J(\chi)=C^{n+1}J(\rho_D)=\frac{\pi^n}{n!}.
\]
Moreover, \eqref{eq:local-spherical-Bergman-expansion} gives
\[
\begin{aligned}
 v_D
 &=K_D^{-1/(n+1)}\\
 &=C\rho_D
 \left(1+c_0^{-1}A\,\rho_D^{n+1}\right)^{-1/(n+1)}
 =\chi+\eta\chi^{n+2}
\end{aligned}
\]
for some $\eta\in C^\infty$ on a sufficiently small collar.  Thus
\begin{equation}\label{eq:EX-local-v-expansion}
 v_D=\chi+\eta\chi^{n+2}.
\end{equation}
Applying \cite[Lemma~2.3]{BergmanlogflatobsflatEX24} with $s=n+2$, the coefficient of $\chi^{n+1}$ vanishes, and hence
\[
 J(v_D)=J(\chi)+O(\chi^{n+2})
 =\frac{\pi^n}{n!}+O(\rho_D^{n+2}).
\]
Finally, since $\rho_D/r$ extends smoothly and positively to $M_2$,
\[
 J(v_D)=\frac{\pi^n}{n!}+O(r^{n+2}),
\]
as required.
\end{proof}

\section{Local ACH geometry and indicial analysis}
\label{sec:theta-indicial}

In this section all ACH operators, equations, and Carleman estimates concern
\(\Delta_{\gB}\) on the fixed strongly pseudoconvex collar
\[
        X_0=[0,\varepsilon)_r\times M_0.
\]
We use the square-root compactification
\[
        x=r^{1/2},
        \qquad
        X_\Theta=[0,\sqrt\varepsilon)_x\times M_0.
\]
On \(X_\Theta\), the notation \(\Aphg^m\) and
\(\Ophg(x^m)\) is understood with \(x\) as the smooth boundary defining
function.

\subsection{The contact form and the local ACH coframe}

Let \(J_{\mathrm{st}}\) denote the standard complex structure on \(\C^n\).
For a real-valued function \(f\), write
\[
        d^cf
        =
        \frac12\sum_j
        \bigl(f_{x_j}\,dy_j-f_{y_j}\,dx_j\bigr),
        \qquad
        d^cf(X)=-\frac12df(J_{\mathrm{st}}X).
\]
Put
\[
        H=TM_0\cap J_{\mathrm{st}}TM_0,
        \qquad
        \theta
        =
        \left.
        \frac{\sqrt{-1}}2(\partial r-\bar\partial r)
        \right|_{M_0}.
\]
Then
\[
        \ker\theta=H.
\]
Since \(M_0\) is strongly pseudoconvex, the Levi metric
\[
        h_\theta(X,Y)=d\theta(X,J_{\mathrm{st}}Y),
        \qquad X,Y\in H,
\]
is positive definite.

Let \(T\) be the Reeb field,
\[
        \theta(T)=1,
        \qquad
        d\theta(T,\cdot)=0,
\]
and choose a local frame \(Y_1,\ldots,Y_{2n-2}\) of \(H\).
The local \(\Theta\)-vector fields are generated over
\(C^\infty(X_\Theta)\) by
\[
        x\partial_x,\qquad
        xY_1,\ldots,xY_{2n-2},\qquad
        x^2T.
\]
We denote their \(C^\infty(X_\Theta)\)-module by
\(\VTheta(X_\Theta)\).  Its dual coframe is
\[
        \frac{dx}{x},
        \qquad
        \frac{\eta^1}{x},\ldots,\frac{\eta^{2n-2}}{x},
        \qquad
        \frac{\theta}{x^2},
\]
where \(\eta^1,\ldots,\eta^{2n-2}\) is a local coframe on \(H\).
A \(\Theta\)-tensor is \(O_\Theta(x^m)\) if its coefficients in a smooth
\(\Theta\)-frame, together with all their \(\Theta\)-derivatives, are
\(O(x^m)\).  We also write
\[
        \dotC(X_\Theta)
        =
        \{f\in C^\infty(X_\Theta):
          f=O(x^N)\text{ for every }N\}.
\]

If \(\omega\) is a K\"ahler form, its associated real Riemannian
metric is
\[
g^{\mathrm R}(X,Y)=\omega(X,J_{\mathrm{st}}Y).
\]
Thus, when
\[
\omega=\sqrt{-1}g_{\alpha\bar\beta}\,
dz_\alpha\wedge d\bar z_\beta,
\]
we have
\[
g^{\mathrm R}
=
2\operatorname{Re}
\bigl(g_{\alpha\bar\beta}\,
dz_\alpha\otimes d\bar z_\beta\bigr).
\]
More generally, if \(q\) is a real \((1,1)\)-form,
\(q^{\mathrm R}\) denotes the associated symmetric real two-tensor. In
particular, \(g_B^{\mathrm R}\) is associated with \(\omega_B\).  For a
Riemannian metric \(g\) and a covector \(\xi\), the notation
\(\xi^\sharp\) denotes the \(g\)-dual vector.

Set
\[
        \omega_0=(n+1)\ddbar(-\log r),
\]
and let \(g_0^{\mathrm R}\) be its real Riemannian metric.  The local
strict pseudoconvex normal form in the preceding coframe shows that
\(g_0^{\mathrm R}\) is a smooth positive-definite \(\Theta\)-metric whose
boundary value is the complex-hyperbolic model.  Hence, in the adapted
\(\Theta\)-coframe,
\begin{equation}\label{eq:local-ACH-leading}
g_0^{\mathrm R}
=
c_{\mathrm{rad}}\frac{dr^2}{r^2}
+
c_{\mathrm{Reeb}}\frac{\theta^2}{r^2}
+
c_{\mathrm{hor}}\frac{h_\theta}{r}
+
O_\Theta(x),
\qquad
c_{\mathrm{rad}},c_{\mathrm{Reeb}},c_{\mathrm{hor}}>0,
\end{equation}
where \(O_\Theta(x)\) denotes a smooth \(\Theta\)-tensor whose coefficients
vanish at least to first order in \(x\), with the same property after every
fixed number of \(\Theta\)-derivatives.

The localized Fefferman expansion is
\[
        K
        =r^{-(n+1)}
        \bigl(\phi+\psi r^{n+1}\log r\bigr),
        \qquad
        \phi,\psi\in C^\infty(X_0),\quad \phi|_{M_0}>0.
\]
Therefore
\[
        g_B^{\mathrm R}-g_0^{\mathrm R}
        =\bigl(\ddbar\log \phi\bigr)^{\mathrm R}
        +\left(\ddbar
        \log\left(1+\frac{\psi}{\phi}
        r^{n+1}\log r\right)\right)^{\mathrm R}.
\]
Both terms have coefficients vanishing to positive order in the
\(\Theta\)-coframe.  Thus \(g_B^{\mathrm R}\) has the same boundary
value as \(g_0^{\mathrm R}\), and is a polyhomogeneous ACH \(\Theta\)-metric
on the collar.  In the variable \(x=r^{1/2}\), the horizontal and Reeb
length scales are respectively \(x^{-1}\) and \(x^{-2}\).

\subsection{Normal and indicial operators}

The normal operator freezes a \(\Theta\)-operator at a boundary point.  Locally, if
\[
        P=\sum_{|I|\le m}a_I(x,y)E_I,
\]
where each \(E_I\) is a product of the generators
\(x\partial_x,xY_1,\ldots,xY_{2n-2},x^2T\), then the normal operator at
\(p\in M_0\) is obtained by replacing each coefficient by its boundary value
\(a_I(0,p)\) and interpreting the generators as left-invariant vector fields on the
osculating \(\Theta\)-group at \(p\).  This is the local form of the normal map of
Epstein--Melrose--Mendoza; see
\cite[Section~4]{EpsteinMelroseMendoza1991}.

For the indicial operator one retains only the part of \(N_p(P)\) that acts on pure normal
powers.  Indeed, if
\[
        u=r^\lambda a(y)=x^{2\lambda}a(y),
\]
then
\[
        (x\partial_x)u=2\lambda x^{2\lambda}a(y),
        \qquad
        (xY_j)u=x^{2\lambda+1}Y_ja(y),
        \qquad
        (x^2T)u=x^{2\lambda+2}Ta(y).
\]
Thus all Levi and Reeb \(\Theta\)-derivatives raise the \(x\)-order, and the coefficient
of the first term can only come from the normal generator \(x\partial_x=2r\partial_r\).

Let \(\Delta_0\) denote the complex trace Laplacian of the reference
K\"ahler form \(\omega_0\).  The normal operators of \(\Delta_{\gB}\)
and \(\Delta_0\) agree.  Indeed,
\[
        \omega_B=\omega_0+q,
\]
where \(q\) is the sum of
\(\ddbar\log \phi\) and the logarithmic lower-order
term arising from
\(\log(1+(\psi/\phi)r^{n+1}\log r)\).  In a local
\(\Theta\)-frame the coefficient matrices satisfy
\[
        G_{AB}=G^0_{AB}+H_{AB},
        \qquad H_{AB}\longrightarrow0\quad(x\to0),
\]
because an ordinary smooth covariant two-tensor, as well as the logarithmic
correction and its two ordinary derivatives, has positive \(x\)-order in the
\(\Theta\)-coframe.  Hence
\[
               G^{AB}=(G_0^{-1})^{AB}+o(1).
\]
The coefficients of the metric difference, and all their
\(\Theta\)-derivatives, vanish to positive order. The inverse metric
and volume-density formulas therefore give the same boundary values
for the first-order coefficients of the two Laplacians as well.
Hence, by the definition of the normal operator,
\[
        N_p(\Delta_{\gB})=N_p(\Delta_0).
\]
In Chern--Moser coordinates the boundary model at \(p\) is
\(\{\operatorname{Im}w=|z|^2\}\).  Thus \(N_p(\Delta_0)\) is the scalar
normal operator of the Siegel upper half-space, equivalently of the ball.

It remains only to record the scalar indicial polynomial.  For the unit ball let
\(\delta=1-|z|^2\).  Its Bergman metric is
\[
        g_{\alpha\bar\beta}
        =(n+1)\left(\frac{\delta_{\alpha\beta}}{\delta}
        +\frac{\bar z_\alpha z_\beta}{\delta^2}\right),
        \qquad
        g^{\alpha\bar\beta}
        =\frac{\delta}{n+1}\bigl(\delta_{\alpha\beta}-z_\alpha\bar z_\beta\bigr).
\]
For \(f=\delta^\lambda\),
\[
        f_{\alpha\bar\beta}
        =\lambda(\lambda-1)\delta^{\lambda-2}\bar z_\alpha z_\beta
          -\lambda\delta^{\lambda-1}\delta_{\alpha\beta}.
\]
Contracting and using \(|z|^2=1-\delta\), one obtains
\[
        \Delta_{\gB}(\delta^\lambda)
        =\frac{\lambda(\lambda-n)}{n+1}\delta^\lambda+O(\delta^{\lambda+1}).
\]
Thus, for the complex Laplacian convention fixed in Section~\ref{sec:prelim},
\begin{equation}\label{eq:exact-indicial-polynomial}
        I(\Delta_{\gB})(\lambda)=\frac{\lambda(\lambda-n)}{n+1}.
\end{equation}
Only the two simple roots \(0\) and \(n\) will be used.

\subsection{Indicial vanishing}

By the preceding normal-operator calculation, the Bergman Laplacian is a
\(\Theta\)-differential operator of order two; compare
\cite[Sections~1 and~4]{EpsteinMelroseMendoza1991}.  Its exact scalar indicial
polynomial is \eqref{eq:exact-indicial-polynomial}.  Locally one may write
\[
        \Delta_{\gB}
        =\sum a_{j\alpha k}(x,y)
        (x\partial_x)^j(xY)^\alpha(x^2T)^k,
\]
where the sum has \(\Theta\)-order at most two and all coefficients are
polyhomogeneous, with a well-defined boundary value.  Hence, for every smooth
boundary coefficient \(a(y)\),
\begin{equation}\label{eq:indicial-action-expanded}
        \Delta_{\gB}\bigl(r^\lambda a(y)\bigr)
        =P(\lambda)r^\lambda a(y)
          +\Ophg(x^{2\lambda+1}),
        \qquad
        P(\lambda)=\frac{\lambda(\lambda-n)}{n+1}.
\end{equation}
Indeed, a tangential Levi generator \(xY_j\) raises the \(x\)-order by one,
a Reeb generator \(x^2T\) raises it by two, and replacing a coefficient by
its boundary value introduces an additional factor of \(x\).  Thus the
remainder in \eqref{eq:indicial-action-expanded} has strictly larger
\(x\)-order than \(x^{2\lambda}\).

For logarithms, use
\(r^\lambda(\log r)^q=\partial_\lambda^qr^\lambda\).
Differentiating the indicial identity gives
\begin{align}
\Delta_{\gB}\bigl(r^\lambda(\log r)^q a(y)\bigr)
&=r^\lambda
\left[
        P(\lambda)(\log r)^q
        +qP'(\lambda)(\log r)^{q-1}
\right]a(y)  \notag\\
&\quad
+O\bigl(r^\lambda(\log r)^{q-2}\bigr)
+\Ophg(x^{2\lambda+1}).
\label{eq:log-indicial-action}
\end{align}
The first error term is absent when \(q\le1\).  In particular,
\[
        P^{-1}(0)=\{0,n\},\qquad
        P'(0)=-\frac{n}{n+1}\neq0,\qquad
        P'(n)=\frac{n}{n+1}\neq0.
\]

\begin{lemma}[Indicial vanishing]\label{lem:indicial-bootstrap}
Let \(u\) be polyhomogeneous in \(r\) and \(\log r\) on the strongly
pseudoconvex collar, and suppose
\[
        \Delta_{\gB}u=0.
\]
Then:
\begin{enumerate}[label=(\alph*)]
\item if \(u=O(r^\delta)\) for some \(\delta>0\), then \(u=O(r^n)\);
\item if \(u=O(r^{n+\varepsilon})\) for some \(\varepsilon>0\), then
\[
        u=O(r^M)\qquad\text{for every }M.
\]
\end{enumerate}
\end{lemma}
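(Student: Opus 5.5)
The plan is to work directly with the polyhomogeneous expansion of \(u\) and peel off its leading term one at a time, using \eqref{eq:indicial-action-expanded} and \eqref{eq:log-indicial-action}. Write
\[
        u\sim\sum_{(z,q)\in E}u_{zq}(y)\,r^z(\log r)^q .
\]
Because \(E\) has only finitely many pairs below any height, there is a well-defined leading exponent \(z_0=\min\{z:u_{zq}\not\equiv0\text{ for some }q\}\). Let \(q_0\) be the largest \(q\) with \(u_{z_0q}\not\equiv0\).

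\textbf{Key step.} Apply \(\Delta_{\gB}\) term by term. By \eqref{eq:log-indicial-action}, the coefficient of \(r^{z_0}(\log r)^{q_0}\) in \(\Delta_{\gB}u\) is \(P(z_0)u_{z_0q_0}(y)\). The other contributions do not interfere with it:
\begin{itemize}
\item terms of the same exponent with smaller \(q\) only produce lower powers of \(\log r\);
\item the \(\Ophg(x^{2z+1})\) remainders, and all terms with \(z>z_0\), live at strictly higher \(r\)-order (at least \(r^{z_0+1/2}\) in the \(x\)-scale);
\item the differentiated remainder condition in the definition of polyhomogeneity ensures that applying the second-order \(\Theta\)-operator to the tail \(u-\sum_{z<N}\) gives \(O(r^{N-\delta})\), so the formal computation is legitimate.
\end{itemize}
Since \(\Delta_{\gB}u=0\), uniqueness of polyhomogeneous expansions forces \(P(z_0)u_{z_0q_0}=0\). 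Hence \(z_0\in\{0,n\}\).

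A second look at the coefficient of \(r^{z_0}(\log r)^{q_0-1}\) rules out logarithms at a root. If \(q_0\ge1\), that coefficient is \(q_0P'(z_0)u_{z_0q_0}+P(z_0)u_{z_0,q_0-1}=q_0P'(z_0)u_{z_0q_0}\), and it must vanish. Since \(P'(0),P'(n)\neq0\), this contradicts \(u_{z_0q_0}\not\equiv0\). So the leading term is \(u_{z_0 0}(y)r^{z_0}\) with no logarithm.

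\textbf{Deriving (a) and (b).} For (a): if \(u=O(r^\delta)\) with \(\delta>0\), then \(z_0\ge\delta>0\). Pure-power or logarithmic leading terms with exponent below \(\delta\) would violate the pointwise bound. Therefore the leading exponent cannot lie in \((0,n)\), which gives \(z_0\ge n\) and \(u=\Ophg(r^{n})\subset O(r^n)\). For (b): if \(u=O(r^{n+\varepsilon})\), then \(z_0\ge n+\varepsilon\). But no root of \(P\) lies in \([n+\varepsilon,\infty)\), so a nonzero leading coefficient is impossible. Hence every coefficient of the expansion vanishes, \(u\in\Ophg(r^\infty)\), and by \eqref{eq:phg-to-big-O} \(u=O(r^M)\) for every \(M\).

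\textbf{Main obstacle.} The delicate point is making the claim "the remainder has strictly larger order" rigorous for polyhomogeneous, rather than smooth, coefficients of \(\Delta_{\gB}\). In particular, one must check that the logarithmic metric correction \(r^{n+1}\log r\) cannot produce terms at exponent \(z_0\) with a higher power of \(\log r\) than \(q_0\). I would handle this by noting that the coefficients of \(\Delta_{\gB}\) differ from their boundary values by \(\Ophg(x)\). Consequently, the index set of the error is contained in \(E+\{(k/2,\ast):k\ge1\}\), which is strictly shifted in \(z\) and so never returns to exponent \(z_0\).
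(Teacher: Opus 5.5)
Your proposal is correct and follows essentially the same route as the paper: isolate the first nonzero term \(r^{\lambda}\sum_{q\le Q}a_q(y)(\log r)^q\) of the polyhomogeneous expansion, read off the coefficient \(P(\lambda)a_Q\) from the indicial action, and at a simple root \(\lambda\in\{0,n\}\) use the coefficient \(QP'(\lambda)a_Q\) of the next-lower logarithmic power to exclude logarithms. One small wording point: \(\Ophg(r^n)\not\subset O(r^n)\) in general, so in (a) the bound \(u=O(r^n)\) actually comes from your earlier observation that the leading term at \(z_0=n\) has no logarithm, which is how the paper argues as well.
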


\begin{proof}
Because \(u\) is polyhomogeneous, if one of the conclusions fails then there is a first
nonzero term in its expansion.  Write the first term as
\begin{equation}\label{eq:first-term}
        u_{\lambda,Q}
        =
        r^\lambda\sum_{q=0}^{Q}a_q(y)(\log r)^q,
        \qquad
        a_Q\not\equiv0.
\end{equation}
All other terms in \(u\) have strictly larger \(x\)-order.  By
\eqref{eq:log-indicial-action}, applying \(\Delta_{\gB}\) to those terms, or taking
the \(\Ophg(x^{2\lambda+1})\) remainder from the displayed first term, cannot
produce a term of \(x\)-order \(2\lambda\).  Thus the coefficient at the first
\(x\)-order can be tested independently.

For (a), the hypothesis \(u=O(r^\delta)\), \(\delta>0\), excludes a leading
\(\lambda=0\) term.  If \(u=O(r^n)\) failed, then either \(0<\lambda<n\), or
\(\lambda=n\) with \(Q\geq1\).  If \(0<\lambda<n\), then \(P(\lambda)\neq0\), and
\eqref{eq:log-indicial-action} gives a nonzero coefficient
\[
        P(\lambda)a_Q(y)
\]
for \(r^\lambda(\log r)^Q\) in \(\Delta_{\gB}u\), contradiction.  If \(\lambda=n\)
and \(Q\geq1\), then \(P(n)=0\) but \(P'(n)\neq0\), and the coefficient of
\(r^n(\log r)^{Q-1}\) in \(\Delta_{\gB}u\) is
\[
        QP'(n)a_Q(y),
\]
again a contradiction.  Thus \(u=O(r^n)\).

For (b), if \(u\) did not vanish to infinite order, the first term would have
\(\lambda>n\).  Then \(P(\lambda)\neq0\), and the coefficient of
\(r^\lambda(\log r)^Q\) in \(\Delta_{\gB}u\) is again \(P(\lambda)a_Q(y)\neq0\),
contradiction.  Hence no first nonzero term exists.
\end{proof}

\begin{remark}
The preceding bootstrap uses the full polyhomogeneous boundary expansion of
\(u\), and hence the \(C^\infty\)-smoothness of the strongly pseudoconvex
boundary patch.  For a boundary of only finite regularity, the argument gives
only finitely many terms in the boundary expansion and does not yield
\[
        u\in\dot C^\infty(X_\Theta).
\]
The Carleman argument in Section~\ref{sec:unique-continuation} requires this
infinite-order boundary flatness.
\end{remark}

\subsection{Higher-dimensional boundary decay}

\begin{proposition}\label{prop:decay-high-dim}
Assume $n\ge3$ and, on the fixed strongly pseudoconvex collar,
\[
        \Delta_{\gB}u=0,
        \qquad
        u=\log\frac{\cB}{\Cn}.
\]
Then
\[
        u=O(r^{n+2}).
\]
\end{proposition}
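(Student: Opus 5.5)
The plan is to combine the indicial vanishing Lemma~\ref{lem:indicial-bootstrap}(a) with the Monge--Amp\`ere rigidity of Lemma~\ref{lem:EX-local-use}, transferring information between \(\om\) and the localized domain \(D\) via Lemma~\ref{lem:local-error}.

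\emph{Step 1: \(u=O(r^n)\).} By Proposition~\ref{prop:local-boundary-asymptotics}, \(u=\Ophg(r^2)\), so \(u\) is polyhomogeneous and \(u=O(r^{\delta})\) for some \(\delta>0\) (for instance any \(\delta<2\), by \eqref{eq:phg-to-big-O}). Since \(\Delta_{\gB}u=0\), Lemma~\ref{lem:indicial-bootstrap}(a) gives \(u=O(r^n)\). Because \(n\ge3\), this means \(u=o(r^2)\).

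\emph{Step 2: pass to \(D\).} By \eqref{eq:u-J-identity}, \(J(v)=\frac{\pi^n}{n!}e^{u}=\frac{\pi^n}{n!}+O(r^n)\). Lemma~\ref{lem:local-error} gives \(J(v)-J(v_D)=O(r^{n+2})\). Therefore
\[
J\bigl(K_D^{-1/(n+1)}\bigr)=\frac{\pi^n}{n!}+O(r^n),
\]
which is \(o(r^2)\) since \(n\ge3\). Thus the hypothesis of Lemma~\ref{lem:EX-local-use} holds on the collar over \(M_0\), viewed as a patch of \(\partial D\). That lemma yields \(J(v_D)=\frac{\pi^n}{n!}+O(r^{n+2})\) after shrinking to a relatively compact subpatch.

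\emph{Step 3: return to \(\om\).} Applying Lemma~\ref{lem:local-error} again gives \(J(v)=\frac{\pi^n}{n!}+O(r^{n+2})\). Since \(\log\) is smooth near \(1\), \eqref{eq:u-J-identity} then gives \(u=\log\bigl(\tfrac{n!}{\pi^n}J(v)\bigr)=O(r^{n+2})\) on the shrunken collar. This is the claim, read with the standing convention that patches may be shrunk.

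The main obstacle is hidden inside Lemma~\ref{lem:EX-local-use}, namely the passage from \(o(r^2)\) flatness to CR umbilicity and hence sphericity, together with the smoothness of the Cheng--Yau solution. Since that lemma is available, the remaining care needed here is bookkeeping: checking that the \(O(r^{n+2})\) localization error in Lemma~\ref{lem:local-error} is small enough not to spoil either the hypothesis or the conclusion, and that the successive patch shrinkings are harmless. The dimension restriction \(n\ge3\) enters twice: in \(O(r^n)\subset o(r^2)\), and in the sphericity conclusion via the Chern--Moser tensor.
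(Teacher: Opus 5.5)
Your proposal is correct and follows the paper's proof essentially step for step. Indicial vanishing gives $u=O(r^n)=o(r^2)$, and Lemma~\ref{lem:local-error} transfers this to $J(v_D)$. Lemma~\ref{lem:EX-local-use} then upgrades it to $O(r^{n+2})$ after shrinking the patch, and a second application of Lemma~\ref{lem:local-error} returns the estimate to $u$.
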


\begin{proof}
Proposition~\ref{prop:local-boundary-asymptotics} gives
$u=\Ophg(r^2)$, hence $u=O(r)$ by \eqref{eq:phg-to-big-O}.
Since $\Delta_{\gB}u=0$, Lemma~\ref{lem:indicial-bootstrap}(a) yields
$u=O(r^n)=o(r^2)$. Hence, by \eqref{eq:u-J-identity} and
Lemma~\ref{lem:local-error},
\[
        J(v_D)=J(v)+O(r^{n+2})
        =\frac{\pi^n}{n!}+o(r^2).
\]
After shrinking the patch, Lemma~\ref{lem:EX-local-use} improves this to
$J(v_D)=\pi^n/n!+O(r^{n+2})$. Applying
Lemma~\ref{lem:local-error} once more gives
\[
        J(v)=\frac{\pi^n}{n!}+O(r^{n+2}),
        \qquad
        u=\log\bigl(1+O(r^{n+2})\bigr)=O(r^{n+2}).
\]
\end{proof}

\begin{proposition}[Infinite-order vanishing on the strongly pseudoconvex patch]
\label{prop:infinite-vanishing}
Let
\[
        u=\log\frac{\cB}{\Cn}.
\]
Assume that
\[
        \Delta_{\gB}u=0
\]
on the fixed strongly pseudoconvex collar over $M_0$. Then, on this collar,
\[
        u=O(r^M)\qquad\text{for every }M>0.
\]
Moreover, after extending \(u\) by zero to \(x=0\),
\[
        u\in\dot C^\infty(X_\Theta).
\]
\end{proposition}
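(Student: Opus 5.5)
The plan is to split by dimension, feed the available decay into Lemma~\ref{lem:indicial-bootstrap}(b), and then convert the resulting $O(r^M)$ bounds into membership in $\dot C^\infty(X_\Theta)$ using the differentiated remainder estimates built into the polyhomogeneous calculus.

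First, $u$ is polyhomogeneous on the collar. Indeed, $u=\log(\frac{n!}{\pi^n}J(v))$ by \eqref{eq:u-J-identity}. Moreover, $v=K^{-1/(n+1)}=r(\phi+\psi r^{n+1}\log r)^{-1/(n+1)}$ is polyhomogeneous, since powers of polyhomogeneous functions with positive boundary value are. The two-jet entering $J$ therefore stays polyhomogeneous, and $J(v)$ has positive boundary value $\pi^n/n!$. Consequently $u\in\Aphg(X_0)$.

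Next, I obtain decay beyond the larger indicial root $n$.
\begin{itemize}
\item For $n=2$, Proposition~\ref{prop:local-boundary-asymptotics} gives $u=\Ophg(r^4)$. By \eqref{eq:phg-to-big-O} this implies $u=O(r^{3})=O(r^{n+1})$, so the hypothesis of Lemma~\ref{lem:indicial-bootstrap}(b) holds with $\varepsilon=1$.
\item For $n\ge3$, Proposition~\ref{prop:decay-high-dim}, which uses the harmonicity of $u$, gives $u=O(r^{n+2})$. This is again beyond $r^{n+\varepsilon}$ with $\varepsilon=2$.
\end{itemize}
In both cases Lemma~\ref{lem:indicial-bootstrap}(b) applies and yields $u=O(r^M)$ for every $M$, after shrinking the patch as allowed.

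To conclude $u\in\dot C^\infty(X_\Theta)$, I use that $u$ is polyhomogeneous and $O(r^M)$ for every $M$. Because the expansion is unique, every coefficient $u_{zq}$ must vanish: a first nonzero term $r^\lambda(\log r)^q a(y)$ would contradict $O(r^{\lambda+\delta})$ decay. The index set is therefore effectively empty, and the differentiated remainder condition in the definition of polyhomogeneity then reads
\[
Q u\in O(r^{N-\nu},\log r)\quad\text{for every }N
\]
and every adapted monomial $Q$ of normal order $\nu$. Hence all derivatives of $u$ in the adapted frame vanish to infinite order in $r$.

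In the variable $x=r^{1/2}$ we have $\partial_x=2x\,\partial_r$. Each $\partial_x$ therefore contributes at most one $\partial_r$ with a smooth factor, and tangential fields are unchanged. So every $x$-derivative monomial of $u$ is $O(x^N)$ for all $N$. Extending $u$ by zero, every derivative has limit $0$ at $x=0$, so the extension is $C^\infty$ up to $x=0$ and flat, that is, $u\in\dot C^\infty(X_\Theta)$.

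The main point requiring care is the first step: checking that $u$ is genuinely polyhomogeneous with a nonzero boundary value of $J(v)$, so that $\log$ is admissible. The rest is bookkeeping using results already established.
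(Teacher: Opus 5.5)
Your proposal is correct and follows the paper's proof: the same dimension split (Proposition~\ref{prop:decay-high-dim} for $n\ge3$, and $u=\Ophg(r^4)\subset O(r^3)$ for $n=2$), then Lemma~\ref{lem:indicial-bootstrap}(b), then the differentiated remainder condition to get flatness. The differences are cosmetic and both versions are valid: you re-derive the polyhomogeneity of $u$, which Proposition~\ref{prop:local-boundary-asymptotics} already supplies, and you pass to ordinary derivatives on $X_\Theta$ directly via $\partial_x=2x\partial_r$, whereas the paper writes ordinary derivatives as $x^{-m}$ times $\Theta$-monomials.
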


\begin{proof}
If \(n\ge3\), Proposition~\ref{prop:decay-high-dim} gives
\[
        u=O(r^{n+2}).
\]
If \(n=2\), Proposition~\ref{prop:local-boundary-asymptotics}
gives
\[
        u=\Ophg(r^4),
\]
and hence, by \eqref{eq:phg-to-big-O},
\[
        u=O(r^3).
\]
In both cases there is an \(\varepsilon>0\) such that
\[
        u=O(r^{n+\varepsilon}).
\]
The function $u$ is polyhomogeneous by
Proposition~\ref{prop:local-boundary-asymptotics}.
Lemma~\ref{lem:indicial-bootstrap}(b) therefore gives
$u=O(r^M)$ for every $M$.
Since \(u\) is polyhomogeneous and has no nonzero term at any finite
boundary order, the differentiated remainder condition in the definition of
polyhomogeneity implies that, for every \(\Theta\)-differential monomial
\(Q\) and every \(L>0\),
\[
        Qu=O(x^L).
\]
Here we have rewritten the expansion using \(r=x^2\); this only doubles the
powers and replaces \(\log r\) by \(2\log x\).

It remains to pass from \(\Theta\)-derivatives to ordinary derivatives in
the smooth structure of \(X_\Theta\).  Locally,
\[
        \partial_x=x^{-1}(x\partial_x),
        \qquad
        Y_j=x^{-1}(xY_j),
        \qquad
        T=x^{-2}(x^2T).
\]
Consequently, every ordinary differential monomial \(D\) can be written on
the interior as a finite sum
\[
        D=\sum_\nu c_\nu(x,y)x^{-m_\nu}Q_\nu,
\]
where \(c_\nu\) is smooth up to \(x=0\), \(m_\nu\ge0\), and \(Q_\nu\) is a
\(\Theta\)-differential monomial.  Given any \(N>0\), choose \(L\) larger
than \(N+\max_\nu m_\nu\).  The preceding \(\Theta\)-flatness then gives
\[
        Du=O(x^N).
\]
Thus every ordinary derivative of \(u\) tends to zero to arbitrary order.
Extending \(u\) by zero across \(x=0\) therefore gives a
\(C^\infty\)-function flat at the boundary.  Hence
\[
        u\in\dot C^\infty(X_\Theta).
\]
\end{proof}

\section{Local unique continuation at the ACH boundary}
\label{sec:unique-continuation}

For the applications, set
\[
        u=\log\frac{\cB}{\Cn}.
\]
We prove local unique continuation for functions satisfying
\[
        \Delta_{\gB}u=0,
        \qquad u\in\dot C^\infty(X_\Theta)
\]
on a strongly pseudoconvex collar. For $u=\log(\cB/\Cn)$, these
conditions follow from Lemma~\ref{lem:scalar-pde} and
Proposition~\ref{prop:infinite-vanishing}.

Following the convexity mechanism of Vasy--Wunsch
\cite{VasyWunsch2005}, we construct a tangentially localized weight
$\tau$ on a region $E_S$ and verify condition~(28.2.6) of
\cite[Theorem~28.2.3, p.~237]{HormanderIV}. Uniform normalized ACH
charts give common constants in the local Carleman estimates, which
we sum using a partition of unity. Cutoffs then remove the tangential
and upper ends. Throughout, $g_B^{\mathrm R}$ is associated with
$\omega_B$, and $x=r^{1/2}$ is the boundary defining function on
$X_\Theta$.

\begin{lemma}
\label{lem:local-ach-product}
Let \(\Gamma_1\Subset M_0\).  After shrinking, choose a boundary patch
\(\Gamma_0\) satisfying
\[
        \overline{\Gamma_0}\Subset\Gamma_1.
\]
Let
\[
        \omega_0:=(n+1)\ddbar(-\log r),
\]
and \(g_0^{\mathrm R}\) be its associated real Riemannian metric.  There
exist a constant \(c_B>0\), a smooth boundary defining function
\(\widetilde x\) on the square-root compactification over \(\Gamma_0\), and constants
\(c,C,c_0>0\) such that
\begin{equation}\label{eq:reference-defining-function-comparison}
        0<c\le \frac{\widetilde x}{x}\le C.
\end{equation}
The quotient \(\widetilde x/x\), its reciprocal, and all of their derivatives
are bounded in normalized \(\Theta\)-coordinates through every fixed finite
order.

Put
\[
        \widehat g_0:=c_Bg_0^{\mathrm R},
        \qquad
        \widehat g:=c_Bg_B^{\mathrm R},
        \qquad
        s:=-\log\widetilde x.
\]
For some \(S_0>0\), the coordinates obtained from the
\(\widehat g_0\)-gradient flow of \(s\) identify the collar with
\[
        (S_0,\infty)_s\times \Gamma_0
\]
and put the \emph{reference} metric in the exact geodesic normal form
\begin{equation}\label{eq:local-ach-product-form}
        \widehat g_0=ds^2+a_s^0.
\end{equation}
Let
\[
        b_s:=\widehat g\big|_{T(\{s\}\times \Gamma_0)}
\]
be the restriction of the rescaled Bergman metric \(\widehat g\) to the
level set of \(s\).
Choose a contact form \(\theta\) on \(\Gamma_0\), put \(H=\ker\theta\), and fix a
positive-definite metric \(h_0\) on \(H\).  After increasing \(S_0\), one has
\begin{equation}\label{eq:ach-tangential-comparison}
        C^{-1}\bigl(e^{2s}h_0+e^{4s}\theta^2\bigr)
        \le b_s
        \le
        C\bigl(e^{2s}h_0+e^{4s}\theta^2\bigr).
\end{equation}
Moreover,
\begin{equation}\label{eq:reference-s-gradient-hessian}
\begin{split}
        \frac12\le |ds|_{\widehat g}\le2, \qquad
        \nabla_{\widehat g}^2s(X,X)
        &\ge c_0|X|_{\widehat g}^2
        \quad\text{if }ds(X)=0,\\
        \left|
        \nabla_{\widehat g}^2s
        (\nabla_{\widehat g}s,W)
        \right|
        &\le Ce^{-s}|W|_{\widehat g}.
\end{split}
\end{equation}
All coefficients of \(\widehat g\) and \(\widehat g^{-1}\), together with
their derivatives through every fixed finite order, are uniformly bounded in
the normalized ACH frame
\[
        \partial_s,\qquad
        e^{-s}Y_1,\ldots,e^{-s}Y_{2n-2},\qquad
        e^{-2s}T,
\]
where the \(Y_j\)'s span \(H\) and \(T\) is the Reeb field.  Finally, for a
fixed smooth positive density \(d\mu_0\) on \(\Gamma_0\),
\begin{equation}\label{eq:ach-volume-comparison}
        C^{-1}e^{2ns}\,ds\,d\mu_0
        \le
        dV_{\widehat g}
        \le
        Ce^{2ns}\,ds\,d\mu_0.
\end{equation}
\end{lemma}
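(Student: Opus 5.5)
The plan is to build everything from the reference metric \(\omega_0=(n+1)\ddbar(-\log r)\) and then treat \(g_B^{\mathrm R}\) as a perturbation that is \(O_\Theta(x)\) in the \(\Theta\)-coframe, as recorded after \eqref{eq:local-ACH-leading}.

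\textbf{Step 1: the defining function \(\widetilde x\).} The constant \(c_B\) is chosen so that \(c_B c_{\mathrm{rad}}\) normalizes the radial coefficient: in \eqref{eq:local-ACH-leading}, \(c_{\mathrm{rad}}\,dr^2/r^2=4c_{\mathrm{rad}}\,dx^2/x^2\), so we take \(c_B=(4c_{\mathrm{rad}})^{-1}\). Then \(|d\log x|_{\widehat g_0}=1+O(x)\). We would solve the eikonal equation \(|d\log\widetilde x|_{\widehat g_0}=1\) with \(\widetilde x=xe^{\varphi}\), following the standard geodesic-defining-function construction for conformally compact and \(\Theta\)-metrics. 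In the ACH setting this is a noncharacteristic first-order PDE for \(\varphi\) in \(\Theta\)-coordinates, solved locally over \(\Gamma_1\) with \(\varphi|_{x=0}=0\). Restricting to \(\overline{\Gamma_0}\Subset\Gamma_1\) keeps the solution defined on a uniform collar, because the characteristics are \(O(x)\)-close to the \(x\partial_x\)-flow. This gives \eqref{eq:reference-defining-function-comparison} together with \(\Theta\)-boundedness of \(\widetilde x/x\) and its reciprocal. The \(\widehat g_0\)-gradient flow of \(s=-\log\widetilde x\) then has unit speed, and its orthogonal level sets yield the normal form \eqref{eq:local-ach-product-form} by the Gauss lemma.

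\textbf{Step 2: tangential comparison and volume.} In the normalized frame \(\partial_s, e^{-s}Y_j, e^{-2s}T\), which is comparable to \(x\partial_x,xY_j,x^2T\) since \(e^{-s}=\widetilde x\asymp x\), both \(\widehat g_0\) and \(\widehat g\) have uniformly bounded coefficients. Their boundary values are the positive-definite complex-hyperbolic model, which gives bounded inverses. Restricting \(\widehat g\) to \(\{s\}\times\Gamma_0\) and using \eqref{eq:local-ACH-leading} with \(r=x^2\asymp e^{-2s}\) yields \eqref{eq:ach-tangential-comparison}, where \(h_0\) is comparable to \(h_\theta\). The volume bound \eqref{eq:ach-volume-comparison} follows from the determinant of the coframe \(ds,\ e^{s}\eta^j,\ e^{2s}\theta\), which is \(e^{(2n-2)s+2s}=e^{2ns}\). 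Finally, \(|ds|_{\widehat g}\in[1/2,2]\) follows from \(|ds|_{\widehat g_0}=1\) and \(\widehat g-\widehat g_0=O_\Theta(x)\) after increasing \(S_0\).

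\textbf{Step 3: the Hessian bounds \eqref{eq:reference-s-gradient-hessian}.} This is the main obstacle. For the model complex hyperbolic metric in horospherical coordinates, \(-\log\) of the height function has Hessian equal to the second fundamental form of horospheres. That form is positive: its eigenvalues are \(1\) on \(H\) and \(2\) on the Reeb direction, and it annihilates \(\nabla s\) since \(\nabla^2 s(\nabla s,\cdot)=0\) for a distance function. To exploit this, we would compute \(\nabla^2_{\widehat g}s\) in the normalized frame. Because \(\nabla^2 s\) involves Christoffel symbols that are \(\Theta\)-smooth, it equals its normal-operator (model) value plus \(O(e^{-s})\) in the normalized frame. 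The difference \(\nabla_{\widehat g}^2s-\nabla_{\widehat g_0}^2 s\) is controlled by \(\widehat g-\widehat g_0=O_\Theta(x)\) and its first \(\Theta\)-derivatives, which are \(O(e^{-s})\). Therefore, for \(X\) with \(ds(X)=0\), we get \(\nabla^2_{\widehat g}s(X,X)\ge(1-Ce^{-s})|X|^2\ge c_0|X|^2\). The mixed term with \(\nabla_{\widehat g} s\) is \(O(e^{-s})\), since it vanishes for the model.

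The delicate point is checking that \(\Theta\)-smoothness gives exactly one power of \(x\) of error in the mixed term, not merely \(o(1)\). If the polyhomogeneous logarithmic correction in \(g_B\) only gives \(O(x^{1-\delta})\), we would note that its order is \(x^{2n+2}\log x\), which is far smaller than \(x\). The leading error comes from \(\ddbar\log\phi\), which is smooth and of \(\Theta\)-order \(x\).
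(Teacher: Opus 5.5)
Your proposal is correct and follows essentially the same route as the paper. Both use the normalization \(c_B=1/(2(n+1))\) and the Guillarmou--S\'a Barreto eikonal construction of \(\widetilde x=xe^{\varphi}\) with \(\varphi|_{x=0}=0\), which yields the geodesic normal form; both then treat \(\widehat g\) as an \(O(e^{-s})\) perturbation of \(\widehat g_0\), using \(\nabla^2_{\widehat g_0}s=\tfrac12\partial_s a_s^0\) and \(\nabla^2_{\widehat g_0}s(\partial_s,\cdot)=0\), to obtain the gradient, Hessian, tangential and volume bounds (your explicit horosphere eigenvalues \(1\) and \(2\) simply make concrete the constant \(c_0\) that the paper gets from convergence to the complex-hyperbolic model).
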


\begin{proof}
By \eqref{eq:local-ACH-leading}, \(g_0^{\mathrm R}\) is a smooth
positive-definite \(\Theta\)-metric with complex-hyperbolic boundary value.
In particular, there is a universal constant \(\kappa_n>0\), independent of
the point of \(\Gamma_0\), such that
\[
        \left.\left|\frac{dx}{x}\right|_{g_0^{\mathrm R}}^2
        \right|_{x=0}=\kappa_n.
\]
The convention above gives
\(\kappa_n=1/(2(n+1))\), since the leading coefficient of
\((dx/x)^2\) in \(g_0^{\mathrm R}\) is \(2(n+1)\).
Choose \(c_B=\kappa_n=1/(2(n+1))\). Then
\[
        \left.\left|\frac{dx}{x}\right|_{\widehat g_0}^2
        \right|_{x=0}=1.
\]
Equivalently, the smooth \(\Theta\)-metric
\[
        \overline g_0:=4\widehat g_0
\]
satisfies hypothesis~(H1) of Guillarmou--S\'a Barreto:
\[
        \left.\left|\frac{dx}{x}\right|_{\overline g_0}
        \right|_{x=0}=\frac12.
\]
Moreover, because \(\overline g_0\) is a smooth ACH \(\Theta\)-metric,
\(x^4\overline g_0|_{T\partial X_\Theta}\) is the square of a contact
form \(\theta_x\) in the CR contact conformal class.  Thus the boundary
representative required in Lemma~3.1 of Guillarmou--S\'a Barreto is already
fixed by \(x\); below, the choice \(\omega_{\partial}=0\) preserves this
representative.  Only hypothesis~(H1) and the proof of their Lemma~3.1 are
used here; the subsequent global ACH hypotheses in that paper play no role.

We now apply that construction on a boundary patch.
Choose \(\Gamma_0'\) with
\(\overline \Gamma_0\Subset \Gamma_0'\Subset\Gamma_1\), and work on a collar of \(\Gamma_0'\).
For a boundary defining function \(x\), let \(X_x\) be the
\(\overline g_0\)-dual of \(dx/x\).  Seeking
\(\widetilde x=e^\omega x\) with prescribed smooth boundary value
\(\omega|_{x=0}=\omega_{\partial}\), the equation
\[
        \left|\frac{d\widetilde x}{\widetilde x}\right|_{\overline g_0}
        =\frac12
\]
is exactly
\begin{equation}\label{eq:GSB-local-eikonal}
        2\frac{X_x(\omega)}{x}
        +\frac{|d\omega|_{\overline g_0}^2}{x}
        =\frac{\frac14-|X_x|_{\overline g_0}^2}{x},
\end{equation}
which is \cite[equation~(3.3)]{GuillarmouSaBarreto2008}.  All three quotients
in \eqref{eq:GSB-local-eikonal} are smooth up to \(x=0\): the middle numerator
is \(O(x^2)\), and the right numerator vanishes at \(x=0\) by~(H1).  Moreover,
\(X_x/x\) is a smooth vector field transverse to the boundary.  Thus
\eqref{eq:GSB-local-eikonal} is a smooth noncharacteristic first-order Cauchy
problem.  The method of characteristics gives a unique smooth solution on a
collar of \(\Gamma_0'\); since \(\overline \Gamma_0\) is compact, one collar width works
uniformly over \(\Gamma_0\).

This is the local form of \cite[Lemma~3.1]{GuillarmouSaBarreto2008}.
Taking \(\omega_{\partial}=0\), the quotient \(\widetilde x/x=e^\omega\), its
reciprocal, and all derivatives of each fixed finite order are bounded on the
smaller collar, proving \eqref{eq:reference-defining-function-comparison}.

Let \(X_{\widetilde x}\) denote the
\(\overline g_0\)-dual of \(d\widetilde x/\widetilde x\).  After decreasing
the collar width, the integral curves of
\(4X_{\widetilde x}/\widetilde x\) issuing from \(\overline \Gamma_0\) remain in
the collar over \(\Gamma_0'\).  The flow construction following
\cite[Lemma~3.1]{GuillarmouSaBarreto2008} therefore gives
\[
        \overline g_0
        =4\frac{d\widetilde x^2}{\widetilde x^2}
          +\frac{h(\widetilde x)}{\widetilde x^2},
\]
with no mixed normal--tangential term; this is their equation~(3.4).  Dividing
by \(4\) and setting \(s=-\log\widetilde x\) gives
\[
        \widehat g_0=ds^2+a_s^0,
\]
which is \eqref{eq:local-ach-product-form}.

In the coframe
\[
        ds,\qquad e^s\eta^1,\ldots,e^s\eta^{2n-2},
        \qquad e^{2s}\theta,
\]
the coefficient matrix of \(\widehat g_0\) converges smoothly to the
positive-definite complex-hyperbolic model matrix.  Consequently
\begin{equation}\label{eq:reference-tangential-comparison}
        C^{-1}\bigl(e^{2s}h_0+e^{4s}\theta^2\bigr)
        \le a_s^0
        \le
        C\bigl(e^{2s}h_0+e^{4s}\theta^2\bigr),
\end{equation}
and, after increasing \(S_0\),
\begin{equation}\label{eq:reference-second-fundamental-form}
        \frac12(\partial_sa_s^0)(X,X)
        \ge 2c_0|X|_{\widehat g_0}^2
        \qquad
        (X\in T(\{s\}\times \Gamma_0)).
\end{equation}
For the metric \(\widehat g_0=ds^2+a_s^0\) in this geodesic normal form,
\[
        \nabla_{\widehat g_0}^2s(X,X)
        =
        \frac12(\partial_sa_s^0)(X,X),
        \qquad
        \nabla_{\widehat g_0}^2s(\partial_s,\cdot)=0.
\]

We now compare \(\widehat g\) with \(\widehat g_0\).
On the fixed collar,
\[
\begin{aligned}
        \log K
        &=
        -(n+1)\log r+\log\phi\\
        &\quad+
        \log\left(
        1+\frac{\psi}{\phi}r^{n+1}\log r
        \right).
\end{aligned}
\]
Hence
\begin{equation}\label{eq:actual-reference-metric-difference}
        \widehat g-\widehat g_0
        =
        c_B\bigl(\ddbar\log\phi\bigr)^{\mathrm R}
        +
        c_B\left(
        \ddbar
        \log\left(
        1+\frac{\psi}{\phi}r^{n+1}\log r
        \right)
        \right)^{\mathrm R}.
\end{equation}
An ordinary smooth covariant two-tensor has coefficients vanishing to
positive order in the \(\Theta\)-coframe.  The second term in
\eqref{eq:actual-reference-metric-difference} has still higher positive
order.  Therefore, in the normalized \(\Theta\)-frame associated with this
geodesic normal form,
\begin{equation}\label{eq:actual-reference-decay}
        \left|
        \nabla_{\widehat g_0}^{\,j}
        (\widehat g-\widehat g_0)
        \right|_{\widehat g_0}
        \le C_je^{-s},
        \qquad j=0,1,\ldots,L,
\end{equation}
for every fixed finite \(L\), after increasing \(S_0\).  Logarithmic factors
cause no difficulty, since \(s^qe^{-ms}\le C_{q,m}e^{-s}\) when \(m\ge2\)
and \(s\) is large.

The estimates for inverses, volume forms, and Levi-Civita connections follow
from \eqref{eq:actual-reference-decay}.  In particular,
\[
        |\widehat g^{-1}-\widehat g_0^{-1}|_{\widehat g_0}
        +
        |\nabla^{\widehat g}-\nabla^{\widehat g_0}|_{\widehat g_0}
        \le Ce^{-s}.
\]
The tangential comparison
\eqref{eq:ach-tangential-comparison} follows from
\eqref{eq:reference-tangential-comparison}.  Also
\[
        |ds|_{\widehat g}=1+O(e^{-s}).
\]
For \(ds(X)=0\),
\[
\begin{aligned}
        \nabla_{\widehat g}^2s(X,X)
        &=
        \nabla_{\widehat g_0}^2s(X,X)
        -
        ds\bigl(
        (\nabla^{\widehat g}-\nabla^{\widehat g_0})_XX
        \bigr)\\
        &\ge
        2c_0|X|_{\widehat g_0}^2
        -
        Ce^{-s}|X|_{\widehat g_0}^2,
\end{aligned}
\]
which gives the first Hessian inequality in
\eqref{eq:reference-s-gradient-hessian}.  Since
\(\nabla_{\widehat g}s=\partial_s+O(e^{-s})\) in the reference bounded frame
and
\(\nabla_{\widehat g_0}^2s(\partial_s,\cdot)=0\), the same comparison gives
the second inequality in
\eqref{eq:reference-s-gradient-hessian}.

The normalized-coordinate bounds follow from the smooth ACH bounded geometry
of \(\widehat g_0\) and \eqref{eq:actual-reference-decay}.  Finally, there
are \(2n-2\) horizontal directions with length scale \(e^s\), one Reeb
direction with length scale \(e^{2s}\), and one normal direction with bounded
length.  This gives \eqref{eq:ach-volume-comparison}.
\end{proof}

For later use, we make the phrase \emph{normalized ACH coordinate chart}
precise.  Fix a finite contact-adapted atlas on a neighborhood of
\(\overline \Gamma_0\).  On each member of this atlas choose smooth vector fields
\[
        Y_1,\ldots,Y_{2n-2},T,
\]
where the \(Y_j\)'s span the Levi distribution and \(T\) is the Reeb field.
If \(q=(s_q,y_q)\) lies in the corresponding collar, let
\(\mathcal F_{y_q}(z,t)\) denote the local map obtained by composing, in a
fixed order, the time-\(z_j\) flows of the \(Y_j\)'s and the time-\(t\) flow
of \(T\), starting at \(y_q\).  We call a chart of the form
\begin{equation}\label{eq:normalized-ach-flow-chart}
        \Phi_q(\sigma,z,t)
        =
        \bigl(
        s_q+\sigma,
        \mathcal F_{y_q}(e^{-s_q}z,e^{-2s_q}t)
        \bigr)
\end{equation}
a normalized ACH flow chart centered at \(q\).

At the center, the differential of \(\Phi_q\) identifies the Euclidean
coordinate frame with
\[
        \partial_s,
        \qquad e^{-s_q}Y_1,\ldots,e^{-s_q}Y_{2n-2},
        \qquad e^{-2s_q}T.
\]
Put $\lambda=e^{-s_q}$ and write $E_j=\lambda Y_j$ and
$E_T=\lambda^2T$. Work in finitely many nested background charts whose
smaller members cover $\overline\Gamma_0$. The unscaled flow maps
$\mathcal F_y$ are injective on a common neighborhood of the origin;
for large $s_q$, the scaled parameter boxes lie in that neighborhood.
Moreover, the structure coefficients of the frame $(E_j,E_T)$ and
their derivatives along this frame are uniformly bounded. Indeed,
the horizontal and Reeb components of $[E_i,E_j]$ have coefficients
$O(\lambda)$ and $O(1)$ in this frame, respectively, while those of
$[E_i,E_T]$ have coefficients $O(\lambda^2)$ and $O(\lambda)$.

Express the differential of each composed flow in the scaled frame
at its endpoint. The pushforward equations are linear ordinary
differential equations with uniformly bounded coefficients and
initial matrix equal to the identity. On a sufficiently small fixed
parameter ball their solutions and inverses are therefore uniformly
bounded. Differentiating these equations gives the same conclusion
for every fixed number of parameter derivatives. Since
$|s-s_q|=|\sigma|$ is bounded on this ball, the scales at $s$ and
$s_q$ are uniformly comparable. Combining these frame comparisons
with the coefficient estimates above proves that $\Phi_q^*\widehat g$
and its inverse are uniformly elliptic and have uniformly bounded
derivatives of every fixed finite order, on a radius independent of $q$.

\begin{lemma}
\label{lem:convex-graph-end}
Choose a smoothly bounded connected open set
\[
        Y\Subset \Gamma_0
\]
and a smooth defining function \(\delta\) for \(Y\), normalized so that
\[
        Y=\{\delta>0\},
        \qquad
        d\delta\neq0\text{ on }\partial Y,
        \qquad
        0<\delta<1\text{ on }Y.
\]
Put
\[
        \varphi=-\log\delta.
\]
For \(S\ge S_0\), define
\[
        E_S
        :=
        \left\{
        (s,y)\in(S_0,\infty)\times Y:
        s>S+\varphi(y)
        \right\}
\]
and
\begin{equation}\label{eq:graph-weight}
        \tau(s,y)
        :=
        s-S-\varphi(y).
\end{equation}
For all sufficiently large \(S\), the following statements hold.

\begin{enumerate}[label=(\roman*)]
\item On \(E_S\),
\begin{equation}\label{eq:x-delta-comparison}
        \widetilde x=e^{-s}\le e^{-S}\delta(y).
\end{equation}

\item There are constants \(c_1>0\) and \(C_1>0\), independent of the
point of \(E_S\), such that
\begin{equation}\label{eq:graph-weight-gradient}
        \frac12\le |d\tau|_{\widehat g}\le2
\end{equation}
and
\begin{equation}\label{eq:graph-tangential-convexity}
        \nabla_{\widehat g}^2\tau(V,V)
        \ge c_1|V|_{\widehat g}^2
        \qquad
        \text{whenever }d\tau(V)=0.
\end{equation}
Moreover,
\begin{equation}\label{eq:graph-normal-hessian}
        \left|
        \nabla_{\widehat g}^2\tau
        (\nabla_{\widehat g}\tau,\nabla_{\widehat g}\tau)
        \right|
        \le C_1e^{-S}.
\end{equation}

\item The hypersurface
\[
        \mathcal N_S:=\{\tau=0\}
\]
is complete with its induced metric.

\item There are fixed constants \(R_0>0\) and
\(0<r_{\mathrm{loc}}<R_0\) such that, for every \(q\in E_S\), there is a
normalized ACH flow chart in the sense of
\eqref{eq:normalized-ach-flow-chart},
\[
        \Phi_q:B(0,2R_0)\longrightarrow (S_0,\infty)\times Y,
        \qquad \Phi_q(0)=q,
\]
whose image lies in the ambient collar and such that
\begin{equation}\label{eq:chart-ball-inclusion}
        B_{\widehat g}(q,2r_{\mathrm{loc}})
        \Subset
        \Phi_q\bigl(B(0,R_0)\bigr)
        \Subset
        \Phi_q\bigl(B(0,2R_0)\bigr).
\end{equation}
The chart need not be contained in \(E_S\).  In these charts the pulled-back
metric and its inverse are uniformly elliptic, their derivatives have uniform bounds of every fixed finite order, and every
positive-order derivative of \(\tau\circ\Phi_q\) has a uniform bound of each
fixed finite order.  Moreover,
the estimates
\eqref{eq:graph-weight-gradient}--\eqref{eq:graph-normal-hessian}, with
possibly different uniform constants, hold throughout \(\Phi_q(B(0,2R_0))\).
\end{enumerate}
\end{lemma}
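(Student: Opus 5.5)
Everything is read off from Lemma~\ref{lem:local-ach-product} together with the structure of $\varphi=-\log\delta$. Part (i) is immediate. On $E_S$ we have $s>S+\varphi(y)=S-\log\delta(y)$, so $\widetilde x=e^{-s}<e^{-S}\delta(y)$. In particular $E_S$ lies at $s>S$, and points of $E_S$ near $\partial Y$ are pushed to $s\to\infty$, since $\varphi\to+\infty$ there.

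For (ii) we write $\tau=s-S-\varphi$ and estimate $d\varphi$ and $\nabla^2\varphi$ in the metric $\widehat g$. The key observation is that $\varphi$ depends only on $y$, while tangential directions are enormously stretched. By \eqref{eq:ach-tangential-comparison}, a covector $\xi$ on $\Gamma_0$ has
\[
|\xi|_{\widehat g}\lesssim e^{-s}|\xi|_{h_0\oplus\theta^2}.
\]
We have $d\varphi=-d\delta/\delta$, and on $E_S$ the bound $\delta>e^{S-s}$ holds. This gives
\[
|d\varphi|_{\widehat g}\lesssim e^{-s}\delta^{-1}\lesssim e^{-S}.
\]
Hence \eqref{eq:graph-weight-gradient} follows from $\tfrac12\le|ds|\le2$ in \eqref{eq:reference-s-gradient-hessian}, after sharpening to $|ds|_{\widehat g}=1+O(e^{-s})$ and taking $S$ large.

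For the Hessian we compute $\nabla^2\varphi=-\nabla^2\delta/\delta+d\delta\otimes d\delta/\delta^2$. The second term is positive semidefinite and helps convexity, since $\tau$ carries $-\varphi$. The ordinary second derivatives of $\delta$ contribute $O(e^{-2s}\delta^{-1})$ in normalized norm. The Christoffel contribution $d\delta(\nabla_VV)$ must be handled using the bounded normalized-frame connection coefficients: in that frame $d\varphi$ has coefficients $O(e^{-s}/\delta)=O(e^{-S})$, so $|\nabla^2\varphi|_{\widehat g}\lesssim e^{-S}$ up to the favourable semidefinite term. Tangential convexity \eqref{eq:graph-tangential-convexity} then follows from the $c_0$-convexity of $s$ on $\{ds=0\}$, after correcting $V$ with $d\tau(V)=0$ to $ds(V)=d\varphi(V)=O(e^{-S}|V|)$ and absorbing cross terms with the bound $|\nabla^2 s(\nabla s,\cdot)|\le Ce^{-s}$. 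The normal Hessian bound \eqref{eq:graph-normal-hessian} follows from the same estimates, since $\nabla\tau=\nabla s+O(e^{-S})$.

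For (iii), $\mathcal N_S$ is the graph $s=S+\varphi(y)$ over $Y$. A path leaving every compact subset of $\mathcal N_S$ must approach $\partial Y$, where $s\to\infty$; paths approaching $s=S_0$ are impossible because $\varphi>0$. The induced length of such a path is at least its $s$-variation times a fixed constant. Indeed, the induced metric dominates $ds^2$ restricted to the graph up to the factor $(1+|d\varphi|^2)$. More precisely, the graph metric is $\ge c\,(ds|_{\mathcal N})^2$ because $|ds|_{\widehat g}\le 2$. So the length is infinite, and completeness follows by Hopf--Rinow, since closed bounded sets stay in $\{\delta\ge\epsilon\}$, which is compact.

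For (iv) we use the normalized ACH flow charts constructed before the lemma. These have a uniform radius on the whole collar, provided $s_q$ is large, and we arrange this by taking $S$ large so that $s_q>S$. The chart may exit $E_S$ but stays in $(S_0,\infty)\times Y'$ for a slightly larger $Y'$. To guarantee this we note that a normalized box of radius $2R_0$ moves $y$ by only $O(e^{-s_q})$, while $\delta(y_q)>e^{S-s_q}\ge e^{-s_q}$ for $S\ge0$. Choosing $R_0$ small relative to $|d\delta|$ keeps $\delta$ comparable to $\delta(y_q)$ on the box, so the chart stays in $Y$ (after shrinking constants). The ball inclusion follows from uniform ellipticity. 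Derivative bounds for $\tau\circ\Phi_q$ reduce to derivatives of $\varphi$ in the scaled frame. Here a $k$-fold derivative has size $e^{-ks_q}\delta^{-k}\lesssim e^{-kS}$, using $\delta\sim\delta(y_q)$ on the box, together with the bounded $s$-derivatives. The estimates of (ii) persist on the box by the same computation.

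The main obstacle is the Hessian bound for $\varphi$ near $\partial Y$, where $\delta\to0$. The whole argument rests on the lower bound $\delta\gtrsim e^{S-s}$, which converts every factor $\delta^{-1}$ into a gain $e^{-S}$ when paired with the tangential scaling $e^{-s}$. The comparability of $\delta$ across a chart rests on the same bound, and it is the delicate point in (iv).
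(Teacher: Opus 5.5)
Your proposal follows the paper's route in all four parts, with one sign slip in (ii) that is easily repaired.

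\begin{itemize}
\item Part (i) is immediate.
\item Part (ii) rests on $|d\varphi|_{\widehat g}+|\nabla^2_{\widehat g}\varphi|_{\widehat g}\lesssim e^{-S}$, obtained from $e^{-s}\delta^{-1}\le e^{-S}$ and the ACH scaling of the tangential frame. It then perturbs the convexity of $s$ on $\{ds=0\}$ exactly as the paper does.
\item Part (iv) uses the normalized flow charts with $\delta\asymp\delta(y_q)$ across each chart. You secure this by taking $R_0$ small; the paper takes $S$ large. Either works.
\end{itemize}

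\textbf{The sign slip in (ii).} Since $\nabla^2\varphi=-\nabla^2\delta/\delta+d\varphi\otimes d\varphi$ and $\tau=s-S-\varphi$, the semidefinite term enters $\nabla^2\tau$ as $-d\varphi\otimes d\varphi$. It therefore works against convexity, not for it, so the phrase ``up to the favourable semidefinite term'' cannot be relied on. The term is nonetheless harmless: $|d\varphi\otimes d\varphi|_{\widehat g}=|d\varphi|_{\widehat g}^2\lesssim e^{-2S}$ by your own first-derivative estimate. You need this two-sided bound anyway, because \eqref{eq:graph-normal-hessian} is an absolute-value estimate. So simply include the term in $|\nabla^2\varphi|_{\widehat g}\lesssim e^{-S}$. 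The same remark applies when you transfer the estimates of (ii) to the full chart in (iv).

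\textbf{Part (iii).} Your argument is a slightly more economical variant of the paper's. You use the ambient bound $|W|_{\widehat g}\ge\tfrac12|ds(W)|$, so the induced length of a curve in $\mathcal N_S$ is at least half the variation of $s=S-\log\delta$ along it. The paper instead uses the tangential comparison $b_s\ge c\,e^{2s}h_0$ to bound the length below by $c\,e^{S}$ times the total variation of $\log\delta$. Both give infinite length toward $\partial Y$ and hence completeness. Your Hopf--Rinow formulation, in which bounded sets lie in $\{\delta\ge\epsilon\}$, is fine.
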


\begin{proof}
The first assertion is immediate from
\[
        s>S-\log\delta(y).
\]

Let \(Z_1,\ldots,Z_{2n-1}\) be a fixed smooth frame on a neighborhood of
\(\overline Y\).  For every multiindex \(\alpha\neq0\),
\[
        |Z^\alpha\varphi|
        \le C_\alpha\delta^{-|\alpha|}.
\]
On \(E_S\), \eqref{eq:x-delta-comparison} gives
\[
        e^{-s}\delta^{-1}\le e^{-S}.
\]
Since the bounded-length tangential fields are of the form
\[
        e^{-s}Y_j,\qquad e^{-2s}T,
\]
the preceding estimates and the uniform ACH connection bounds imply
\begin{equation}\label{eq:small-phi-derivatives}
        |d\varphi|_{\widehat g}
        +
        |\nabla_{\widehat g}^2\varphi|_{\widehat g}
        \le Ce^{-S}.
\end{equation}
The same argument gives uniform estimates for every fixed finite number of
higher covariant derivatives.

By Lemma~\ref{lem:local-ach-product},
\begin{equation}\label{eq:hessian-s-convexity}
        \nabla_{\widehat g}^2s(X,X)
        \ge c_0|X|_{\widehat g}^2
        \qquad\text{whenever }ds(X)=0.
\end{equation}

Since
\[
        d\tau=ds-d\varphi,
\]
estimate \eqref{eq:graph-weight-gradient} follows from
\eqref{eq:small-phi-derivatives} and
\eqref{eq:reference-s-gradient-hessian}.  Let \(V\) satisfy
\(d\tau(V)=0\).  Decompose it orthogonally as
\[
        V=a\nabla_{\widehat g}s+X,
        \qquad
        ds(X)=0.
\]
Then
\[
        0=d\tau(V)
        =
        a|ds|_{\widehat g}^2
        -
        a\,d\varphi(\nabla_{\widehat g}s)
        -
        d\varphi(X).
\]
Using \eqref{eq:small-phi-derivatives} and
\(\frac12\le |ds|_{\widehat g}\le2\), we obtain
\[
        |a|\le Ce^{-S}|X|_{\widehat g}.
\]
The mixed Hessian estimate in
\eqref{eq:reference-s-gradient-hessian} gives
\[
\begin{aligned}
        \nabla_{\widehat g}^2s(V,V)
        &=
        \nabla_{\widehat g}^2s(X,X)
        +
        2a\nabla_{\widehat g}^2s
        (\nabla_{\widehat g}s,X)\\
        &\quad+
        a^2\nabla_{\widehat g}^2s
        (\nabla_{\widehat g}s,\nabla_{\widehat g}s)\\
        &\ge
        c_0|X|_{\widehat g}^2
        -
        Ce^{-S}|V|_{\widehat g}^2.
\end{aligned}
\]
Subtracting
\(\nabla_{\widehat g}^2\varphi(V,V)\) and using
\eqref{eq:small-phi-derivatives} yields
\[
        \nabla_{\widehat g}^2\tau(V,V)
        \ge
        c_0|X|_{\widehat g}^2
        -
        Ce^{-S}|V|_{\widehat g}^2.
\]
For \(S\) sufficiently large, \(|V|_{\widehat g}\) and
\(|X|_{\widehat g}\) are uniformly equivalent, and
\eqref{eq:graph-tangential-convexity} follows.

Furthermore,
\[
        \nabla_{\widehat g}\tau
        =
        \nabla_{\widehat g}s-\nabla_{\widehat g}\varphi.
\]
The mixed Hessian estimate for \(s\), together with
\eqref{eq:small-phi-derivatives}, therefore gives
\[
        \left|
        \nabla_{\widehat g}^2\tau
        (\nabla_{\widehat g}\tau,\nabla_{\widehat g}\tau)
        \right|
        \le Ce^{-S},
\]
which is \eqref{eq:graph-normal-hessian}.

The parametrization $y\mapsto(S+\varphi(y),y)$ identifies
$\mathcal N_S$ with $Y$. For $W\in T_yY$, its tangent lift is
$\widetilde W=W+d\varphi(W)\partial_s$.
By \eqref{eq:small-phi-derivatives} and the metric comparison, the
induced metric satisfies
\[
        g_{\mathcal N_S}(W,W)
        \ge c\,b_{S+\varphi}(W,W)
        \ge ce^{2S}\delta^{-2}|W|_{h_*}^2,
\]
where $h_*$ is a fixed smooth metric on $\overline Y$.
Since $|d\delta|_{h_*}$ is bounded, the length of a curve tending
to $\partial Y$ is bounded below by a positive multiple of the
total variation of $\log\delta$, which is infinite. Thus
$\mathcal N_S$ is complete.

Finally, let \(q=(s_q,y_q)\in E_S\).  Since
\[
        e^{-s_q}\delta(y_q)^{-1}\le e^{-S},
\]
and a smooth defining function is comparable to the distance to
\(\partial Y\) near the boundary, the horizontal displacement
\(O(e^{-s_q})\) and the Reeb displacement \(O(e^{-2s_q})\) in the
normalized flow chart are both smaller than a fixed small multiple of
\(\operatorname{dist}(y_q,\partial Y)\) once \(S\) is large.  Using the
finite contact-adapted atlas fixed above, we may therefore choose a single
\(R_0>0\) such that every chart
\[
        \Phi_q:B(0,2R_0)\longrightarrow(S_0,\infty)\times Y
\]
is well defined and remains in the ambient collar.

On this fixed coordinate ball,
\[
        |s-s_q|\le C,
        \qquad
        C^{-1}\delta(y_q)\le\delta(y)\le C\delta(y_q),
\]
and hence
\[
        e^{-s}\delta(y)^{-1}\le Ce^{-S}.
\]
The estimates used in \eqref{eq:small-phi-derivatives}, and their
higher-order analogues through every fixed finite order, therefore hold
uniformly on the full ambient chart, including its part outside \(E_S\).
The polyhomogeneous ACH expansion and the construction of the normalized
flow charts give uniform ellipticity and uniform bounds for all fixed finite
coordinate derivatives of the pulled-back metric and its inverse.  The same
computations proving
\eqref{eq:graph-weight-gradient}--\eqref{eq:graph-normal-hessian} hence hold
throughout the chart.

Uniform ellipticity also gives a fixed positive lower bound for the
\(\widehat g\)-distance from \(q\) to the image of
\(\partial B(0,R_0)\).  Decreasing a fixed
\(r_{\mathrm{loc}}>0\), independently of \(q\), we therefore obtain
\[
        B_{\widehat g}(q,2r_{\mathrm{loc}})
        \Subset
        \Phi_q\bigl(B(0,R_0)\bigr),
\]
which is \eqref{eq:chart-ball-inclusion} and completes the proof.
\end{proof}

\begin{proposition}
\label{prop:theta-carleman}
Let \(E_S\) and \(\tau\) be as in
Lemma~\ref{lem:convex-graph-end}, with \(S\) sufficiently large.  Set
\[
        P=-\Delta_{\widehat g},
\]
where the sign is chosen so that
\[
        \sigma_2(P)(z,\xi)=|\xi|_{\widehat g}^2.
\]
Then there are constants \(C>0\) and \(\gamma_0>0\) such that
\begin{equation}\label{eq:graph-carleman}
        \gamma^3
        \|e^{\gamma\tau}w\|_{L^2(E_S,dV_{\widehat g})}^2
        +
        \gamma
        \|e^{\gamma\tau}\nabla_{\widehat g}w\|_
        {L^2(E_S,dV_{\widehat g})}^2
        \le
        C
        \|e^{\gamma\tau}Pw\|_{L^2(E_S,dV_{\widehat g})}^2
\end{equation}
for every \(w\in C_c^\infty(E_S)\) and every
\(\gamma\ge\gamma_0\).
\end{proposition}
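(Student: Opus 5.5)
The plan is to prove \eqref{eq:graph-carleman} first locally, in the normalized ACH flow charts of Lemma~\ref{lem:convex-graph-end}(iv), using H\"ormander's Carleman estimate \cite[Theorem~28.2.3]{HormanderIV}. Then we patch the local estimates together with a uniformly controlled partition of unity.

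\textbf{Local estimate.} Fix $q\in E_S$ and pull everything back by $\Phi_q$ to $B(0,2R_0)\subset\mathbb R^{2n}$. There $P$ becomes a second-order operator with real principal symbol $|\xi|^2_{\widehat g}$. Its coefficients are uniformly elliptic and uniformly bounded in $C^k$ for each fixed $k$, independently of $q$. The weight $\tau\circ\Phi_q$ has all positive-order derivatives uniformly bounded, and $|d\tau|_{\widehat g}\ge1/2$. For $p=|\xi|^2_{\widehat g}$ and weight $\tau$, H\"ormander's condition (28.2.6) is strong pseudoconvexity of the level sets of $\tau$ with respect to $p$. Since $p$ is real, on the characteristic set of $p_\gamma(\xi)=p(\xi+i\gamma d\tau)$ one computes
\[
\{\bar p_\gamma,p_\gamma\}/(2i)=4\gamma\bigl(\nabla^2\tau(\xi^\sharp,\xi^\sharp)+\gamma^2\nabla^2\tau(\nabla\tau,\nabla\tau)\bigr).
\]
The characteristic set forces $\langle\xi,d\tau\rangle=0$ and $|\xi|=\gamma|d\tau|$. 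So \eqref{eq:graph-tangential-convexity} makes the first term at least $c_1\gamma^2|d\tau|^2$. By \eqref{eq:graph-normal-hessian}, the second term is at most $C_1e^{-S}\gamma^2$ in absolute value. Hence for $S$ large the bracket is bounded below by $c\gamma^2$, uniformly in $q$. This holds throughout the chart, as guaranteed by Lemma~\ref{lem:convex-graph-end}(iv). Because this positivity is already quantitative (the standard $\gamma$-convexification is not needed, since the bracket is positive for every $\gamma$), H\"ormander's theorem yields
\[
\gamma^3\|e^{\gamma\tau}w\|^2+\gamma\|e^{\gamma\tau}\nabla w\|^2\le C\|e^{\gamma\tau}Pw\|^2
\]
for $w\in C_c^\infty(\Phi_q(B(0,R_0)))$ and $\gamma\ge\gamma_0$. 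The constants $C,\gamma_0$ depend only on the uniform bounds. We first use Euclidean norms in the chart. Uniform ellipticity and the bounded density of $dV_{\widehat g}$ relative to Lebesgue measure let us pass to $\widehat g$-norms. The volume factor $e^{2ns}$ in \eqref{eq:ach-volume-comparison} is harmless here: we pull back $dV_{\widehat g}$ itself, which has bounded density in the chart coordinates. One point needs care. H\"ormander's constants depend on $\tau$ only through finitely many derivatives, but $\tau$ itself is unbounded. Replacing $\tau$ by $\tau-\tau(q)$ multiplies both sides by the same factor $e^{2\gamma\tau(q)}$, so the estimate is unaffected.

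\textbf{Patching.} Choose a cover of $E_S$ by the balls $B_{\widehat g}(q_k,r_{\mathrm{loc}})$ with bounded overlap multiplicity. This is possible by the uniform geometry (bounded geometry gives uniform volume doubling at scale $r_{\mathrm{loc}}$). Take a subordinate partition $\sum\chi_k^2=1$ with $|\nabla\chi_k|+|\nabla^2\chi_k|\le C$. Apply the local estimate to $\chi_kw$ and sum over $k$. The commutator $[P,\chi_k]w$ is bounded by $C(|\nabla w|+|w|)$ on $\operatorname{supp}\chi_k$. After summation it contributes $C(\|e^{\gamma\tau}\nabla w\|^2+\|e^{\gamma\tau}w\|^2)$, which is absorbed into the left-hand side once $\gamma$ is large. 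The term $\sum\|e^{\gamma\tau}\chi_k\nabla w\|^2$ controls $\|e^{\gamma\tau}\nabla w\|^2$ up to a similar error, since $\nabla(\chi_kw)=\chi_k\nabla w+w\nabla\chi_k$. This gives \eqref{eq:graph-carleman}.

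\textbf{Main obstacle.} The main difficulty is the uniformity of H\"ormander's constants across charts. Each local estimate must hold with the same constants, even though charts near $\partial Y$ or deep in $s$ lie partly outside $E_S$. This is handled by Lemma~\ref{lem:convex-graph-end}(iv), which gives the convexity bounds on the full chart. We need to check that the constants in \cite[Theorem~28.2.3]{HormanderIV} depend only on finitely many $C^k$ norms of the coefficients and of $d\tau$, together with the lower bound in (28.2.6), on a fixed ball. If this dependence is not stated explicitly in the reference, we would verify it by rerunning the standard integration-by-parts proof of the estimate in the chart.
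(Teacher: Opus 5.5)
Your proposal is correct and follows essentially the same route as the paper: uniform local H\"ormander estimates in the normalized ACH charts with the shifted weight $\tau-\tau(q)$ (using the full-chart convexity bounds of Lemma~\ref{lem:convex-graph-end}(iv)), then summation over a uniformly locally finite quadratic partition of unity with commutator absorption. The one step the paper writes out and you only assert is the verification, with constants uniform in $q$, of H\"ormander's full quantitative condition (28.2.6) off the characteristic set, which it does by splitting phase space into $|p_\gamma|\le\varepsilon(|\xi|^2+\gamma^2)$ and its complement. Also note a small slip: your bracket lower bound should be $c\gamma^3$, not $c\gamma^2$; it is the parenthesized factor, before multiplication by $4\gamma$, that is bounded below by $c\gamma^2$.
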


\begin{proof}
We first verify strong pseudoconvexity of the weight \(\tau\).  The principal
symbol of the conjugated operator
\[
        e^{\gamma\tau}Pe^{-\gamma\tau}
\]
is
\[
        p_{\tau,\gamma}(z,\xi)
        =
        |\xi|_{\widehat g}^2
        -
        \gamma^2|d\tau|_{\widehat g}^2
        +
        2\sqrt{-1}\gamma
        \langle\xi,d\tau\rangle_{\widehat g}.
\]
On the complex characteristic set
\[
        \operatorname{Re}p_{\tau,\gamma}
        =
        \operatorname{Im}p_{\tau,\gamma}
        =
        0,
\]
we have
\begin{equation}\label{eq:conjugated-characteristic-set}
        \langle\xi,d\tau\rangle_{\widehat g}=0,
        \qquad
        |\xi|_{\widehat g}^2
        =
        \gamma^2|d\tau|_{\widehat g}^2.
\end{equation}
The invariant Poisson-bracket computation gives
\begin{equation}\label{eq:graph-poisson-bracket}
\begin{split}
        \{
        \operatorname{Re}p_{\tau,\gamma},
        \operatorname{Im}p_{\tau,\gamma}
        \}
        =
        4\gamma\bigl(
        &\nabla_{\widehat g}^2\tau
          (\xi^\sharp,\xi^\sharp)  \\
        &+
        \gamma^2
        \nabla_{\widehat g}^2\tau
        (\nabla_{\widehat g}\tau,\nabla_{\widehat g}\tau)
        \bigr).
\end{split}
\end{equation}
By the first equation in
\eqref{eq:conjugated-characteristic-set}, \(\xi^\sharp\) is tangent to a
level hypersurface of \(\tau\).  Lemma~\ref{lem:convex-graph-end} therefore
gives
\[
        \nabla_{\widehat g}^2\tau
        (\xi^\sharp,\xi^\sharp)
        \ge
        c_1|\xi|_{\widehat g}^2.
\]
Using \eqref{eq:graph-weight-gradient},
\eqref{eq:graph-normal-hessian}, and the second equation in
\eqref{eq:conjugated-characteristic-set}, we obtain
\[
\begin{aligned}
        &
        \nabla_{\widehat g}^2\tau
        (\xi^\sharp,\xi^\sharp)
        +
        \gamma^2
        \nabla_{\widehat g}^2\tau
        (\nabla_{\widehat g}\tau,\nabla_{\widehat g}\tau) \\
        &\hspace{2cm}
        \ge
        \left(c_2-Ce^{-S}\right)\gamma^2.
\end{aligned}
\]
After increasing \(S\), the right-hand side is bounded below by
\(c_3\gamma^2\).  Consequently,
\begin{equation}\label{eq:uniform-strong-pseudoconvexity}
        \{
        \operatorname{Re}p_{\tau,\gamma},
        \operatorname{Im}p_{\tau,\gamma}
        \}
        \ge
        c_4\gamma^3
\end{equation}
on the conjugated characteristic set, uniformly on \(E_S\).

Let \(v\in C_c^\infty(E_S)\) be supported in
\[
        B_{\widehat g}(q,r_{\mathrm{loc}}).
\]
Use the chart in Lemma~\ref{lem:convex-graph-end}(iv), and define
\[
        \widetilde v=v\circ\Phi_q,
        \qquad
        P_q=\Phi_q^*P,
        \qquad
        \vartheta_q=\tau\circ\Phi_q-\tau(q).
\]
By \eqref{eq:chart-ball-inclusion},
\[
        \operatorname{supp}\widetilde v
        \Subset B(0,R_0).
\]
We apply the local estimate on the single fixed compact set
\[
        \mathcal K:=\overline{B(0,3R_0/2)}
        \Subset X_1:=B(0,7R_0/4)
        \Subset B(0,2R_0),
\]
which is independent of \(q\) and \(v\).  The subtraction of the constant
\(\tau(q)\) does not change any derivative of the weight or its
pseudoconvexity.  All symbol estimates below are proved on the fixed open set
\(X_1\); in particular they hold on a neighborhood of \(\mathcal K\).

In this fixed-chart calculation, \(x\in X_1\) denotes the full
coordinate vector and \(D_j=-\sqrt{-1}\,\partial_{x_j}\).
Let \(p_q\) be the real quadratic principal symbol of \(P_q\), and set
\[
        \mathfrak p_{q,\gamma}(x,\xi)
        :=p_q\bigl(x,\xi+\sqrt{-1}\gamma
        d\vartheta_q(x)\bigr).
\]
Thus \(\mathfrak p_{q,\gamma}=p_{q,\vartheta_q,\gamma}\).  Define
\begin{equation}\label{eq:hormander-bracket-term}
\begin{split}
        \mathfrak b_q(x,\xi,\gamma)
        &:=\frac{1}{2\sqrt{-1}}
        \left\{
        p_q\bigl(x,\xi-\sqrt{-1}\gamma d\vartheta_q(x)\bigr),
        p_q\bigl(x,\xi+\sqrt{-1}\gamma d\vartheta_q(x)\bigr)
        \right\}                                                   \\
        &=\left\{
        \operatorname{Re}\mathfrak p_{q,\gamma},
        \operatorname{Im}\mathfrak p_{q,\gamma}
        \right\}.
\end{split}
\end{equation}
The second equality uses that \(p_q\) is real.  Pulling back
\eqref{eq:uniform-strong-pseudoconvexity} and using the full-chart estimates
in Lemma~\ref{lem:convex-graph-end}(iv), after increasing \(S\) once more if
necessary, gives
\begin{equation}\label{eq:uniform-chart-pseudoconvexity}
        \mathfrak b_q(x,\xi,\gamma)
        \ge c_5\gamma^3
\end{equation}
on \(\{\mathfrak p_{q,\gamma}=0\}\) over \(X_1\), uniformly in \(q\) and
\(\gamma\ge1\).  The operators \(P_q\) are uniformly elliptic on \(X_1\), and
the relevant coefficient and weight seminorms are uniformly bounded there.

We next verify H\"ormander's full quantitative condition~(28.2.6), rather
than only its restriction to the conjugated characteristic set.  All norms in
the following argument are Euclidean coordinate norms; uniform ellipticity
makes them uniformly equivalent to the corresponding pulled-back metric
norms.  Put
\[
        \Lambda=(|\xi|^2+\gamma^2)^{1/2}.
\]
The bounds for \(d\vartheta_q\), the uniform \(C^2\)-bounds for the metric and
weight, and the bracket formula \eqref{eq:graph-poisson-bracket} imply
\begin{equation}\label{eq:hormander-rough-bounds}
\begin{split}
        &\sum_{|\alpha|<2}
        \gamma^{2(2-|\alpha|)-1}
        \left|
        \bigl(\xi+\sqrt{-1}\gamma d\vartheta_q(x)\bigr)^\alpha
        \right|^2
        \le C\Lambda^3,                                             \\
        &|\mathfrak b_q(x,\xi,\gamma)|\le C\Lambda^3.
\end{split}
\end{equation}
The constants are independent of \(q\).

We claim that there are \(\varepsilon>0\), \(c>0\), and a sufficiently
large \(S\), all independent of \(q\), such that
\begin{equation}\label{eq:near-characteristic-bracket}
        |\mathfrak p_{q,\gamma}(x,\xi)|\le\varepsilon\Lambda^2
        \quad\Longrightarrow\quad
        \mathfrak b_q(x,\xi,\gamma)\ge c\Lambda^3.
\end{equation}
To prove the claim, write \(h_q=\Phi_q^*\widehat g\),
\(N_q=\nabla_{h_q}\vartheta_q\), and
\[
\begin{split}
        \operatorname{Re}\mathfrak p_{q,\gamma}
        &=|\xi|_{h_q}^2-
          \gamma^2|d\vartheta_q|_{h_q}^2,                            \\
        \operatorname{Im}\mathfrak p_{q,\gamma}
        &=2\gamma\langle\xi,d\vartheta_q\rangle_{h_q}.
\end{split}
\]
If \(\gamma/\Lambda\) were smaller than a fixed sufficiently small number,
uniform ellipticity and the upper bound for \(|d\vartheta_q|_{h_q}\) would
give
\(
 |\operatorname{Re}\mathfrak p_{q,\gamma}|\ge c_0\Lambda^2
\).
Thus, after choosing \(\varepsilon\) sufficiently small,
\begin{equation}\label{eq:gamma-lambda-comparable}
        \gamma\ge c_1\Lambda,
        \qquad
        |\xi|_{h_q}\asymp\gamma
\end{equation}
whenever the hypothesis in \eqref{eq:near-characteristic-bracket} holds.
Decompose
\[
        \xi^\sharp=V+aN_q,
        \qquad
        d\vartheta_q(V)=0.
\]
The estimate for the imaginary part and
\eqref{eq:gamma-lambda-comparable} give
\[
        |a|\le C\varepsilon\gamma,
        \qquad
        |V|_{h_q}\ge c_2\gamma
\]
when \(\varepsilon\) is small.  By
\eqref{eq:graph-tangential-convexity}, the uniform Hessian bounds, and
\eqref{eq:graph-normal-hessian},
\begin{align*}
        &\nabla_{h_q}^2\vartheta_q(\xi^\sharp,\xi^\sharp)
        +\gamma^2\nabla_{h_q}^2\vartheta_q(N_q,N_q)                  \\
        &\quad\ge
        c_3|V|_{h_q}^2
        -C|a|\,|V|_{h_q}-Ca^2-Ce^{-S}\gamma^2                       \\
        &\quad\ge c_4\gamma^2,
\end{align*}
provided first \(\varepsilon\) is chosen small and then \(S\) is chosen
large.  Formula \eqref{eq:graph-poisson-bracket} now yields
\(
 \mathfrak b_q\ge c_5\gamma^3
\), which, together with
\eqref{eq:gamma-lambda-comparable}, proves
\eqref{eq:near-characteristic-bracket}.

We split phase space into the two regions determined by
\eqref{eq:near-characteristic-bracket}.  In the near-characteristic region,
\eqref{eq:hormander-rough-bounds} and
\eqref{eq:near-characteristic-bracket} show that the left-hand side of
H\"ormander's condition~(28.2.6) is controlled by \(\mathfrak b_q\).  In the
complementary region,
\[
        |\mathfrak p_{q,\gamma}|^2+\mathfrak b_q
        \ge\varepsilon^2\Lambda^4-C\Lambda^3
        \ge \frac{\varepsilon^2}{2}\Lambda^4
\]
for \(\gamma\ge\gamma_1\), after increasing the common threshold
\(\gamma_1\).  Together with \eqref{eq:hormander-rough-bounds}, this proves
that there is a constant \(C_H>0\), independent of \(q\), such that
\begin{equation}\label{eq:hormander-286-uniform}
\begin{split}
        &\sum_{|\alpha|<2}
        \gamma^{2(2-|\alpha|)-1}
        \left|
        \bigl(\xi+\sqrt{-1}\gamma d\vartheta_q(x)\bigr)^\alpha
        \right|^2                                                     \\
        &\qquad\le C_H\left(
        |\mathfrak p_{q,\gamma}(x,\xi)|^2+
        \mathfrak b_q(x,\xi,\gamma)
        \right),
        \qquad \gamma\ge\gamma_1.
\end{split}
\end{equation}
This is precisely condition~(28.2.6) of
\cite[Theorem~28.2.3, p.~237]{HormanderIV} for the quadratic symbol
\(p_q\).

Let \(P_q^{(2)}\) denote the differential operator formed from the
second-order part of \(P_q\), so that its differential symbol is \(p_q\).
H\"ormander's Theorem~28.2.3, applied with \(X=X_1\) and with a
fixed relatively compact open set containing \(\operatorname{supp}\widetilde v\),
gives
\begin{equation}\label{eq:hormander-fixed-chart}
        \gamma^3
        \|e^{\gamma\vartheta_q}\widetilde v\|_{L^2(\mathcal K)}^2
        +
        \gamma
        \|e^{\gamma\vartheta_q}D\widetilde v\|_{L^2(\mathcal K)}^2
        \le
        C\|e^{\gamma\vartheta_q}P_q^{(2)}\widetilde v\|_{L^2(\mathcal K)}^2.
\end{equation}
The constant is uniform in \(q\).  To see this directly from
H\"ormander's proof, choose once and for all a cutoff supported in \(X_1\)
and equal to one on a fixed neighborhood of \(\mathcal K\).  The common estimate
\eqref{eq:hormander-286-uniform} supplies the same positivity constant in
(28.2.6).  After homogeneous normalization
\((\xi,\gamma)=\Lambda(\widehat\xi,\widehat\gamma)\), the localized symbols
and the weights form a bounded family in every symbol seminorm occurring
in the proof of Theorem~28.2.3.  In the proof of Theorem~28.2.3, the Weyl-symbol remainder is controlled in
the symbol class appearing there, and H\"ormander then applies the
Fefferman--Phong inequality \cite[Theorem~18.6.8]{HormanderIII} to the normalized Weyl
symbol.  The uniform symbol seminorm bounds above therefore make the
constants in both steps independent of \(q\).  The absorption in the proof
of Theorem~28.2.3 consequently gives common constants \(C\) and
\(\gamma_0\) for the entire family
\((P_q^{(2)},\vartheta_q)\).
Since
\[
        P_q=P_q^{(2)}+L_q
\]
with \(L_q\) a first-order operator whose coefficients are uniformly bounded
on \(\mathcal K\),
\[
        \|e^{\gamma\vartheta_q}L_q\widetilde v\|_{L^2(\mathcal K)}^2
        \le C\left(
        \|e^{\gamma\vartheta_q}D\widetilde v\|_{L^2(\mathcal K)}^2
        +\|e^{\gamma\vartheta_q}\widetilde v\|_{L^2(\mathcal K)}^2
        \right).
\]
The last expression is bounded by \(C\gamma^{-1}\) times the left-hand side
of \eqref{eq:hormander-fixed-chart}.  Increasing the common lower threshold
for \(\gamma\), we absorb this term and obtain
\begin{equation}\label{eq:fixed-chart-carleman}
        \gamma^3
        \|e^{\gamma\vartheta_q}\widetilde v\|_{L^2(\mathcal K)}^2
        +
        \gamma
        \|e^{\gamma\vartheta_q}D\widetilde v\|_{L^2(\mathcal K)}^2
        \le
        C\|e^{\gamma\vartheta_q}P_q\widetilde v\|_{L^2(\mathcal K)}^2.
\end{equation}

The factor \(e^{-\gamma\tau(q)}\) relating
\(e^{\gamma\vartheta_q}\) to \(e^{\gamma\tau\circ\Phi_q}\) occurs on both
sides and cancels.  Uniform equivalence of Euclidean and Riemannian norms and
measures in the normalized charts then yields
\begin{equation}\label{eq:uniform-local-carleman}
        \gamma^3
        \|e^{\gamma\tau}v\|_{L^2}^2
        +
        \gamma
        \|e^{\gamma\tau}\nabla_{\widehat g}v\|_{L^2}^2
        \le
        C
        \|e^{\gamma\tau}Pv\|_{L^2}^2
\end{equation}
for every \(\gamma\ge\gamma_0\), with a constant independent of \(q\) and
\(v\).

We next construct a uniformly locally finite covering. Reduce
$r_{\mathrm{loc}}$ so that balls of radius $10r_{\mathrm{loc}}$
about points of $E_S$ lie in the normalized charts above. Uniform
ellipticity and the fixed coordinate derivative bounds give a common
radius for geodesic normal coordinates, by the geodesic equation and
the inverse function theorem at the origin. Reducing
$r_{\mathrm{loc}}$ once more, if necessary, we therefore have
\[
 c\rho^{2n}\le
 \operatorname{Vol}_{\widehat g}(B_{\widehat g}(q,\rho))
 \le C\rho^{2n},
 \qquad 0<\rho\le10r_{\mathrm{loc}},\quad q\in E_S.
\]
Smooth radial cutoffs in these normal coordinates have uniform
derivative bounds. Choose a maximal
\(r_{\mathrm{loc}}/4\)-separated set
\[
        \{q_\nu\}_{\nu\in\mathcal I}\subset E_S
\]
with respect to the ambient metric \(\widehat g\).  Maximality gives
\[
        E_S
        \subset
        \bigcup_{\nu\in\mathcal I}
        B_{\widehat g}(q_\nu,r_{\mathrm{loc}}/2).
\]
The balls $B_{\widehat g}(q_\nu,r_{\mathrm{loc}}/8)$ are
pairwise disjoint. The preceding volume bounds therefore imply that
there is an integer $N_{\mathrm{ov}}$, independent of $\nu$, such
that every point belongs to at most $N_{\mathrm{ov}}$ of the enlarged
ambient balls
\[
        B_{\widehat g}(q_\nu,2r_{\mathrm{loc}}).
\]

Choose real-valued functions
\[
        \widetilde\psi_\nu
        \in
        C_c^\infty\bigl(B_{\widehat g}(q_\nu,r_{\mathrm{loc}})\bigr),
        \qquad
        0\le\widetilde\psi_\nu\le1,
\]
such that
\[
        \widetilde\psi_\nu\equiv1
        \quad\text{on}\quad
        B_{\widehat g}(q_\nu,r_{\mathrm{loc}}/2),
\]
and whose derivatives of every fixed finite order are uniformly bounded in the
normalized ACH charts.  On \(E_S\), set
\[
\Psi
:=
\left(
\sum_{\mu\in\mathcal I}
\widetilde\psi_\mu^2
\right)^{1/2},
\qquad
\psi_\nu
:=
\frac{\widetilde\psi_\nu}{\Psi}.
\]
Because the smaller balls cover \(E_S\), \(\Psi\ge1\).  Uniform finite
overlap and the uniform bounds for the \(\widetilde\psi_\nu\)'s give the
corresponding uniform bounds for \(\Psi\), \(\Psi^{-1}\), and the
\(\psi_\nu\)'s.  Moreover,
\begin{equation}\label{eq:quadratic-partition}
        \sum_{\nu\in\mathcal I}\psi_\nu^2=1.
\end{equation}
Thus \(\{\psi_\nu\}\) is a uniformly locally finite
\emph{quadratic partition of unity} subordinate to the ambient ACH covering.

Let \(w\in C_c^\infty(E_S)\).  Only finitely many \(\psi_\nu\)'s meet
\(\operatorname{supp}w\).  Apply
\eqref{eq:uniform-local-carleman} to \(\psi_\nu w\) and sum over \(\nu\).
By \eqref{eq:quadratic-partition},
\[
        \sum_\nu\|\psi_\nu w\|_{L^2}^2
        =
        \|w\|_{L^2}^2.
\]
Furthermore,
\[
        \sum_\nu\psi_\nu\,d\psi_\nu
        =
        \frac12
        d\left(\sum_\nu\psi_\nu^2\right)
        =
        0.
\]
Consequently, pointwise on \(E_S\),
\begin{align}
        \sum_\nu
        |\nabla_{\widehat g}(\psi_\nu w)|_{\widehat g}^2
        &=
        \sum_\nu
        \left|
        \psi_\nu\nabla_{\widehat g}w
        +
        w\nabla_{\widehat g}\psi_\nu
        \right|_{\widehat g}^2
        \notag\\
        &=
        |\nabla_{\widehat g}w|_{\widehat g}^2
        +
        |w|^2
        \sum_\nu
        |\nabla_{\widehat g}\psi_\nu|_{\widehat g}^2.
\label{eq:quadratic-partition-gradient-identity}
\end{align}
In particular,
\[
        \sum_\nu
        \|e^{\gamma\tau}
        \nabla_{\widehat g}(\psi_\nu w)\|_{L^2}^2
        \ge
        \|e^{\gamma\tau}
        \nabla_{\widehat g}w\|_{L^2}^2.
\]

On the right-hand side,
\[
        P(\psi_\nu w)
        =
        \psi_\nu Pw+[P,\psi_\nu]w.
\]
Since \(P=-\Delta_{\widehat g}\),
\[
        [P,\psi_\nu]w
        =
        -2\langle d\psi_\nu,dw\rangle_{\widehat g}
        -
        (\Delta_{\widehat g}\psi_\nu)w.
\]
Uniform derivative bounds and uniform finite overlap imply
\begin{equation}\label{eq:partition-commutator-sum}
        \sum_\nu
        \|e^{\gamma\tau}[P,\psi_\nu]w\|_{L^2}^2
        \le
        C
        \|e^{\gamma\tau}\nabla_{\widehat g}w\|_{L^2}^2
        +
        C
        \|e^{\gamma\tau}w\|_{L^2}^2.
\end{equation}
Also,
\[
        \sum_\nu
        \|e^{\gamma\tau}\psi_\nu Pw\|_{L^2}^2
        =
        \|e^{\gamma\tau}Pw\|_{L^2}^2.
\]
Therefore, after summing the local estimates,
\[
\begin{aligned}
        \gamma^3
        \|e^{\gamma\tau}w\|_{L^2}^2
        +
        \gamma
        \|e^{\gamma\tau}\nabla_{\widehat g}w\|_{L^2}^2
        \le{}&
        C
        \|e^{\gamma\tau}Pw\|_{L^2}^2                       \\
        &+
        C
        \|e^{\gamma\tau}\nabla_{\widehat g}w\|_{L^2}^2
        +
        C
        \|e^{\gamma\tau}w\|_{L^2}^2.
\end{aligned}
\]
After increasing \(\gamma_0\), the final two terms are absorbed into the
left-hand side.  This proves \eqref{eq:graph-carleman}.
\end{proof}

\begin{lemma}
\label{lem:graph-carleman-extension}
Let \(w\in C^\infty(E_S)\) have support contained in
\[
        \{\tau_-\le\tau\le\tau_+\}
\]
for some
\[
        0<\tau_-<\tau_+<\infty.
\]
Suppose, for a fixed \(\gamma\ge\gamma_0\), that
\[
        e^{\gamma\tau}w,\qquad
        e^{\gamma\tau}\nabla_{\widehat g}w,\qquad
        e^{\gamma\tau}Pw
        \in L^2(E_S,dV_{\widehat g}).
\]
Then \eqref{eq:graph-carleman} holds for \(w\).
\end{lemma}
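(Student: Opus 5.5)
The plan is a density argument: approximate $w$ by compactly supported functions in $E_S$ and pass to the limit in \eqref{eq:graph-carleman}. On the slab $\{\tau_-\le\tau\le\tau_+\}$ the weight satisfies $\gamma\tau_-\le\gamma\tau\le\gamma\tau_+$. So $e^{\gamma\tau}$ is bounded above and below there, and the hypotheses are equivalent to $w,\nabla_{\widehat g}w,Pw\in L^2(E_S,dV_{\widehat g})$. The support condition keeps $w$ away from the inner boundary of $E_S$, since $\tau\ge\tau_->0$. It also keeps $w$ away from the far end, since $\tau\le\tau_+$ bounds $s-S-\varphi(y)$. The only noncompactness left is the tangential end, where $y\to\partial Y$ along the slab. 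Because the level sets of $\tau$ are complete (Lemma~\ref{lem:convex-graph-end}(iii)) and $|d\tau|_{\widehat g}$ is comparable to one, this end should be at infinite $\widehat g$-distance. In other words, the slab should be complete in the tangential directions.

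Concretely, I fix a point $q_*\in E_S$ and use cutoffs $\chi_k=\chi(d_k/k)$, where $\chi\in C_c^\infty([0,2))$ equals $1$ on $[0,1]$. Here $d_k$ is a smoothed version of $\operatorname{dist}_{\widehat g}(q_*,\cdot)$ with uniformly bounded gradient; mollifying at the scale $r_{\mathrm{loc}}$ in the normalized ACH charts should give bounded first and second covariant derivatives. I then apply \eqref{eq:graph-carleman} to $\chi_kw$. This is legitimate once I check two things. First, $\operatorname{supp}(\chi_kw)$ is compact in $E_S$: it is closed, bounded in the complete ambient geometry restricted to the slab, and separated from $\{\tau=0\}$ and from $\partial Y$. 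Second, $\chi_kw$ is smooth. For the compactness claim I use the metric comparison of Lemma~\ref{lem:local-ach-product}: on the slab, $s$ lies within bounded distance of $S+\varphi(y)$, and the distance to the tangential end is at least comparable to the induced distance on $\mathcal N_S$, which is infinite.

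Next I expand $P(\chi_kw)=\chi_kPw-2\langle d\chi_k,dw\rangle_{\widehat g}-(\Delta_{\widehat g}\chi_k)w$. With $|d\chi_k|\le C/k$ and $|\Delta\chi_k|\le C/k$ (for $k\ge1$), the commutator terms, weighted by $e^{\gamma\tau}$, are bounded by $Ck^{-1}(\|e^{\gamma\tau}\nabla w\|+\|e^{\gamma\tau}w\|)$ and tend to zero. By dominated convergence, $e^{\gamma\tau}\chi_kw\to e^{\gamma\tau}w$ and $e^{\gamma\tau}\chi_kPw\to e^{\gamma\tau}Pw$ in $L^2$. Also $e^{\gamma\tau}\nabla(\chi_kw)\to e^{\gamma\tau}\nabla w$, since the error $w\nabla\chi_k$ is $O(1/k)$ in weighted $L^2$. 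Letting $k\to\infty$ in \eqref{eq:graph-carleman} for $\chi_kw$ gives the estimate for $w$.

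The main obstacle I expect is the geometric step: showing that the slab $\{\tau_-\le\tau\le\tau_+\}$ is metrically complete toward $\partial Y$, and constructing $d_k$ with uniform Hessian bounds. I would first try to use $\varphi=-\log\delta$ directly as an exhaustion instead of the distance function, taking $\chi_k=\chi(\varphi/k)$. By \eqref{eq:small-phi-derivatives}, $|d\varphi|_{\widehat g}+|\nabla^2\varphi|_{\widehat g}\le Ce^{-S}$ on $E_S$, so $\chi_k$ automatically has bounded derivatives, decaying like $1/k$. Moreover $\{\varphi\le 2k\}\cap\{\tau_-\le\tau\le\tau_+\}$ is compact in $E_S$, because it keeps $\delta\ge e^{-2k}$ and $s$ bounded. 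This choice avoids distance functions entirely and seems the cleanest route.
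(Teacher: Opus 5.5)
Your final route is correct and is essentially the paper's argument. You use the tangential cutoff $\chi_k=\chi(\varphi/k)=\chi(-\log\delta/k)$, controlled by \eqref{eq:small-phi-derivatives} (that is, by $e^{-s}\le e^{-S}\delta$ on $E_S$), together with compactness of $\{\varphi\le 2k\}\cap\{\tau_-\le\tau\le\tau_+\}$ in $E_S$; the paper cuts off with $\zeta_j=\chi(\delta/\varepsilon_j)$ and relies on the same scaling estimate. The difference is minor: your logarithmic cutoff has $O(1/k)$ ACH derivatives, so the commutator terms vanish directly from the $L^2$ hypotheses, whereas the paper's cutoff has only bounded ACH derivatives and uses that the shells $\{\varepsilon_j\le\delta\le2\varepsilon_j\}$ escape every compact set; the distance-function detour is not needed.
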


\begin{proof}
Choose \(\chi\in C^\infty(\mathbb R)\) satisfying
\[
        0\le\chi\le1,
        \qquad
        \chi(t)=0\text{ for }t\le1,
        \qquad
        \chi(t)=1\text{ for }t\ge2.
\]
Let \(\varepsilon_j\downarrow0\) and put
\[
        \zeta_j(y)
        =
        \chi\left(\frac{\delta(y)}{\varepsilon_j}\right).
\]
Then
\[
        \zeta_jw\in C_c^\infty(E_S).
\]
Indeed, on the support of \(\zeta_jw\) one has
\[
        \delta\ge\varepsilon_j,
        \qquad
        \tau_-\le\tau\le\tau_+,
\]
and hence
\[
        S-\log\delta+\tau_-
        \le s
        \le
        S-\log\varepsilon_j+\tau_+.
\]

On the support of \(d\zeta_j\), we have
\[
        \varepsilon_j\le\delta\le2\varepsilon_j.
\]
The ordinary first and second derivatives of \(\zeta_j\) are respectively
\(O(\varepsilon_j^{-1})\) and \(O(\varepsilon_j^{-2})\).  On \(E_S\),
however,
\[
        e^{-s}\le e^{-S}\delta.
\]
Consequently the ACH-scaled first and second derivatives are uniformly
bounded.  In particular,
\begin{equation}\label{eq:zeta-uniform-bounds}
        |d\zeta_j|_{\widehat g}
        +
        |\Delta_{\widehat g}\zeta_j|
        \le C
\end{equation}
with \(C\) independent of \(j\).

Applying Proposition~\ref{prop:theta-carleman} to \(\zeta_jw\) gives
\[
\begin{aligned}
        \gamma^3\|e^{\gamma\tau}\zeta_jw\|^2
        +
        \gamma
        \|e^{\gamma\tau}
        \nabla_{\widehat g}(\zeta_jw)\|^2
        \le
        C\|e^{\gamma\tau}P(\zeta_jw)\|^2.
\end{aligned}
\]
Since \(P=-\Delta_{\widehat g}\),
\[
        P(\zeta_jw)
        =
        \zeta_jPw
        -
        2\langle d\zeta_j,dw\rangle_{\widehat g}
        -
        (\Delta_{\widehat g}\zeta_j)w.
\]
By \eqref{eq:zeta-uniform-bounds},
\[
        |[P,\zeta_j]w|
        \le
        C\mathbf 1_{\{\varepsilon_j\le\delta\le2\varepsilon_j\}}
        \left(
        |\nabla_{\widehat g}w|+|w|
        \right).
\]
The sets
\[
        \{\varepsilon_j\le\delta\le2\varepsilon_j\}
        \cap\{\tau_-\le\tau\le\tau_+\}
\]
leave every compact subset of \(E_S\): indeed, \(\delta\) has a positive
minimum on each compact subset of \(E_S\).  The assumed weighted
\(L^2\)-integrability and absolute continuity of the integral therefore give
\[
        e^{\gamma\tau}[P,\zeta_j]w\longrightarrow0
        \qquad\text{in }L^2.
\]
Likewise,
\[
\begin{split}
        e^{\gamma\tau}\zeta_jw
        &\longrightarrow e^{\gamma\tau}w,\\
        e^{\gamma\tau}\nabla_{\widehat g}(\zeta_jw)
        &\longrightarrow
        e^{\gamma\tau}\nabla_{\widehat g}w,\\
        e^{\gamma\tau}\zeta_jPw
        &\longrightarrow e^{\gamma\tau}Pw
\end{split}
\]
in the corresponding \(L^2\)-spaces.  Letting \(j\to\infty\) proves the
lemma.
\end{proof}

\begin{lemma}
\label{lem:flatness-weighted-integrability}
Let \(u\in\dot C^\infty(X_\Theta)\) over a neighborhood of
\(\overline Y\).  Then, for every \(M>0\) and \(j=0,1,2\),
\begin{equation}\label{eq:flatness-s-estimate}
        |\nabla_{\widehat g}^ju|
        \le C_{M,j}e^{-Ms}
\end{equation}
on \(E_S\).  Consequently, for every \(\gamma>0\) and every compact interval
\(I\Subset(0,\infty)\),
\begin{equation}\label{eq:weighted-strip-integrability}
        \mathbf 1_{\{\tau\in I\}}
        e^{\gamma\tau}\nabla_{\widehat g}^ju
        \in L^2(E_S,dV_{\widehat g}),
        \qquad j=0,1,2.
\end{equation}
Furthermore, if \(M>n+\gamma\), then for every \(R\ge0\),
\begin{equation}\label{eq:weighted-tail-decay}
        \left\|
        \mathbf 1_{\{\tau\ge R\}}
        e^{\gamma\tau}\nabla_{\widehat g}^ju
        \right\|_{L^2}
        \le C_{M,\gamma,j}e^{-(M-n-\gamma)R},
        \qquad j=0,1,2.
\end{equation}
\end{lemma}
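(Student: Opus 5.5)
The argument has three steps: convert flatness of \(u\) into pointwise bounds on \(\widehat g\)-covariant derivatives, relate the size of \(e^{-s}\) to \(\tau\) on \(E_S\), and integrate against the volume bound \eqref{eq:ach-volume-comparison}.

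\emph{Pointwise bounds.} Since \(u\in\dot C^\infty(X_\Theta)\), every ordinary derivative of \(u\) in the smooth structure of \(X_\Theta\) is \(O(x^N)\) for every \(N\), uniformly over a neighborhood of \(\overline Y\). By \eqref{eq:reference-defining-function-comparison}, \(x\) and \(\widetilde x=e^{-s}\) are comparable, so these bounds are also \(O(e^{-Ns})\). Next we express \(\nabla^j_{\widehat g}u\), \(j\le2\), in the normalized ACH frame \(\partial_s, e^{-s}Y_k, e^{-2s}T\). The frame fields are smooth vector fields on \(X_\Theta\) multiplied by bounded factors; note \(\partial_s=-\widetilde x\partial_{\widetilde x}\) is smooth. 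Lemma~\ref{lem:local-ach-product} gives bounded coefficients of \(\widehat g\) and \(\widehat g^{-1}\), together with their frame derivatives, so the Christoffel symbols in this frame are bounded. Consequently each component of \(\nabla^j_{\widehat g}u\) is a bounded combination of at most \(j\) ordinary derivatives of \(u\), and its pointwise \(\widehat g\)-norm is bounded by \(C_{N,j}e^{-Ns}\). This proves \eqref{eq:flatness-s-estimate} with \(M=N\). The estimate holds on the whole collar over a neighborhood of \(\overline Y\), hence on \(E_S\).

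\emph{Relating \(s\) to \(\tau\).} On \(E_S\) we have \(s=\tau+S+\varphi(y)\) with \(\varphi=-\log\delta>0\). Hence \(e^{-Ms}=e^{-M(\tau+S)}\delta(y)^M\le e^{-M(\tau+S)}\), and we keep the factor \(\delta^M\) for use in the volume integral.

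\emph{Integration.} By \eqref{eq:ach-volume-comparison},
\[
\int_{E_S\cap\{\tau\ge R\}}e^{2\gamma\tau}|\nabla^j_{\widehat g}u|^2\,dV_{\widehat g}
\le C\int_Y\int_{S+\varphi(y)+R}^{\infty}e^{2\gamma(s-S-\varphi)}e^{-2Ms}e^{2ns}\,ds\,d\mu_0(y).
\]
When \(M>n+\gamma\), the inner integral equals a constant times \(e^{-2\gamma(S+\varphi)}e^{-2(M-n-\gamma)(S+\varphi+R)}\cdot e^{2\gamma(S+\varphi+R)}\). This simplifies to \(C\,e^{-2(M-n)(S+\varphi)}e^{2\gamma R}e^{-2(M-n)R}\). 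Rearranging, it is bounded by \(e^{-2(M-n-\gamma)R}\,\delta(y)^{2(M-n)}e^{-2(M-n)S}\). The factor \(\delta^{2(M-n)}\le1\) and \(Y\) has finite \(\mu_0\)-measure, so the \(y\)-integral converges. Taking square roots gives \eqref{eq:weighted-tail-decay}.

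For \eqref{eq:weighted-strip-integrability} with a compact interval \(I\subset[a,b]\), we choose \(M\) large enough that \(M>n+\gamma\). The weight then satisfies \(e^{\gamma\tau}\le e^{\gamma b}\), and the case \(R=a\) of the tail estimate bounds the strip integral.

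\emph{Main obstacle.} The only delicate point is the first step. Flatness is stated for ordinary derivatives on \(X_\Theta\), but the claim concerns \(\widehat g\)-covariant derivatives in the geodesic coordinates \((s,y)\) built from \(\widetilde x\). We must check that the normalized-frame Christoffel symbols are bounded and that the change from \((x,y)\) to \((s,y)\) coordinates loses at most polynomially many powers of \(e^{s}\). Both facts follow from the uniform bounds in Lemma~\ref{lem:local-ach-product}, and any such polynomial loss is absorbed by choosing \(N\) larger.
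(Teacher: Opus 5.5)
Your proposal is correct and follows essentially the same route as the paper. You first get $|\nabla^j_{\widehat g}u|\le C_{M,j}e^{-Ms}$ by writing covariant derivatives in the normalized ACH frame as bounded combinations of $\Theta$-derivatives of a flat function, and then you substitute $s=S-\log\delta+\tau$ and integrate against \eqref{eq:ach-volume-comparison} to obtain the strip and tail bounds. (Minor points: your first expression for the inner $s$-integral carries a spurious factor $e^{2\gamma(S+\varphi+R)}$, although your next line and the final bound $Ce^{-2(M-n-\gamma)R}\delta^{2(M-n)}e^{-2(M-n)S}$ are correct; also, the paper proves the strip integrability directly, needing only $M>n$, rather than deriving it from the tail estimate.)
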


\begin{proof}
Because \(\widetilde x/x\) extends positively to the compactified boundary and
both \(\widetilde x/x\) and \(x/\widetilde x\) are bounded, infinite-order
vanishing in \(x\) is equivalent to infinite-order vanishing in
\(\widetilde x=e^{-s}\).  The bounded derivative comparison between the
corresponding normalized \(\Theta\)-frames, through order two, gives the same
equivalence for the derivatives used below. Covariant derivatives
in the \(\widehat g\)-bounded frame are combinations, with uniformly bounded
coefficients, of the \(\Theta\)-derivatives
\[
        \widetilde x\partial_{\widetilde x},
        \qquad
        \widetilde xY_j,
        \qquad
        \widetilde x^2T.
\]
Thus \eqref{eq:flatness-s-estimate} follows from
\(u\in\dot C^\infty(X_\Theta)\).

Write
\[
        s=S-\log\delta(y)+\tau
\]
on \(E_S\).  By \eqref{eq:ach-volume-comparison}, for
\(I=[A,B]\Subset(0,\infty)\),
\[
\begin{aligned}
 &\int_{\{\tau\in I\}}
 e^{2\gamma\tau}
 |\nabla_{\widehat g}^ju|^2\,dV_{\widehat g}               \\
 &\qquad\le
 C\int_Y\int_A^B
 e^{2\gamma\tau}
 e^{-2(M-n)(S-\log\delta+\tau)}
 \,d\tau\,d\mu_0(y)                                        \\
 &\qquad\le
 C_{A,B,M,\gamma}
 \int_Y\delta(y)^{2(M-n)}\,d\mu_0(y).
\end{aligned}
\]
The last integral is finite once \(M>n\), proving
\eqref{eq:weighted-strip-integrability}.

Integrating the same estimate over \(\tau\ge R\) gives, when
\(M>n+\gamma\),
\[
\begin{aligned}
 &\int_{\{\tau\ge R\}}
 e^{2\gamma\tau}
 |\nabla_{\widehat g}^ju|^2\,dV_{\widehat g}                 \\
 &\qquad\le
 C\int_Y\delta(y)^{2(M-n)}\,d\mu_0(y)
 \int_R^\infty e^{-2(M-n-\gamma)\tau}\,d\tau                \\
 &\qquad\le
 C_{M,\gamma,j}e^{-2(M-n-\gamma)R}.
\end{aligned}
\]
Taking square roots proves \eqref{eq:weighted-tail-decay}.
\end{proof}

With the conventions above,
\(\Delta_{g_B^{\mathrm R}}=2\Delta_{\gB}\) on scalar functions.
Since \(\widehat g=c_Bg_B^{\mathrm R}\), it follows that
\begin{equation}\label{eq:real-complex-laplacian}
        \Delta_{\widehat g}
        =c_{\mathrm R}\Delta_{\gB},
        \qquad c_{\mathrm R}:=\frac2{c_B}=4(n+1).
\end{equation}
Consequently,
\[
        P=-\Delta_{\widehat g}
        =
        -c_{\mathrm R}\Delta_{\gB},
\]
and hence
\[
        \Delta_{\gB}u=0
        \quad\Longrightarrow\quad
        Pu=0.
\]

\begin{proposition}[Local unique continuation from an ACH boundary patch]
\label{prop:uci}
Let \(\Gamma_1\Subset M_0\), and let \(u\) be smooth on a one-sided
collar of \(\Gamma_1\).  Assume that
\[
        \Delta_{\gB}u=0
\]
on this collar and that
\[
        u\in\dot C^\infty(X_\Theta)
\]
over \(\Gamma_1\).  Then there are a nonempty open set
\(Y_0\Subset\Gamma_1\) and \(S_2>0\) such that
\[
        u=0
        \qquad\text{on}\qquad
        \{s>S_2\}\times Y_0.
\]
\end{proposition}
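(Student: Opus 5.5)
We will combine the Carleman estimate of Proposition~\ref{prop:theta-carleman}, in the form of Lemma~\ref{lem:graph-carleman-extension}, with the weighted decay of Lemma~\ref{lem:flatness-weighted-integrability}, using a cutoff in the weight $\tau$ only. First apply Lemma~\ref{lem:local-ach-product} with the given $\Gamma_1$ to obtain $\Gamma_0$, $\widetilde x$ and $s=-\log\widetilde x$. Then choose $Y\Subset\Gamma_0$ and $\delta$ as in Lemma~\ref{lem:convex-graph-end}, and fix $S$ large enough that Proposition~\ref{prop:theta-carleman} holds on $E_S$. By \eqref{eq:real-complex-laplacian} we have $Pu=0$ on $E_S$. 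We will take $Y_0:=\{\delta>1/2\}\cap Y$ (nonempty after normalizing $\delta$ suitably) and show $u=0$ on $\{\tau>T_0\}$ for some fixed $T_0>0$. On $Y_0$ one has $\varphi<\log 2$, so $\{s>S+\log2+T_0\}\times Y_0\subset\{\tau>T_0\}$, which gives the statement with $S_2=S+\log 2+T_0$.

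Fix $0<\tau_1<\tau_2$ and $R>\tau_2+1$. Take $\chi_R(t)$ equal to $0$ for $t\le\tau_1$, equal to $1$ on $[\tau_2,R]$, and equal to $0$ for $t\ge R+1$. Set $w=\chi_R(\tau)u$. Its support lies in $\{\tau_1\le\tau\le R+1\}$. By \eqref{eq:weighted-strip-integrability}, $e^{\gamma\tau}w$, $e^{\gamma\tau}\nabla w$ and $e^{\gamma\tau}Pw$ all lie in $L^2$, so Lemma~\ref{lem:graph-carleman-extension} applies. Since $Pu=0$, the commutator gives
\[
        Pw=-2\chi_R'(\tau)\langle d\tau,du\rangle_{\widehat g}-\bigl(\chi_R''(\tau)|d\tau|^2_{\widehat g}+\chi_R'(\tau)\Delta_{\widehat g}\tau\bigr)u .
\]
This is supported in $\{\tau_1\le\tau\le\tau_2\}\cup\{R\le\tau\le R+1\}$, with coefficients bounded by \eqref{eq:graph-weight-gradient} and the uniform Hessian bounds on $\tau$ from Lemma~\ref{lem:convex-graph-end}(iv). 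Fix $\gamma\ge\gamma_0$ and $M>n+\gamma$. By \eqref{eq:weighted-tail-decay}, the contribution from the upper strip is $O(e^{-(M-n-\gamma)R})\to0$ as $R\to\infty$. The lower strip contributes at most $C e^{2\gamma\tau_2}\|\mathbf 1_{\{\tau_1\le\tau\le\tau_2\}}(|u|+|\nabla u|)\|^2=:Ce^{2\gamma\tau_2}A$, where $A$ is finite and independent of $\gamma$.

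Letting $R\to\infty$, for any $T>\tau_2$ we get
\[
        \gamma^3e^{2\gamma T}\|\mathbf 1_{\{\tau\ge T\}}u\|^2\le Ce^{2\gamma\tau_2}A .
\]
Here we use monotone convergence on the left, and the finiteness of $\|\mathbf 1_{\{\tau\ge T\}}u\|$ comes from Lemma~\ref{lem:flatness-weighted-integrability}. Letting $\gamma\to\infty$ gives $u=0$ on $\{\tau>T\}$, since $T>\tau_2$. Taking $T_0=T$ concludes.

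The main subtlety is making sure $\gamma$ is fixed before letting $R\to\infty$. The tail estimate needs $M>n+\gamma$, and this is available because $u$ is flat to every order, so $M$ can be chosen after $\gamma$. Only after the tail has been removed do we send $\gamma\to\infty$. A second point to check is that the constant in Lemma~\ref{lem:graph-carleman-extension} is independent of $R$ and $\gamma$; this holds because it is the constant of Proposition~\ref{prop:theta-carleman}. Finally, we must check that the cutoff derivatives of $\chi_R(\tau)$ are uniformly bounded in the $\widehat g$ geometry, which follows from \eqref{eq:graph-weight-gradient} and Lemma~\ref{lem:convex-graph-end}(iv).
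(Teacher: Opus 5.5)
Your proposal is correct and follows essentially the same route as the paper. Both arguments apply the Carleman estimate on $E_S$ through Lemma~\ref{lem:graph-carleman-extension}, remove the upper cutoff commutator with the flatness tail estimate while $\gamma$ is held fixed (choosing $M>n+\gamma$), and then send $\gamma\to\infty$ against the fixed lower strip.

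The remaining differences are cosmetic: you use a single cutoff instead of the product $\chi_-\chi_R$, monotone convergence on the left instead of weighted $H^1$ convergence, a direct norm bound instead of the contradiction argument, and the explicit choice $Y_0=\{\delta>1/2\}$. One small point you should state, as the paper does, is that $S$ is also increased so that $E_S$ lies inside the collar where $\Delta_{\gB}u=0$ holds.
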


\begin{proof}
Choose
\[
        \overline Y\Subset \Gamma_0 \Subset\Gamma_1
\]
as above and take \(S\) sufficiently large that
Lemmas~\ref{lem:convex-graph-end} and~\ref{lem:flatness-weighted-integrability} apply.  Increase \(S\), if
necessary, so that \(E_S\) lies in the collar on which
\(\Delta_{\gB}u=0\).  Set
\[
        P=-\Delta_{\widehat g}.
\]
By \eqref{eq:real-complex-laplacian},
\[
        Pu=0
        \qquad\text{on }E_S.
\]

Fix \(\tau_1>2\).  Choose
\(\chi_-\in C^\infty(\mathbb R)\) satisfying
\[
        \chi_-(t)=0\text{ for }t\le\tau_1,
        \qquad
        \chi_-(t)=1\text{ for }t\ge\tau_1+1.
\]
Choose once and for all a function
\(\chi_+\in C^\infty(\mathbb R)\) such that
\[
        0\le\chi_+\le1,
        \qquad
        \chi_+(t)=1\text{ for }t\le0,
        \qquad
        \chi_+(t)=0\text{ for }t\ge1.
\]
For \(R>\tau_1+3\), set
\[
        \chi_R(t):=\chi_+(t-R).
\]
Then
\[
        \chi_R(t)=1\text{ for }t\le R,
        \qquad
        \chi_R(t)=0\text{ for }t\ge R+1,
\]
and, for every fixed \(k\),
\[
        \sup_R\|\chi_R^{(k)}\|_{L^\infty}<\infty.
\]
Put
\[
        v_R=\chi_-(\tau)\chi_R(\tau)u.
\]
The function \(v_R\) has compact support in the \(\tau\)-variable.  By
Lemma~\ref{lem:flatness-weighted-integrability}, for every fixed
\(\gamma\ge\gamma_0\),
\[
        e^{\gamma\tau}v_R,\qquad
        e^{\gamma\tau}\nabla_{\widehat g}v_R,\qquad
        e^{\gamma\tau}Pv_R
        \in L^2(E_S,dV_{\widehat g}).
\]
Lemma~\ref{lem:graph-carleman-extension} therefore gives
\begin{equation}\label{eq:carleman-vR}
\begin{split}
        \gamma^3\|e^{\gamma\tau}v_R\|^2
        +
        \gamma
        \|e^{\gamma\tau}\nabla_{\widehat g}v_R\|^2
        \le
        C\|e^{\gamma\tau}Pv_R\|^2.
\end{split}
\end{equation}

Set
\[
        \alpha=\chi_-(\tau),
        \qquad
        \beta_R=\chi_R(\tau).
\]
Because \(R>\tau_1+3\), we have
\[
\beta_R\equiv1
\quad\text{on }\operatorname{supp}d\alpha,
\qquad
\alpha\equiv1
\quad\text{on }\operatorname{supp}d\beta_R,
\]
and
\[
\langle d\alpha,d\beta_R\rangle_{\widehat g}=0.
\]
For \(P=-\Delta_{\widehat g}\), the product rule gives
\[
P(\alpha\beta_Ru)
=
\alpha\beta_RPu
+
\beta_R[P,\alpha]u
+
\alpha[P,\beta_R]u
-
2\langle d\alpha,d\beta_R\rangle_{\widehat g}u.
\]
Since \(Pu=0\), the preceding support relations imply
\[
        Pv_R
        =
        [P,\chi_-(\tau)]u
        +
        [P,\chi_R(\tau)]u.
\]
The second term is supported where
\[
        R\le\tau\le R+1.
\]
The uniform derivative bounds for \(\chi_R\), together with the uniform
bounds for \(d\tau\) and \(\nabla^2\tau\), give
\[
        |[P,\chi_R(\tau)]u|
        \le
        C\mathbf 1_{\{R\le\tau\le R+1\}}
        \bigl(|\nabla_{\widehat g}u|+|u|\bigr),
\]
with \(C\) independent of \(R\).  Hence
Lemma~\ref{lem:flatness-weighted-integrability}, in particular the weighted
tail estimate \eqref{eq:weighted-tail-decay}, gives for every fixed
\(\gamma\)
\[
        \|e^{\gamma\tau}[P,\chi_R(\tau)]u\|_{L^2}
        \longrightarrow0
        \qquad
        \text{as }R\to\infty.
\]
The full-tail estimate \eqref{eq:weighted-tail-decay} also gives
\[
        v_R\longrightarrow\chi_-(\tau)u
\]
in the weighted \(H^1\)-norm for each fixed \(\gamma\).  Letting
\(R\to\infty\) in \eqref{eq:carleman-vR}, we obtain
\begin{equation}\label{eq:carleman-fixed-strip}
        \gamma^{3/2}
        \|e^{\gamma\tau}\chi_-(\tau)u\|_{L^2}
        \le
        C
        \|e^{\gamma\tau}[P,\chi_-(\tau)]u\|_{L^2}.
\end{equation}
The commutator on the right is supported in the fixed strip
\[
        \tau_1\le\tau\le\tau_1+1.
\]
Its unweighted \(L^2\)-norm is finite by
Lemma~\ref{lem:flatness-weighted-integrability}.  Thus there is a constant
\(C_{\tau_1}>0\), independent of \(\gamma\), such that
\begin{equation}\label{eq:fixed-strip-upper-bound}
        \gamma^{3/2}
        \|e^{\gamma\tau}\chi_-(\tau)u\|_{L^2}
        \le
        C_{\tau_1}e^{\gamma(\tau_1+1)}.
\end{equation}

Suppose that \(u(q)\neq0\) for some \(q\in E_S\) with
\[
        \tau(q)>\tau_1+2.
\]
By continuity, there are an open set \(V\Subset E_S\), a number
\(a>\tau_1+2\), and a constant \(c_V>0\) such that
\[
        \tau\ge a,
        \qquad
        |u|\ge c_V
        \qquad\text{on }V.
\]
Since \(\chi_-=1\) on \(V\),
\[
        \|e^{\gamma\tau}\chi_-u\|_{L^2}
        \ge
        c_V\operatorname{Vol}_{\widehat g}(V)^{1/2}
        e^{\gamma a}.
\]
This contradicts \eqref{eq:fixed-strip-upper-bound} as
\(\gamma\to\infty\), because
\[
        a>\tau_1+1.
\]
Therefore
\[
        u=0
        \qquad\text{on}\qquad
        \{\tau>\tau_1+2\}\cap E_S.
\]

Finally choose any nonempty open set \(Y_0\Subset Y\).  Since
\(\varphi\) is bounded on \(\overline Y_0\), the number
\[
        S_2
        :=
        S+\max_{\overline Y_0}\varphi+\tau_1+2
\]
is finite, and
\[
        \{s>S_2\}\times Y_0
        \subset
        \{\tau>\tau_1+2\}.
\]
Hence
\[
        u=0
        \qquad\text{on}\qquad
        \{s>S_2\}\times Y_0,
\]
as required.
\end{proof}

\begin{proof}[Proof of Theorem~\ref{thm:csck-ke}]
Set $u=\log(\cB/\Cn)$. By Lemma~\ref{lem:scalar-pde} and
Proposition~\ref{prop:infinite-vanishing}, $u$ is Bergman-harmonic and
flat at the strongly pseudoconvex patch. Proposition~\ref{prop:uci}
therefore gives $u=0$ on a nonempty one-sided collar. Since $u$ is
real analytic on the connected domain $\om$, it vanishes identically.
Thus $\cB=\Cn$, and
\[
        \Ric(\gB)=-\gB-\ddbar\log\cB=-\gB.
\]
\end{proof}

\section{Extension for possibly unbounded domains}
\label{sec:unbounded-extension}

We now remove the boundedness assumption by allowing the
Bergman kernel to have a zero set in the interior.  We shall use the
following local nondegeneracy result, proved in
\cite{EGSXscalarcurv2026}.

\begin{proposition}
\label{prop:unbounded-finite-type-nondegeneracy}
Let $\om\subset\C^n$ be a possibly unbounded pseudoconvex domain, and let
$p\in\partial\om$ be a smooth boundary point of finite D'Angelo type.
Then there exists a neighborhood $V$ of $p$ such that
\[
        K_\om(z,z)>0,
        \qquad
        \ddbar\log K_\om(z,z)>0
        \qquad\text{on }V\cap\om.
\]
In particular, the Bergman metric is well-defined on $V\cap\om$.
\end{proposition}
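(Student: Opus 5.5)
The plan is to localize the problem to a model domain with good Bergman kernel behavior and then use a positive lower bound for the kernel there. Since $\om$ may be unbounded, I first arrange a local setting in which the Bergman kernel of $\om$ is comparable to that of a bounded pseudoconvex domain near $p$. Near a smooth finite-type point of a pseudoconvex boundary there is a local holomorphic peak or bumping construction. Concretely, I would take a small ball $B$ around $p$ so that $D:=\om\cap B$ is a bounded pseudoconvex domain, and $p$ is a smooth finite-type boundary point of $D$. Then I would invoke a localization principle for the Bergman kernel and metric. The plan is to use Ohsawa--Takegoshi/H\"ormander $L^2$ estimates for $\bar\partial$ on the pseudoconvex domain $\om$, with plurisubharmonic weights built from a local plurisubharmonic peak function at $p$. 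Such a peak function exists at finite-type points by Catlin/Bedford--Fornaess in $\C^2$, or via Catlin's weights in general. This should give
\[
  K_\om(z,z)\ge c\,K_D(z,z),\qquad
  \ddbar\log K_\om \ge c\,\ddbar\log K_D
\]
on $V\cap\om$ for a smaller neighborhood $V$.

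The key step is to justify that the comparisons survive without boundedness of $\om$. Positivity of $K_\om(z,z)$ is an extremal problem: one needs an $L^2$ holomorphic function on $\om$ that is nonzero at $z$. I would construct it by taking a local function on $D$ with large value at $z$ and small $L^2$ norm, cutting it off, and solving $\bar\partial$ on all of $\om$. The $L^2$ estimate is done with a weight $\varphi=(n+\epsilon)\log|\zeta-z|+A\,\lambda(\zeta)$, where $\lambda$ is a bounded psh function that is very negative near $p$. This weight forces vanishing of the correction at $z$. Pseudoconvexity of $\om$, which need not be bounded, suffices for H\"ormander's estimate as long as the weight is psh. Boundedness of $\om$ is therefore used nowhere, and we may add $\epsilon|\zeta|^2$-type adjustments only locally, not globally. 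The same construction, applied to the extremal problem for $\partial_X\log K$, i.e. functions vanishing at $z$ with prescribed derivative $X$, gives the lower bound on the Hessian via the standard Bergman extremal characterization. That characterization is
\[
  \ddbar\log K(X,\bar X)=\frac{\sup\{|Xf(z)|^2: f(z)=0,\ \|f\|\le1\}}{K(z,z)}.
\]

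Finally, for $D$ itself, I would use known finite-type estimates for bounded pseudoconvex domains: Catlin's estimates in $\C^2$, or McNeal's/Catlin's lower bounds for Bergman kernel and metric near finite-type points, which give $K_D>0$ and $\ddbar\log K_D>0$ there. Alternatively, I would simply note that for bounded $D$ both quantities are positive everywhere, so only the localization inequalities are needed. The main obstacle I expect is the uniformity of the localization constants as $z\to p$, especially for the Hessian. This requires the upper bound $K_\om\le C K_D$ together with the extremal lower bound, and it requires a psh local peak weight with controlled $\ddbar$. At finite type in higher dimensions I would try Catlin's bounded psh functions with large Hessian from his subellipticity work first.
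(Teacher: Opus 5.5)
The paper does not prove this proposition; it quotes it from the joint work with Sha and Xiao, so your argument has to stand on its own. Its skeleton is sound. With $D=\om\cap B$ bounded, $K_\om\le K_D$ is automatic, and $K_D>0$, $\ddbar\log K_D>0$. The two Bergman extremal problems then reduce everything to transplanting extremal functions from $D$ to $\om$ with $L^2$ control.

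The gap is exactly where you assert that boundedness of $\om$ is used nowhere. On a pseudoconvex, possibly unbounded $\om$, H\"ormander's theorem gives only $\int_\om|u|^2e^{-\varphi}\le\int_\om|f|^2_{\ddbar\varphi}e^{-\varphi}$. To conclude $F=\chi g-u\in A^2(\om)$ you need $e^{-\varphi}\ge c>0$ on all of $\om$, i.e.\ $\varphi$ bounded above on $\om$. Your weight $(n+\epsilon)\log|\zeta-z|+A\lambda$ with $\lambda$ bounded tends to $+\infty$ at infinity. So $u$ is controlled only against a weight decaying like $|\zeta|^{-(n+\epsilon)}$, and $F$ need not be square integrable.

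This is not a technicality. The same scheme, with any psh weight of logarithmic growth that supplies the needed positivity (e.g.\ adding $\log(1+|\zeta|^2)$), runs verbatim on $\om=\C^{n-1}\times\mathbb D$, where $A^2(\om)=\{0\}$. Relatedly, the estimate needs $\ddbar\varphi>0$ on $\operatorname{supp}\bar\partial\chi\cap\om$, which reaches into the interior. But $\log|\zeta-z|$ is radially degenerate, and Catlin-type weights are strongly psh only in a thin boundary layer. Adding $\epsilon|\zeta|^2$ ``only locally'' destroys global plurisubharmonicity, and adding it globally makes $\varphi$ unbounded again.

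What is missing is a weight that is psh on all of $\om$, bounded above, and strictly psh near $p$; this is where finite type must enter. Take the local psh peak function $\lambda$ at $p$ that you already invoke: continuous on $\overline\om\cap U$, $\lambda\le 0$, $\lambda(p)=0$, and $\lambda\le-\delta$ on $\om\cap U$ near $\partial U$. Set $\psi=\max\{\epsilon|\zeta-p|^2+B\lambda,\,-B\delta/2\}$ on $U\cap\om$ and $\psi=-B\delta/2$ on $\om\setminus U$. For $B$ large, $\psi$ is psh and bounded on $\om$. On $B(p,r_0)\cap\om$, where $\lambda>-\delta/2$, it equals $\epsilon|\zeta-p|^2+B\lambda$ and is therefore strictly psh.

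Now take $\operatorname{supp}\chi\subset B(p,r_0)$ and $D=\om\cap B(p,r_0)$, let $z$ be close to $p$, and cut off the pole:
$$\varphi=2n\,\theta\log|\zeta-z|+A\psi,$$
where $\theta\equiv1$ near $z$ and $\chi\equiv1$ on a neighborhood of $\operatorname{supp}\theta$. For the metric use $2n+2$ in place of $2n$, so that $u(z)=0$ and $du(z)=0$. For $A$ large, $\varphi$ is psh on $\om$, bounded above, and satisfies $\ddbar\varphi\ge A\epsilon\,\ddbar|\zeta|^2$ on $\operatorname{supp}\bar\partial\chi$. The $L^2$ estimate, in the form requiring positivity only on the support of $f$ (valid on unbounded pseudoconvex domains), then gives $u\in L^2(\om)$. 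Hence $K_\om(z,z)\ge cK_D(z,z)>0$. The extremal formula together with $K_\om\le K_D$ then gives $\ddbar\log K_\om\ge c\,\ddbar\log K_D>0$.

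Two smaller points. With the $e^{-\varphi}$ normalization, the coefficient $n+\epsilon$ does not force $u(z)=0$, since $|\zeta-z|^{-(n+\epsilon)}$ is integrable near $z$ in $\C^n$. And no uniformity of constants as $z\to p$ is needed for the pointwise positivity being claimed.
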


\begin{proof}[Proof of Theorem~\ref{thm:unbounded-extension}] Choose a smooth strongly pseudoconvex open patch $M\subset\partial\Omega$ containing $p_0$, and a relatively compact subpatch $p_0\in M_0\subset M$.
Since every point of the strongly pseudoconvex patch $M$ has finite
D'Angelo type, Proposition~\ref{prop:unbounded-finite-type-nondegeneracy}
shows that the Bergman metric is well-defined on a one-sided neighborhood
of each point of $M$.  Shrinking $M_0$ if necessary,
we fix a connected one-sided collar on which
\[
        K_\om(z,z)>0,
        \qquad
        \gB>0.
\]
The localization theorem of Hsiao--Huang--Li \cite[Theorem~1.3]{HHL2026} applies to possibly
unbounded pseudoconvex domains. Near the strongly pseudoconvex patch,
it gives a bounded smoothly bounded strongly pseudoconvex domain $D$
with the same boundary germ and with $K_\om-K_D$ smooth up to that
patch. Hence $K_\om$ has the local Fefferman expansion used above.
Together with local nondegeneracy, this is all that is needed for
Propositions~\ref{prop:local-boundary-asymptotics},
\ref{prop:infinite-vanishing}, and \ref{prop:uci}. Their differential
equations and Carleman estimates concern $g_B$ on the collar;
boundedness is used only for the auxiliary domain $D$.

Suppose first that $\Scal(\gB)$ is constant on the collar.
Lemma~\ref{lem:scalar-pde} gives
\[
        \Scal(\gB)=-n,
        \qquad
        \Delta_{\gB}u=0,
        \qquad
        u:=\log\frac{\cB}{\Cn}.
\]
Proposition~\ref{prop:infinite-vanishing} and
Proposition~\ref{prop:uci} then imply, after shrinking to a nonempty
subcollar, that
\[
        u=0.
\]
Hence
\begin{equation}\label{eq:unbounded-local-B-constant}
        \cB=\Cn
\end{equation}
on a nonempty open subset of $\om$.

Now suppose instead that $\gB$ is extremal on the collar.
By Corollary~\ref{cor:X-zero-collar}, after shrinking the collar,
\[
        \mathcal X
        :=
        \nabla_{\gB}^{1,0}\Scal(\gB)
        =0.
\]
Since $\gB$ is positive definite there,
\[
        \bar\partial\Scal(\gB)=0.
\]
The scalar curvature is real-valued, so it is constant on the connected
collar.  Proposition~\ref{prop:local-boundary-asymptotics} then shows that
this constant is $-n$.  We are therefore reduced to the preceding cscK
case and again obtain \eqref{eq:unbounded-local-B-constant} on a nonempty
open subset.

It remains to propagate this local identity without assuming that
$K_\om$ is everywhere positive.  Write
\[
        K:=K_\om(z,z)
\]
on $\om$, and define the Hermitian matrix
\[
        H_{\alpha\bar\beta}
        :=
        K K_{\alpha\bar\beta}
        -
        K_\alpha K_{\bar\beta}.
\]
Where $K>0$,
\[
        (\gB)_{\alpha\bar\beta}
        =
        \frac{H_{\alpha\bar\beta}}{K^2}.
\]
Consequently,
\[
        G
        =
        \det\bigl((\gB)_{\alpha\bar\beta}\bigr)
        =
        \frac{\det(H_{\alpha\bar\beta})}{K^{2n}},
\]
and hence
\begin{equation}\label{eq:unbounded-cross-multiplied}
        \cB=\Cn
        \quad\Longleftrightarrow\quad
        \det(H_{\alpha\bar\beta})
        =
        \Cn K^{2n+1}.
\end{equation}

The Bergman kernel is holomorphic in its first variable and
antiholomorphic in its second variable.  Thus its diagonal restriction
$K_\om(z,z)$ is real analytic on $\om$, and so is
\[
        F
        :=
        \det(H_{\alpha\bar\beta})
        -
        \Cn K^{2n+1}.
\]
By \eqref{eq:unbounded-local-B-constant} and
\eqref{eq:unbounded-cross-multiplied}, $F$ vanishes on a nonempty open
subset of the connected domain $\om$.  The identity theorem for
real-analytic functions therefore gives
\begin{equation}\label{eq:unbounded-global-cross-identity}
        \det(H_{\alpha\bar\beta})
        =
        \Cn K^{2n+1}
        \qquad\text{on all of }\om.
\end{equation}

We next show that the Bergman form is positive definite everywhere on
$\om_K=\{K>0\}$.  Let $\{\varphi_j\}$ be an orthonormal basis of
$A^2(\om)$. Local $L^2$ mean-value estimates and Cauchy estimates
give locally normal convergence of the kernel expansion and its
derivatives. For $z\in\om_K$ and
$X\in T_z^{1,0}\om$, the standard Bergman-kernel calculation gives
\[
\begin{aligned}
(\partial\bar\partial\log K)(X,\overline X)
&=
\frac{
K\sum_j|X\varphi_j|^2
-
\left|\sum_j(X\varphi_j)\overline{\varphi_j}\right|^2
}{K^2}\\
&\ge0
\end{aligned}
\]
by the Cauchy--Schwarz inequality.  Hence $\gB$ is positive semidefinite
on $\om_K$.

On the other hand, \eqref{eq:unbounded-global-cross-identity} yields,
for $z\in\om_K$,
\[
        G
        =
        \frac{\det(H_{\alpha\bar\beta})}{K^{2n}}
        =
        \Cn K
        >0.
\]
A positive-semidefinite Hermitian matrix with positive determinant is
positive definite.  Thus $\gB$ is well-defined on all of $\om_K$.
Moreover,
\[
        \cB=\frac{G}{K}=\Cn
        \qquad\text{on }\om_K.
\]
Using
\[
        \Ric(\gB)
        =
        -\gB-\ddbar\log\cB,
\]
we conclude that
\[
        \Ric(\gB)=-\gB
        \qquad\text{on }\om_K.
\]

If $K_\om(z,z)>0$ everywhere on $\om$, then
$\om_K=\om$, and the final assertion follows.
\end{proof}

\bibliographystyle{amsplain}
%\bibliography{ref_asympcsck}

\providecommand{\bysame}{\leavevmode\hbox to3em{\hrulefill}\thinspace}
\providecommand{\MR}{\relax\ifhmode\unskip\space\fi MR }
% \MRhref is called by the amsart/book/proc definition of \MR.
\providecommand{\MRhref}[2]{%
  \href{http://www.ams.org/mathscinet-getitem?mr=#1}{#2}
}
\providecommand{\href}[2]{#2}

\end{document}